\newtheorem{Lemma}{Lemma}
\newtheorem{Theorem}{Theorem}
\newtheorem{Corollary}{Corollary}
\newtheorem{Definition}{Definition}
\def\Ack{\medskip\noindent {\bf Acknowledgements:}\ \ignorespaces}
\def\signra{\bigskip \begin{center}
{\sc Ricardo Alonso\par\vspace{3mm}
Department of Mathematics, University of Texas at Austin \\
Austin, TX 78712, U.S.A.
\par\vspace{3mm}
e-mail:} \tt{ralonso@math.utexas.edu}  \end{center}}
\def\signig{\bigskip \begin{center}
{\sc Irene M. Gamba\par\vspace{3mm}
Department of Mathematics and ICES, University of Texas at Austin \\
Austin, TX 78712, U.S.A.
\par\vspace{3mm}
e-mail:} \tt{gamba@math.utexas.edu}  \end{center}}
\begin{document}
\title[$L^{1}-L^{\infty}$ Maxwellian weighted bounds for
derivatives of any order]{Propagation of $L^{1}$ and $L^{\infty}$
Maxwellian weighted bounds for derivatives of solutions to the
homogeneous elastic Boltzmann Equation}
\author{Ricardo J. Alonso and Irene M. Gamba}
%
%
%
%
\begin{abstract}
We consider the $n$-dimensional space homogeneous Boltzmann equation
for elastic collisions for variable hard potentials with Grad
(angular) cutoff. We prove sharp moment inequalities, the
propagation of $L^1$-Maxwellian weighted estimates, and
consequently, the propagation $L^\infty$-Maxwellian weighted
estimates to all derivatives of the initial value problem associated
to the afore mentioned problem.

More specifically, we extend to all derivatives of the initial value
problem associated to this class of Boltzmann equations
corresponding  sharp moment (Povzner) inequalities and  time
propagation of $L^1$-Maxwellian weighted estimates  as originally
developed Bobylev \cite{Bobylev1} in the case of hard spheres in $3$
dimensions; an improved  sharp moments inequalities to a larger
class of angular cross sections and  $L^1$-exponential bounds  in
the case of stationary states to Boltzmann equations for inelastic
interaction problems with `heating' sources, by
Bobylev-Gamba-Panferov \cite{Bobylev}, where high energy tail decay
rates depend on the inelasticity coefficient and the the type of
`heating' source; and more recently, extended to variable hard
potentials with angular cutoff by Gamba-Panferov-Villani
\cite{Gamba} in the elastic case collision case and so
$L^1$-Maxwellian weighted estimated were shown to propagate if
initial states have such property. In addition, we also extend to
all derivatives the  propagation of $L^\infty$-Maxwellian weighted
estimates, proven in  \cite{Gamba}, to solutions of the initial
value problem to the Boltzmann equations for elastic collisions for
variable hard potentials with Grad (angular) cutoff.
\end{abstract}
\maketitle
%

\section{Introduction}

The study of propagation of $L^1$-Maxwellian weighted estimates to
solutions of the initial value for the $n$-dimensional space
homogeneous Boltzmann equation for elastic collisions for variable
hard potentials with Grad (angular) cutoff
 entices the study of summability properties of a corresponding
 series of the solution moments to all orders. This problem was
 addressed for the first time by Bobylev in \cite{Bobylev1} in the case of $3$
 dimension for the hard sphere problem, i.e. for constant angular cross section in
 the collision kernel.

 Previously,  the behavior of time propagating properties for the
moments of the solution to the initial value problem for the elastic
Boltzmann transport equation, in the space homogeneous regime for
hard spheres and variable hard potentials and  integrable angular
cross sections (Grad cutoff assumption) had been extensively
studied, but not their summability properties. In fact, the study of
Povzner estimates and propagation of moments of the solution to the
of variable hard potentials with Grad cutoff assumption, was
progressively understood in the work of Desvillettes
\cite{Desvillettes} and Wennberg \cite{Wennberg}, where the Povzner
estimates, a crucial tool for the moments control in the case of
variable hard spheres with the Grad cutoff assumption, where based
on pointwise estimates on the difference between pre and
post-collisional velocities of convex, isotropic weights functions
of the velocity in oreder to control their integral on the $n-1$
dimensional sphere, and consequently, and not sharp enough to obtain
summability of moments.

A  significant leap was developed by Bobylev \cite{Bobylev1} where
the first proof of summability properties of moments was
established, in the case of hard spheres in $3$ dimensions, showing
that $L^1$-Maxwellian weighted estimates propagates if the initial
data is in within that class. Among several new crucial techniques
that were developed in that fundamental paper, the is a significant
improvement of the Pozvner estimates based on the averaging
(integrals) on the $n-1$ dimensional sphere of convex, isotropic
weights functions of the velocity, for the case of variable hard
potentials with the Grad cutoff assumption. As a consequence it is
possible to established that, in the case of three dimensions
velocity, for hard spheres, the moments of the gain operator will
decay proportional to the order of the moment with respect to the
loss term uniformly in time, by means of  infinity  evolution
inequalities in terms of moments.  That key estimate yields
summability of a moments series, uniformly in time. Later, Bobylev,
Gamba and Panferov  \cite{Bobylev}, establish the sharpest version
of the Povzner inequality for elastic or  inelastic collisions,
using the approach of \cite{Bobylev1}, by a somehow reduced
argument, under the conditions that both the convex, isotropic
weights functions of the velocity and the angular part of the
angular cross section are non-decreasing. The two main ideas are to
pass to the center of mass – relative velocity variables and to use
the angular integration in to obtain more precise constants in the
corresponding inequalities. The summability property, which in the
work of Bobylev  \cite{Bobylev}, was done only for hard spheres in
three dimensions whose the angular cross section is constant,  was
extended, in \cite{Bobylev1}, to the case of bounded angular
section. In addition, the problem of stationary states to Boltzmann
equations for inelastic interaction problems with `heating' sources,
such as random heat bath, shear flow or self-similar transformed
problems, was addressed in \cite{Bobylev}, where $L^1$-exponential
bounds with decay rates depending of the inelasticity coefficient
and the the type of `heating' source were shown as well. In these
cases the authors showed $L^1$-exponential weighted decay bounds
with slower decay than  Maxwellian (i.e. $s<2$).

 In an interesting application
of these moments summability formulas, estimates and techniques,
Mouhot \cite{mohout06}, was able to establish (for the elastic case)
a result that proves the instantaneous `generation', of
$L^1$-exponential bounds uniformly in time, with only
$L^1_2\cap\L^2$ initial data, where the exponential rate is half of
the variable hard sphere exponent, under the same assumptions on the
angular function as in \cite{Bobylev}.

However, still for the elastic case, of variable hard potentials and
Grad cutoff assumption, neither \cite{Bobylev1} nor \cite{Bobylev},
addressed the propagation of $L^1$-Maxwellian weighted bounds,
uniform in time to solutions of the corresponding initial value
problem in $n$-dimensions with more realistic intramolecular
potentials. In a recent manuscript by Gamba, Panferov and Villani
\cite{Gamba}, showed  the $L^1$-Maxwellian weighted propagation
estimates and the provided a proof to the open problem of
propagation of $L^\infty$-Maxwellian weighted bounds, uniformly in
time. The Grad cutoff assumption was still assumed (integrability of
the angular part of the collision kernel) without the boundedness
condition, but a growth rate assumption on the angular singularity,
depending only on the velocity space dimension, that still keeps
integrability. The propagation of $L^\infty$-Maxwellian weighted
bounds combines the classical Carleman representation of the gain
operator with the $L^1$-Maxwellian bounds.

More specifically, the behavior for large velocities is commonly
called "high energy tails".  Under precise conditions, described in
\cite{Bobylev1} and \cite{Gamba}), it is known that for a solution
this asymptotic behavior is comparable in some way to
$\exp(-r|\xi|^{s})$ with $r,s$ positive numbers.  In   the case of
elastic interactions, it is known that $s=2$, provided the initial
state also has that behavior, i.e. it decays as a Maxwellian. This
is a revealing fact that says that a solution of the elastic initial
value problem for the $n$-dimensional Boltzmann equation, with
variable hard potential kernels  and singular integrable angular
cross section, decays like a Maxwellian for all times as long as the
initial state does it
as well.

\bigskip
In this present manuscript, we extend the results of \cite{Gamba} to
show both propagation of of $L^1$-Maxwellian and
$L^\infty$-Maxwellian weighted estimates to all derivatives of the
solution to the initial value problem to the space homogeneous
Boltzmann equations for elastic collisions for variable hard
potentials with an integrable angular singularity condition as in
\cite{Gamba}.

We first note that sharp Povzner inequalities
(\cite{Bobylev1}-\cite{Bobylev}-\cite{Gamba}) are, indeed,  the main
tool for the study of the solution's moments for the variable hard
potential models.  They control the decay of the moments of the gain
collision operator with respect to the moments of the loss collision
operator. This technique yields a control of the time derivative of
any higher order moment using the lower order ones. In particular,
one uses  the Boltzmann equation, in order  to build an infinite
system of sharp Povzner inequalities for each moments which can be
used, by arguing inductively, to control each moment uniformly in
time. As a result one obtains $L^1$-Maxwellian weighted estimates
and the corresponding $L^\infty$-Maxwellian weighted estimates in
the elastic interaction models in $n$-dimensions and  for variable
hard potential collision kernels with an integrable angular
singularity condition depending on the dimension $n$.

  Here we show that these results extend to the study of propagation of
$L^1$-Maxwellian  and $L^\infty$-Maxwellian weighted estimates to
any high order  derivatives of the solution to the $n$-dimensional
elastic Boltzmann equation for variable hard potentials.  In
particular, these bounds imply that if the initial derivatives of
the solution are controlled pointwise by the derivatives of a
Maxwellian
then this control propagates for all times.

The paper is organized as follows. After this introduction, section
2 presents the problem and the main Theorem~\ref{T3}. Section 3 focus
in finding sharp Povzner inequalities for the solution's derivatives.
All the computations regarding the derivatives of the collision operator
and the action of the differential collision operator on test functions
are presented in Lemmas~\ref{L1}, \ref{L2} and \ref{L3}.  Lemmas~\ref{L4},
\ref{L5}, \ref{L6} and \ref{L7} are
devoted to provide a suitable expression ready to use for the construction
of the mentioned system of inequalities on the derivative's moments.
In Lemma~\ref{L8} such a system of inequalities is presented.

Then in section 4, all previous results used to obtain information
for the solution's derivatives in the elastic case. Theorem~\ref{T1}
proves the control of moment's growth, and Theorem~\ref{T2} uses
Lemma~\ref{L8} to obtain a global in time bound for the derivative's
moments in the elastic case yielding the $L^1$-Maxwellian bounds to
derivatives of any order. Finally in section 5, we show that uniform
bounds on the moments of these derivatives lead to a pointwise
estimate. This is possible after using an $L^{\infty}-L^{1}$
Maxwellian weighted control on the gain collision operator as shown
in  \cite{Gamba} (see Theorems~\ref{T5} and \ref{T6} in the Appendix
A).  The Boltzmann equation and this control are sufficient to find
a time uniform pointwise control by Maxwellians for the solution's
derivatives of any order. Calculations of this fact are performed in
Theorem~\ref{T3}, where the $L^{\infty}$-Maxwellian bound is shown.
\section{Preliminaries and Main Result}
This section presents the assumptions and notation used along the paper.  Assume that the
function $0\leq f(\xi,t)$ with $(\xi,t)\in\mathbb{R}^{n}\times\mathbb{R}^{+}$ solves
the homogeneous Boltzmann problem
\begin{equation}
\frac{\partial{f}}{\partial{t}}=Q(f,f)\;\;\mathrm{on}\;(0,T)\times\mathbb{R}^{n},\;\;\;
\;f(\xi,0)=f_{0},
\label{e1}
\end{equation}
where $Q(f,f):=Q^{+}(f,f)-Q^{-}(f,f)$ is the standard Boltzmann collision operator for
variable hard potentials.  It is defined for any two measurable functions $f$ and $g$
by the formula
\begin{equation}
Q(f,g)=\int_{\mathbb{R}^{n}}\int_{S^{n-1}}\left(f'g'_{*}-fg_{*}\right)B(\xi-\xi_{*},\sigma)
d\sigma d\xi_{*}.
\label{e2}
\end{equation}
In particular,
\begin{equation}
Q^{+}(f,g)=\int_{\mathbb{R}^{n}}\int_{S^{n-1}}f'g'_{*}B(\xi-\xi_{*},\sigma)d\sigma d\xi_{*}
\label{e2.1}
\end{equation}
and,
\begin{equation}
Q^{-}(f,g)=\int_{\mathbb{R}^{n}}\int_{S^{n-1}}fg_{*}B(\xi-\xi_{*},\sigma)d\sigma
d\xi_{*} \, . \label{e2.2}
\end{equation}
\\
The classical notation $'\!f$, $'\!f_{*}$, $f'$ and $f'_{*}$ is
adopted to imply that the distributional function $f$ has the
pre-collision velocity arguments $'\!\xi$, $'\!\xi_{*}$ or the post
collision velocity arguments $\xi'$, $\xi'_{*}$.  Recall that the
dependence of post and pre-collision velocities is given by the
formulas
\begin{displaymath}
\xi'=\xi+\frac{1}{2}(|u|\sigma-u)),\;\;\;
\xi'_{*}=\xi_{*}-\frac{1}{2}(|u|\sigma-u))
\end{displaymath}
where $\sigma\in S^{n-1}$ is a vector describing the geometry of the
collisions, see for example \cite{Villani2}, for a complete
description.\\
\\
Intramolecular potentials are modeled by the collision  kernel as a
non negative function given by
\begin{displaymath}\label{collcross}
B(\xi-\xi_{*},\sigma)=|\xi-\xi_{*}|^{\alpha}\,
h(\hat{u}\cdot{\sigma}) \qquad\ \ \mathrm{and} \ \ \ \hat u =
\frac{\xi-\xi_{*}}{|\xi-\xi_{*}|}
\end{displaymath}
with $\alpha\in(0,1]$ and $\hat u$ is the renormalized relative
velocity. It is assumed that the angular cross section $h(\cdot)$
has the following properties
\begin{itemize}
\item[\it(i)] \ \ \ $h(z)\geq 0$ is nonnegative on $(-1,1)$ such that
\begin{displaymath}
h(z)+h(-z)\mbox{ is nondecreasing on }(0,1)
\end{displaymath}
\item[\it(ii)]
\begin{displaymath}
0\leq h(z)(1-z^{2})^{\mu/2}\leq C\mbox{ for }z\in(-1,1)
\end{displaymath}
\ \ \ where $\mu<n-1$ and $C>0$ constant.
\end{itemize}

\noindent Note that assumption {\textit{(i)}} implies  that
$h(\hat{u}\cdot\sigma)\in L^{1} (S^{n-1})$.  For convenience we
normalized its mass as follows
\begin{displaymath}
\int_{S^{n-1}}h(\hat u\cdot\sigma)d\sigma=\omega_{n-2}\int^{1}_{-1}h(z)(1-z^{2})^{\frac{n-3}{2}}dz=1
\end{displaymath}
where $\omega_{n-2}$ is measure of the $n-2$ dimensional sphere.
\\
In the case of  three dimensional collisional models for variable
hard potential,  condition \textit{(ii)} simplifies to
\begin{equation}
 \int^{1}_{-1}h(z)dz=1/2\pi.
\end{equation}
usually referred as the {\sl Grad cutoff assumption}.  With these
assumptions the collision model kernel used still falls in the
category of variable hard potential with angular cut-off. The reader
may go to \cite{Gamba} for a recent, complete discussion on the
behavior of the moments of the solution for variable hard potential
with cut-off in any dimension.\\
\\
The standard integrability conditions on the initial datum $f_{0}$
are assumed to be
\begin{displaymath}
\int_{\mathbb{R}^{n}}f_{0}d\xi=1,\;\;\;\;\;\int_{\mathbb{R}^{n}}f_{0}\xi
d\xi=0, \;\;\;\;\;\;\int_{\mathbb{R}^{n}}f_{0}|\xi|^2d\xi<\infty.
\end{displaymath}
In other words, $f_{0}$ has finite mass, which is normalized to one
for convenience,  and finite energy.  These conditions can be
addressed in a compact manner using the weighted Lebesgue space
$L^{p}_{k}$ with $p\geq1$ and $k\in\mathbb{R}$, defined by the norm
\begin{displaymath}
\left\|f\right\|_{L^{p}_{k}(\mathbb{R}^{n})}=\left(\int_{\mathbb{R}^{n}}
|f|^{p}(1+|\xi|^{2})^{pk/2}d\xi\right)^{1/p}.
\end{displaymath}
In particular the initial datum  can be referred as $f_{0}\in
L^{1}_{2}$.
\\
 Following these ideas, the weighted Sobolev spaces $W^{s,p}_{k}$, with $s\in N$,
 are used to work with the weak  derivatives of $f$.  These spaces are defined by the norm
\begin{displaymath}
\left\|f\right\|_{W^{s,p}_{k}(\mathbb{R}^{n})}=\left(\sum_{|\nu|\leq s}\left\|\partial^{\nu}
f\right\|^{p}_{L^{p}_{k}}\right)^{1/p}
\end{displaymath}
  where the symbol $\partial^{\eta}$ is understood as $\partial^{\eta}=
  \partial^{\eta_{1}}_{\partial \xi_{1}}\partial^{\eta_{2}}_{\xi_{2}}\cdots
  \partial^{\eta_{n}}_{\xi_{n}}$ for a multi-index $\eta$ of n dimensions.
  The usual notation is used for the Hilbert space $H^{s}_{k}\equiv W^{s,2}_{k}$.\\
  \\
Throughout the paper, the order of the multi-index is defined as
$|\eta|=~\sum_{1\leq i\leq n}\eta_{i}$,  in addition, the comparison
between multi-indexes is denoted as $\nu<\eta$ or $\nu\leq\eta$.  This is understood
as $\nu_{i}\leq\eta_{i}$ for all $1\leq i\leq n$ and $|\nu|<|\eta|$ or $|\nu|\leq|\eta|$
respectively.\\
Regarding the regularity of the initial datum, it is required that
$f_{0}\in W^{s,1}_{2}$ for some $s\geq 1$ to be chosen afterwards.
The additional assumption $f_{0}\in H^{s}$ is required for the final
result.
\begin{Definition}
Define for any sufficiently regular function $f$, multi-index $\eta$ and $p>0$
the \textsl{moment of order $p$ for the $\eta$ derivative of $f$} as the time dependent function
\begin{equation}
\delta^{\eta} m_{p}(t)\equiv\int_{\mathbb{R}^{n}}|\partial^{\eta} f||\xi|^{2p}d\xi.
\label{E3}
\end{equation}
\end{Definition}
This definition tries to generalize the classical definition for the
moments, $m_{p}(t)\equiv\int_{\mathbb{R}^{n}}f|\xi|^{2p}d\xi$,
however an absolute value is imposed in $\partial^{\eta} f$ since
this function in general does not have sign.  Observe that the
condition $f_{0}\in W^{s,1}_{2}$ is equivalent to say that
$\delta^{\nu}m_{0}(0)$ and $\delta^{\nu}m_{1}(0)$ are bounded for
$|\nu|\leq s$.\\
\\
The next definition is related to the exponential tail concept
introduced in \cite{Bobylev1} in the study of the solution's moments
of the elastic homogenous Boltzmann equation, and later in
\cite{Bobylev} in the study of large velocity tails for solutions of
the inelastic homogeneous Boltzmann equation with source terms.
\begin{Definition}
The function $f$ has \textit{an $L^{1}$ exponential (weighted) tail
of order $s>0$ in $[0,T]$} if
\begin{equation}
\bar{r}_{s}=\sup_{r>0}\left\{r:\sup_{\;0\leq t\leq
T}\left\|f\;\exp(r|\xi|^{s})\right\|_{L^{1}(\mathbb{R}^{n})}<+\infty\right\}
\label{De2}
\end{equation}
is positive.\\
In particular, for $s=2$ we simply say that $f$ has an
$L^{1}$-Maxwellian (weighted) bound.
\end{Definition}
This definition is equivalent to that one used in \cite{Bobylev1}
and \cite{Bobylev} for the solution of the homogeneous Boltzmann
equation.  Observe that the definition does not requires non
negativity in the function $f$.  This is important since the main
purpose of this paper is to study the derivatives of the solution of
problem (\ref{e1}), functions that do not have sign.\\
\\
We point out that Bobylev proved in \cite{Bobylev1} the propagation
$L^{1}$-Maxwellian tails for the hard sphere problem in three
dimentsions. That is, he showed the  existence of $L^{1}$
exponential tail of order $2$ for the solution of the homogeneous
Boltzmann equation provided the initial data has $L^{1}$ exponential
tail of order $2$ the case is special in the sense that the angular
cross section function $h(z)$ in \eqref{collcross} is constant.
Recently, this result was extended was in \cite{Gamba} under the
conditions {\textit(i-ii)} for the angular cross section, to further
show that the tail behavior is in fact `pointwise' for all times if
initially so. This fact motivates the following natural definition.
\begin{Definition}
The function $f$ has \textit{an $L^{\infty}$ exponential (weighted)
tail of order $s>0$ in $[0,T]$} if
\begin{equation}
\bar{r}_{s}=\sup_{r>0}\left\{r:\sup_{0\leq t\leq T}\left\|f\;\exp(r|\xi|^{s})\right\|_{L^{\infty}
(\mathbb{R}^{n})}\right\}<\infty
\label{De3}
\end{equation}
is positive.\\
In particular, for $s=2$ we simply say that $f$ has an
$L^{\infty}$-Maxwellian (weighted) bound.
\end{Definition}
As it was just mentioned above, the elastic, space homogeneous
Boltzmann equation for variable hard spheres (i.e. $\alpha\in(0,1]$
in equation \eqref{collcross}) it has been shown that the solution
has an $L^{\infty}$ exponential tail of order 2 in $[0,\infty)$.
This is a consequence of the rather strong fact that $L^{1}$
exponential tail implies the $L^{\infty}$ exponential tail in the
solution by means of a result like Theorem~\ref{T5} in the Appendix.
Another example of the strong relation of $L^{1}-L^{\infty}$
exponential tails is given in \cite{Mischler2}. In this work the
authors proved the existence of self-similar solutions for the
inelastic homogeneous Boltzmann equation with constant restitution
coefficient.  Using the results from \cite{Gamba}, where it was
shown proved that an steady state of a self- similar solution must
has all moments bounded and an $L^{1}$ exponential tail of order 1,
the authors went further to show the existence of such steady states
and  that also
it has an $L^{\infty}$ exponential tail of order 1.\\
\\
\bigskip
\\
We are ready to formulate the main result of this work after the
introduction of the short notation for the Maxwellian (i.e.
exponential of order 2) weight,
\begin{displaymath}
M_{r}\equiv M_{r}(\xi)=\exp(-r|\xi|^{2})\;\;\mbox{with}\;\;r\in\mathbb{R}.
\end{displaymath}
\begin{Theorem}\label{T3}
Let $\eta$ any multi-index and assume that $f_{0}\in
H^{|\eta|}_{(|\eta|-1)(1+\alpha/2)}$. In addition, assume that for
all $\nu\leq\eta$ we have that $|\partial^{\nu}f_{0}|/M_{r_{0}}\in
L^{1}$ and
$|\partial^{\nu}f_{0}|/\left\{(1+|\xi|^{2})^{|\nu|/2}M_{r_{0}}\right\}\in
L^{\infty}$ for some $r_{0}>0$.  Then, there exist $r\leq r_{0}$
such that
\begin{displaymath} \sup_{\;t\geq
0}\frac{|\partial^{\nu}f|}{(1+|\xi|^{2})^{|\nu|/2}M_{r}}\leq
K_{\eta,r_{0}}
\end{displaymath}
for all $\nu\leq\eta$, where $K_{\eta,r_{0}}$ is a positive constant
depending on $\eta$, $r_{0}$ and the kernel $h(\cdot)$.  In
particular, for $\nu\leq\eta$ and $t>0$
\begin{displaymath}
\lim_{|\xi|\rightarrow\infty}|\partial^{\nu} f(\xi,t)|\leq
K_{\eta,r_{0}}
\lim_{|\xi|\rightarrow\infty}M_{\bar{r}^{\nu}_{2}}(\xi)
\end{displaymath}
where the constant $\bar{r}^{\nu}_{2}$ is given by (\ref{De3}) for
the function $\partial^{\nu}f$.
\end{Theorem}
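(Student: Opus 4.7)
The plan is to argue by induction on $|\eta|$, using a Duhamel representation for $\partial^{\eta}f$ that splits off the collision-frequency loss term as a damping factor, and then closing the bound with the $L^{1}$-Maxwellian propagation for derivatives furnished by Theorem~\ref{T2} together with the $L^{\infty}$--$L^{1}$ gain operator control stated in Theorems~\ref{T5} and~\ref{T6} of Appendix~A. The base case $|\eta|=0$ is precisely the $L^{\infty}$-Maxwellian propagation for $f$ established in~\cite{Gamba}. For the inductive step I would differentiate equation~(\ref{e1}) and apply Leibniz to obtain
\[
\partial_{t}(\partial^{\eta}f)\;=\;\sum_{\nu\leq\eta}\binom{\eta}{\nu}\Bigl[Q^{+}(\partial^{\nu}f,\partial^{\eta-\nu}f)-Q^{-}(\partial^{\nu}f,\partial^{\eta-\nu}f)\Bigr],
\]
isolating the loss contribution with $\nu=0$ and $\eta-\nu=\eta$, which yields a damping $\nu_{f}(\xi)\,\partial^{\eta}f$ with $\nu_{f}(\xi)=\int f_{*}|\xi-\xi_{*}|^{\alpha}d\xi_{*}\gtrsim (1+|\xi|^{2})^{\alpha/2}$, and lumping all remaining contributions into a forcing $G_{\eta}$.

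Then the Duhamel formula
\[
\partial^{\eta}f(\xi,t)\;=\;e^{-\int_{0}^{t}\nu_{f}\,d\tau}\,\partial^{\eta}f_{0}(\xi)\;+\;\int_{0}^{t}e^{-\int_{s}^{t}\nu_{f}\,d\tau}\,G_{\eta}(\xi,s)\,ds
\]
reduces the problem to a Maxwellian-weighted $L^{\infty}$ estimate on $G_{\eta}$, with an extra decay factor $(1+|\xi|^{2})^{-\alpha/2}$ harvested from the damping exponential. The intermediate terms $Q^{\pm}(\partial^{\nu}f,\partial^{\eta-\nu}f)$ with $0<\nu<\eta$ are controlled by the inductive hypothesis, which supplies pointwise Maxwellian bounds on both factors with polynomial prefactors $(1+|\xi|^{2})^{|\nu|/2}$ and $(1+|\xi|^{2})^{|\eta-\nu|/2}$; these pass through $Q^{+}$ via Theorem~\ref{T5} applied with two $L^{\infty}$ inputs, and through $Q^{-}$ by direct integration against $\nu_{f}$. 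The truly borderline terms are those with $\nu=\eta$ or $\nu=0$ in the gain sum, where one factor is $\partial^{\eta}f$ itself: for these only the $L^{1}$-Maxwellian bound produced by Theorem~\ref{T2} is available on the derivative, while the companion factor $f$ carries the pointwise Maxwellian bound of~\cite{Gamba}. This is exactly the $L^{\infty}$--$L^{1}$ configuration for which Theorems~\ref{T5}--\ref{T6} were designed, and their application upgrades $\partial^{\eta}f$ to an $L^{\infty}$-Maxwellian estimate with polynomial prefactor of order at most $(1+|\xi|^{2})^{|\eta|/2}$, the extra factor $(1+|\xi|^{2})^{1/2}$ per derivative matching what one gets by formally differentiating a Maxwellian. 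The hypothesis $f_{0}\in H^{|\eta|}_{(|\eta|-1)(1+\alpha/2)}$ enters exactly to guarantee the moment and Sobolev assumptions under which Theorem~\ref{T2} delivers the $L^{1}$-Maxwellian control on $\partial^{\eta}f$.

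The main obstacle I anticipate is bookkeeping of the Maxwellian rate $r$: each invocation of the Appendix~A estimates forces a small loss $r\mapsto r'<r$, and the induction compounds this loss $|\eta|$ times, so $r\leq r_{0}$ must be chosen small enough at the start to absorb all $|\eta|$ reductions while still majorizing the initial datum term $e^{-\int_{0}^{t}\nu_{f}}\partial^{\eta}f_{0}$ by $K\,(1+|\xi|^{2})^{|\eta|/2}M_{r}$. The polynomial weight $(1+|\xi|^{2})^{|\eta|/2}$ is recovered by balancing the algebraic gain from the factor $|\xi-\xi_{*}|^{\alpha}$ in $Q^{+}$, the polynomial prefactors inherited from the inductive bounds on lower-order derivatives, and the decay $1/\nu_{f}(\xi)\lesssim (1+|\xi|^{2})^{-\alpha/2}$ extracted from the damping exponential. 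The pointwise limit statement at the end of the theorem then follows immediately from the uniform bound: as $|\xi|\to\infty$ the polynomial prefactor is absorbed into any Maxwellian of slightly smaller rate, which is precisely the rate $\bar r^{\nu}_{2}$ defined by~(\ref{De3}) for $\partial^{\nu}f$.
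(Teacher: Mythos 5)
Your plan follows essentially the same route as the paper's proof: induction on $|\eta|$ with the base case taken from \cite{Gamba} (Theorem~\ref{T4}, item 2), differentiation of (\ref{e1}) via Lemma~\ref{L1}, retention of the loss term $\partial^{\eta}f\,L(f)$ as a damping term, control of the mixed terms $0<\nu<\eta$ by the inductive pointwise bounds, and control of the terms containing $\partial^{\eta}f$ itself by feeding the $L^{1}$-Maxwellian bound of Theorem~\ref{T2} into the $L^{\infty}$--$L^{1}$ gain estimates of Theorems~\ref{T5} and \ref{T6}, with the polynomial prefactor $(1+|\xi|^{2})^{|\nu|/2}$ and the loss of rate $r\leq r_{0}$ bookkept as in the paper. (A small indexing slip: the damping corresponds to $\nu=\eta$, $\eta-\nu=0$ in the loss expansion, i.e. $Q^{-}(\partial^{\eta}f,f)=\partial^{\eta}f\,L(f)$; the term with $\nu=0$ is $f\,L(\partial^{\eta}f)$ and belongs to the forcing, where it is estimated by $L(|\partial^{\eta}f|)\leq C\,\|\partial^{\eta}f/M_{r}\|_{L^{1}}(1+|\xi|^{2})^{\alpha/2}$.)

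The one step where your Duhamel variant needs more care than you give it is the uniform-in-time use of the damping. To conclude a bound valid for all $t\geq 0$ from $\int_{0}^{t}e^{-\int_{s}^{t}\nu_{f}\,d\tau}\,ds\leq \nu_{f}^{-1}$ you need $\nu_{f}=L(f)$ bounded below by a strictly positive quantity uniformly in $t\geq 0$ and in $\xi$, including small $|\xi|$; the Arkeryd-type bound (\ref{e14.8}) gives only $\nu_{f}\geq k_{\alpha}|\xi|^{\alpha}$, which degenerates at $\xi=0$, and the strengthening to $\inf_{\xi}L(f)>0$ used in the paper comes from the Pulvirenti--Wennberg Maxwellian lower bound (Theorem~\ref{T4}, item 3), which is available only for $t\geq t_{0}>0$. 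The paper therefore splits the argument: on $[0,t_{0}]$ it integrates the differential inequality dropping the damping altogether (the forcing being bounded by $K(1+|\xi|^{2})^{|\eta|/2}$ and the time interval finite), and only for $t\geq t_{0}$ does it exploit the damping through an ODE comparison. Your argument closes once you insert this same small-time/large-time split (or otherwise justify a uniform positive lower bound on $\nu_{f}$ down to $t=0$). Note also that the extra decay $(1+|\xi|^{2})^{-\alpha/2}$ you propose to harvest from the damping is not actually needed: Theorems~\ref{T5} and \ref{T6} already absorb the kernel growth $|\xi-\xi_{*}|^{\alpha}$, and the loss contributions stay within the allowed polynomial weight by the estimate on $L(|\partial^{\nu}f|)$ above.
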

$Remarks:$
\begin{itemize}
\item In other words, if $\partial^{\nu}f_{0}$ has a $L^{\infty}$ exponential tail of order 2
for all $\nu\leq\eta$, then the $\nu$ derivative of the solution
will propagate such behavior, that is the $\partial^{\nu}f(t,v)$
still has an $L^{1}$ exponential tail of order 2. In addition, using
related arguments to the ones in \cite{Gamba}, yields the
propagation of $L^{\infty}$ exponential tails of order 2 for  the
$\nu$ derivative of the solution, for all $\nu\leq\eta$.

\item It is clear that the property of having $L^{1}$ or $L^{\infty}$ exponential tail is
transparent to the polynomial weight that we include.  Indeed, a
function has any of the previous properties if an only if the
product of the function with a polynomial also has the property.  We
include the weight in the statement of Theorem~\ref{T3} since it
appears naturally for  variable hard sphere kernels with an angular
cross section function $h(z)$ satisfying {\textit{(i)-(ii)}},  as
the proof of the Theorem shows. In addition, emphasis has been done
about the fact that the $\nu$ derivative of the solution is being
compared with the $\nu$ derivative of the Maxwellian.
\end{itemize}
\medskip
 As it was noticed in  \cite{Bobylev1} and
\cite{Bobylev}, the existence of $L^{1}$ exponential tails for a
solution  $f$ of the space homogeneous Boltzmann equation, is
closely related to the existence of all its moments and its
summability properties. Following that line of work in the such of
properties of such nature for $\partial^{\eta} f$, we also observe
that
\begin {displaymath}\label{summ}
\int_{\mathbb{R}^{n}}|\partial^{\eta}
f|\;\exp{(r|\xi|^{s})d\xi=\sum_{k=0}^{\infty}\frac{\delta
^{\eta}m_{sk/2}}{k!}r^{k}}.
\end{displaymath}
Thus, in order to show that there is a the choice of $s>0$ for which
the summability of moments, or equivalently, a bound for the right
hand side of \eqref{summ} uniformly in time $t$, one would need to
show that there exist positive constants $K$ and $Q$, independent of
$t$, such that $\delta^{\eta} m_{sk/2}/k!<KQ^{k},\;\;k=1,2,3\cdots$.
Hence, if that is the case, the sum in the right hand side of
\eqref{summ} converges choosing, uniformly in time, for  any $r$
such that $0<r<1/Q$. This fact  will imply that the integral is
finite and therefore $\bar{r}^{\eta}_{s}>0$. \\
  Bobylev et al,
in \cite{Bobylev1}, and in \cite{Bobylev}, proved that under precise
conditions the moments of $f$ satisfy estimates
\begin{equation}
m_{sk/2}/k!<KQ^{k} \ \ \ \mathrm{uniformly\ in\ time\ for }\ s=2  \
.
\end{equation}
This paper intends to do the same for $\delta^{\eta} m_{sk/2}$ as defined in \eqref{E3}.\\
\\
Conversely, if the integral in the left hand side is bounded on
$[0,T]$ for some positive $r,s$, the terms in the sum must be
controlled in the form $\delta^{\eta}
m_{sk/2}/k!<K\;Q^{k},\;\;\;k=1,2,3\cdots$ for some constants
$K,Q>0$.  Thus, the moments $\delta^{\eta} m_{sk/2}$ with
$k=1,2,3,\cdots$ are uniformly bounded on $[0,T]$ if and only if
$\partial^{\eta}f$ has an $L^{1}$ exponential tail of some order
$s>0$ in $[0,T]$.
\\
Before continuing with the technical work the
reader may go to Appendix A. and see some of the classical results
known for a distributional solution $f$ of (\ref{e1}) used
throughout this work.
%
\section{Sharp Povzner-type inequalities for the solution's derivatives}
The purpose of this section is to give technical Lemmas regarding the derivative
of the collision operator $\partial^{\eta} Q(f,f)$.  The idea of the following Lemmas is to
obtain expressions for this operator as close as possible to those already given in
\cite{Bobylev} for $Q(f,f)$.
\begin{Lemma}\label{L1}
Let $f$ a sufficiently smooth function.  Then, the following expressions hold for the
positive and negative parts of the collision operator
\begin{displaymath}
\partial^{\eta} Q^{\pm}(f,f)=\sum_{\nu\leq\eta}\left(\begin{array}{c}\eta\\\nu\end{array}\right)
Q^{\pm}(\partial^{\nu}f,\partial^{\eta-\nu}f).
\end{displaymath}
In particular,
\begin{displaymath}
\partial^{\eta} Q(f,f)=\sum_{\nu\leq\eta}\left(\begin{array}{c}\eta\\\nu\end{array}\right)
Q(\partial^{\nu}f,\partial^{\eta-\nu}f).
\end{displaymath}
\end{Lemma}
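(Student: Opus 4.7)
The plan is to reduce the identity to the multivariable Leibniz rule by first showing that the single-derivative version
$$\partial_i Q^{\pm}(f,g) = Q^{\pm}(\partial_i f, g) + Q^{\pm}(f, \partial_i g)$$
holds for every coordinate index $i$, and then iterate and collect terms via induction on $|\eta|$.

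To obtain the single-derivative Leibniz identity, I would first remove the $\xi$-dependence from the collision kernel by the change of variables $u = \xi-\xi_{*}$. After this substitution the gain term reads
\begin{displaymath}
Q^{+}(f,g)(\xi)=\int_{\mathbb{R}^{n}}\int_{S^{n-1}} f\bigl(\xi+\tfrac{1}{2}(|u|\sigma-u)\bigr)\, g\bigl(\xi-\tfrac{1}{2}(u+|u|\sigma)\bigr)\,|u|^{\alpha}h(\hat{u}\cdot\sigma)\,d\sigma\,du,
\end{displaymath}
and the loss term becomes
\begin{displaymath}
Q^{-}(f,g)(\xi)=f(\xi)\int_{\mathbb{R}^{n}}\int_{S^{n-1}} g(\xi-u)\,|u|^{\alpha}h(\hat{u}\cdot\sigma)\,d\sigma\,du.
\end{displaymath}
In both expressions the weight $|u|^{\alpha}h(\hat{u}\cdot\sigma)$ is independent of $\xi$, while $\xi$ enters only as a translation in the arguments of $f$ and $g$. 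Differentiating under the integral sign (justified by the assumed regularity of $f$ together with the cutoff condition \textit{(ii)}, which guarantees local integrability of the kernel against polynomial weights) immediately produces the ordinary product rule: each $\partial_i$ distributes as a sum of two terms, one with $\partial_i f$ and one with $\partial_i g$.

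Once the single-derivative case is established, the general multi-index identity follows by induction on $|\eta|$. Write $\eta=\mu+e_i$ with $|\mu|=|\eta|-1$; apply the inductive hypothesis to $\partial^{\mu} Q^{\pm}(f,f)$ to get a sum over $\nu'\leq\mu$ of $Q^{\pm}(\partial^{\nu'}f,\partial^{\mu-\nu'}f)$, then apply $\partial_i$ to each term using the bilinear product rule just proved. Relabeling indices and using the standard Pascal identity $\binom{\mu}{\nu'}+\binom{\mu}{\nu'-e_i}=\binom{\mu+e_i}{\nu'}$ collects the terms into the claimed binomial sum. The statement for $Q=Q^{+}-Q^{-}$ is then immediate from linearity.

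I do not expect any serious obstacle here: the result is essentially the Leibniz rule for a translation-invariant bilinear form, and the only technical point is the justification of differentiation under the integral, which is straightforward under the smoothness/decay hypotheses implicit in the phrase ``sufficiently smooth.'' The change of variables $\xi_{*}\mapsto u=\xi-\xi_{*}$ is the conceptual ingredient that makes the identity transparent; everything else is bookkeeping with multi-indices.
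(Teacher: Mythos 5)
Your argument is correct and rests on the same structural fact the paper invokes: the paper's proof is a one-line appeal to the translation invariance $\tau_{\Delta}Q(f,f)=Q(\tau_{\Delta}f,\tau_{\Delta}f)$ (citing Villani for details), while you make that invariance explicit through the change of variables $u=\xi-\xi_{*}$, differentiation under the integral, and the multi-index induction with Pascal's identity. So this is essentially the paper's approach, carried out in full detail rather than delegated to a reference.
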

\begin{proof}
This is a direct consequence of the invariance property
$\tau_{\Delta}Q(f,f)= Q(\tau_{\Delta}f,\tau_{\Delta}f)$, where
$\tau_{\Delta}$ is the translation operator defined by
$\tau_{\Delta}g(\xi)=g(\xi-\Delta)$, for $\xi$ and $\Delta$ in
$\mathbb{R}^{n}$.   For details see \cite{Villani}.
\end{proof}
Next,
we need a suitable form for the action of the derivative of the
collision operator $\partial^{\nu}Q(f,f)$ on test functions.
\begin{Lemma}\label{L2}
Let $f$ a sufficiently smooth function.  Then, the action of the
$\eta$ derivative of the collision operator on any test function
$\phi$ is given by
\begin{multline}\label{e3}
\int_{\mathbb{R}^{n}}\partial^{\eta}Q(f,f)\phi
d\xi=\int\int_{\mathbb{R}^{n}\times\mathbb{R}^{n}}
f_{*}\,\partial^{\eta}f\, A[\phi]|u|^{\alpha}d\xi_{*}d\xi\\+
1/2\sum_{0<\nu<\eta}\left(\begin{array}{c}\eta\\\nu
\end{array}
\right)\int\int_{\mathbb{R}^{n}\times\mathbb{R}^{n}}
\partial^{\nu}f \, \partial^{\eta-\nu}f_{*}\, A[\phi]|u|^{\alpha}d\xi_{*}d\xi
\end{multline}
where
\begin{displaymath}
A[\phi]\ =\ A^{+}[\phi]\ -\ A^{-}[\phi]
\end{displaymath}
with
\begin{equation}
A^{+}[\phi]\ =\
\int_{S^{n-1}}(\phi'+\phi'_{*})h(\hat{u}\cdot\sigma)d\sigma\ \ \ \
\mathrm{\  and \  }\ \ \ \ A^{-}[\phi]\ =\ \phi+\phi_{*}\label{e4}
\end{equation}
\end{Lemma}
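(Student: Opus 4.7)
The plan is to reduce the statement to the classical weak (symmetrized) form of the Boltzmann operator applied to each bilinear pairing that arises in Lemma~\ref{L1}'s expansion
$$\int_{\mathbb{R}^n}\partial^{\eta}Q(f,f)\phi\,d\xi=\sum_{\nu\leq\eta}\binom{\eta}{\nu}\int_{\mathbb{R}^n}Q(\partial^{\nu}f,\partial^{\eta-\nu}f)\phi\,d\xi,$$
rewriting each summand in symmetric form and then collecting terms by their combinatorial weights.

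The central intermediate step I would establish is an auxiliary identity
$$\int Q(F,G)\phi\,d\xi+\int Q(G,F)\phi\,d\xi=\int\!\!\int FG_{*}\,A[\phi]|u|^{\alpha}\,d\xi_{*}d\xi$$
for arbitrary smooth $F,G$. The derivation goes in three movements. First, apply the classical pre/post-collisional change of variables $(\xi,\xi_{*})\leftrightarrow(\xi',\xi'_{*})$ (Jacobian one, $B$ invariant) to the gain part of each $Q$, obtaining $\int Q(F,G)\phi=\int\!\!\int FG_{*}(\phi'-\phi)B\,d\sigma d\xi_{*}d\xi$. Second, symmetrize via the combined substitution $\xi\leftrightarrow\xi_{*}$ together with $\sigma\to-\sigma$, under which $\xi'$ and $\xi'_{*}$ interchange while $|u|^{\alpha}h(\hat{u}\cdot\sigma)$ is preserved; averaging converts $(\phi'-\phi)$ into $\tfrac{1}{2}(\phi'+\phi'_{*}-\phi-\phi_{*})$ and $FG_{*}$ into $\tfrac{1}{2}(FG_{*}+GF_{*})$. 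Third, recognize $\phi'+\phi'_{*}-\phi-\phi_{*}$ integrated against $h$ as $A[\phi]$, and use the same combined substitution to verify that $A[\phi]|u|^{\alpha}$ is symmetric in $(\xi,\xi_{*})$, so that $\int\!\!\int FG_{*}A[\phi]|u|^{\alpha}=\int\!\!\int GF_{*}A[\phi]|u|^{\alpha}$ and the $\tfrac{1}{2}$ is absorbed.

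With this identity in hand, the rest is bookkeeping. The extreme indices $\nu=0$ and $\nu=\eta$ carry coefficient one and pair naturally as $Q(f,\partial^{\eta}f)+Q(\partial^{\eta}f,f)$, producing exactly the first term $\int\!\!\int f_{*}\,\partial^{\eta}f\,A[\phi]|u|^{\alpha}d\xi_{*}d\xi$ of the claim. For the interior indices $0<\nu<\eta$, using the symmetry $\binom{\eta}{\nu}=\binom{\eta}{\eta-\nu}$ I rewrite
$$\sum_{0<\nu<\eta}\binom{\eta}{\nu}Q(\partial^{\nu}f,\partial^{\eta-\nu}f)=\tfrac{1}{2}\sum_{0<\nu<\eta}\binom{\eta}{\nu}\bigl[Q(\partial^{\nu}f,\partial^{\eta-\nu}f)+Q(\partial^{\eta-\nu}f,\partial^{\nu}f)\bigr],$$
and then apply the auxiliary identity pairwise, which yields exactly the second term of the claim with its $\tfrac{1}{2}$ prefactor.

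The main delicate point will be the symmetrization bookkeeping. Since $h(\hat{u}\cdot\sigma)$ is not assumed even in $\hat{u}\cdot\sigma$, neither the individual swap $\xi\leftrightarrow\xi_{*}$ nor $\sigma\to-\sigma$ preserves $B$ on its own; only their composition does. One also needs to verify carefully that under the combined substitution the explicit formula $\xi'=\tfrac{\xi+\xi_{*}}{2}+\tfrac{|u|}{2}\sigma$ gives $\xi'\leftrightarrow\xi'_{*}$, so that the symmetry of $A[\phi]|u|^{\alpha}$ in $(\xi,\xi_{*})$ actually holds. Once these invariances are secured, the passage from Lemma~\ref{L1} to the stated weak formulation reduces to a routine grouping of terms.
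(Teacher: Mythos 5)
Your proposal is correct and follows essentially the same route as the paper: the classical pre/post-collisional change of variables, symmetrization under the combined substitution $\xi\leftrightarrow\xi_{*}$, $\sigma\to-\sigma$ (together with the normalization of $h$), and the binomial re-indexing $\binom{\eta}{\nu}=\binom{\eta}{\eta-\nu}$ to regroup the extreme and interior terms. The only difference is packaging—the paper symmetrizes $Q^{+}$ and $Q^{-}$ separately in (\ref{e5})--(\ref{e6}) before applying Lemma~\ref{L1}, while you encode the same symmetrization in the auxiliary identity for $Q(F,G)+Q(G,F)$—which is an equivalent bookkeeping of the same argument.
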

\begin{proof}
For $f$ and $g$ any smooth functions we have after the regular
change of variables $\xi\rightarrow\xi^{'}$
\begin{equation}
\int_{\mathbb{R}^{n}} Q^{+}(f,g)\phi
d\xi=\frac{1}{2}\int\int\int_{\mathbb{R}^{n}\times\mathbb{R}^{n}\times{S^{n-1}}}
(fg_{*}\phi'+f_{*}g\phi'_{*})h(\hat{u}\cdot\sigma)d\sigma
|u|^{\alpha}d\xi_{*}d\xi. \label{e5}
\end{equation}
Also, using the change of variables $\xi\rightarrow\xi_{\ast}$ the
action of the negative collision part is given by
\begin{equation}
\int_{\mathbb{R}^{n}} Q^{-}(f,g)\phi
d\xi=\frac{1}{2}\int\int_{\mathbb{R}^{n}\times\mathbb{R}^{n}}
(fg_{*}\phi+f_{*}g\phi_{*})|u|^{\alpha}d\xi_{*}d\xi. \label{e6}
\end{equation}
Now, using Lemma~\ref{L1}
\begin{displaymath}
\int_{\mathbb{R}^{n}} \partial^{\eta}Q^{+}(f,f)\phi
d\xi=\sum_{\nu\leq\eta}\left(\begin{array}{c}\eta\\\nu
\end{array}\right)\int_{\mathbb{R}^{n}}Q^{+}(\partial^{\nu}f,\partial^{\eta-\nu}f)\phi
d\xi.
\end{displaymath}
Let $f\equiv\partial^{\nu}f$ and
$g\equiv\partial^{\eta-\nu}f$ in (\ref{e5}) to get
\begin{equation}
\int_{\mathbb{R}^{n}} \partial^{\nu}Q^{+}(f,f)\phi
d\xi=1/2\sum_{\nu\leq\eta}\left(\begin{array}{c}\eta\\\nu\end{array}\right)
\int\int_{\mathbb{R}^{n}\times\mathbb{R}^{n}}\partial^{\nu}f\,
\partial^{\eta-\nu} f_{*}\, A^{+}[\phi]|u|^{\alpha}d\xi_{*}d\xi.
\label{e7}
\end{equation}
Following the same idea, and using (\ref{e6}) and the renormalization of
angular cross section
\begin{equation}
\int_{\mathbb{R}^{n}} \partial^{\eta}Q^{-}(f,f)\phi
d\xi=1/2\sum_{\nu\leq\eta}
\left(\begin{array}{c}\eta\\\nu\end{array}\right)\int\int_{\mathbb{R}^{n}\times
\mathbb{R}^{n}}\partial^{\nu}f\,
\partial^{\eta-\nu}f_{*}\, A^{-}[\phi]|u|^{\alpha}d\xi_{*}d\xi.
\label{e8}
\end{equation}
Subtract (\ref{e8}) from (\ref{e7}) and split the total sum to conclude.
\end{proof}
The moments of the derivative of the collision operator need to be
controlled in order to find a bound for the moments of the
solution's derivatives.  The following Lemma is a first step in this
direction.
\begin{Lemma}\label{L3}
Assume $\phi\geq0$, then for any multi-index $\eta$
\begin{multline}\label{e9}
\int_{\mathbb{R}^{n}}\partial^{\eta}Q(f,f)\mathrm{sgn}(\partial^{\eta}f)\phi d\xi\leq\\
\int\int_{\mathbb{R}^{n}\times\mathbb{R}^{n}}f_{*}|\partial^{\eta} f|A[\phi]|u|^{\alpha}
d\xi_{*}d\xi+ 2\int\int_{\mathbb{R}^{n}\times\mathbb{R}^{n}}f_{*}|\partial^{\eta}
f|\phi_{*}|u|^{\alpha}d\xi_{*}d\xi\\+1/2\sum_{0<\nu<\eta}\left(\begin{array}{c}\eta\\
\nu\end{array}\right)\int\int_{\mathbb{R}^{n}\times\mathbb{R}^{n}}|\partial^{\nu}
f\partial^{\eta-\nu}f_{*}|A[\phi]|u|^{\alpha}d\xi_{*}d\xi\\+\sum_{0<\nu<\eta}
\left(\begin{array}{c}\eta\\\nu\end{array}\right)\int\int_{\mathbb{R}^{n}\times
\mathbb{R}^{n}}|\partial^{\nu}f\partial^{\eta-\nu}f_{*}|\phi_{*}|u|^{\alpha}d\xi_{*}d\xi
\end{multline}
\end{Lemma}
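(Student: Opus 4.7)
The plan is to obtain the bound by a direct application of Lemma~\ref{L2} with the non-smooth ``test function'' $\tilde\phi := \phi\cdot\mathrm{sgn}(\partial^\eta f)$, followed by pointwise inequalities that trade signed integrands for absolute values.

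First, I would observe that the identity in Lemma~\ref{L2} was derived by changes of variable (the pre/post-collisional swap $\xi\to\xi'$ and the relabeling $\xi\leftrightarrow\xi_*$) together with the reindexing $\nu\to\eta-\nu$ that exploits $\binom{\eta}{\nu}=\binom{\eta}{\eta-\nu}$. None of these steps requires smoothness of the test function, so the identity remains valid for any bounded measurable $\tilde\phi$. Consequently
\begin{equation*}
\int_{\mathbb R^n}\partial^\eta Q(f,f)\,\phi\,\mathrm{sgn}(\partial^\eta f)\,d\xi = \int\int f_*\,\partial^\eta f\,A[\tilde\phi]\,|u|^\alpha\,d\xi_*d\xi + \frac{1}{2}\sum_{0<\nu<\eta}\binom{\eta}{\nu}\int\int \partial^\nu f\,\partial^{\eta-\nu}f_*\,A[\tilde\phi]\,|u|^\alpha\,d\xi_*d\xi.
\end{equation*}

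The second step is to expand $A[\tilde\phi]=A^+[\tilde\phi]-\tilde\phi-\tilde\phi_*$ in each piece. For the first (main) term, the crucial observation is the exact identity $\partial^\eta f\cdot\tilde\phi = |\partial^\eta f|\,\phi$; hence the $-\tilde\phi$ contribution equals $-\int\int f_*|\partial^\eta f|\phi\,|u|^\alpha$ \emph{with equality}. For the $-\tilde\phi_*$ contribution the signs of $\partial^\eta f$ and $\partial^\eta f_*$ are decoupled, so I would use the crude bound $|\partial^\eta f\,\tilde\phi_*|=|\partial^\eta f|\phi_*$ to get $-\int\int f_*\partial^\eta f\,\tilde\phi_*\,|u|^\alpha\leq\int\int f_*|\partial^\eta f|\phi_*\,|u|^\alpha$. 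For the $A^+[\tilde\phi]$ piece, the pointwise inequality $|A^+[\tilde\phi]|\leq A^+[\phi]$ (which follows from $|\tilde\phi'+\tilde\phi_*'|\leq\phi'+\phi_*'$) combined with $f_*\geq 0$ yields $\int\int f_*\partial^\eta f\,A^+[\tilde\phi]\,|u|^\alpha\leq\int\int f_*|\partial^\eta f|A^+[\phi]\,|u|^\alpha$. Finally, the identity $A^+[\phi]=A[\phi]+\phi+\phi_*$ lets the $-\phi$ contributions cancel exactly, and delivers the first two terms on the right-hand side of \eqref{e9}.

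The sum over $0<\nu<\eta$ is treated by the same three-step recipe but now the integrand $\partial^\nu f\,\partial^{\eta-\nu}f_*$ has arbitrary sign; no exact identity analogous to $\partial^\eta f\cdot\mathrm{sgn}(\partial^\eta f)=|\partial^\eta f|$ is available. Every one of the three pieces coming from $A^+[\tilde\phi]$, $-\tilde\phi$, $-\tilde\phi_*$ must therefore be passed through the loose inequality $|\,\cdot\,\mathrm{sgn}(\cdot)|\leq|\,\cdot\,|$. After substituting $A^+[\phi]=A[\phi]+\phi+\phi_*$ and grouping terms one obtains the $\frac{1}{2}\binom{\eta}{\nu}|\partial^\nu f\,\partial^{\eta-\nu}f_*|A[\phi]$ contribution together with correction integrals weighted by $\phi$ and $\phi_*$. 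To package the corrections into the compact $\phi_*$-only form claimed in \eqref{e9}, I would exploit the symmetry $(\xi,\xi_*)\leftrightarrow(\xi_*,\xi)$ combined with the reindexing $\nu\leftrightarrow\eta-\nu$ (together with $\binom{\eta}{\nu}=\binom{\eta}{\eta-\nu}$) to rewrite $\phi$-weighted integrals as $\phi_*$-weighted ones.

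The main obstacle is the bookkeeping in the cross-derivative sum. Unlike the first term, where non-negativity of $f_*$ together with the exact identity $\partial^\eta f\cdot\mathrm{sgn}(\partial^\eta f)=|\partial^\eta f|$ allows one exact equality and only one crude absolute-value bound, nothing analogous is available for the mixed products $\partial^\nu f\,\partial^{\eta-\nu}f_*$: every signed factor must be bounded by its modulus, and it is the careful use of the symmetrizing reindexing, rather than any new ingredient, that packages the resulting corrections into the stated form.
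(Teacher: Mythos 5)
Your proposal is correct and follows essentially the same route as the paper's proof: substitute the test function $\mathrm{sgn}(\partial^{\eta}f)\,\phi$ into Lemma~\ref{L2}, use $\left|A^{+}[\mathrm{sgn}(\partial^{\eta}f)\phi]\right|\leq A^{+}[\phi]$ together with the exact identity $\partial^{\eta}f\,\mathrm{sgn}(\partial^{\eta}f)\phi=|\partial^{\eta}f|\phi$ in the leading term and crude absolute-value bounds elsewhere, which reproduces the paper's pointwise estimate (\ref{e10}) and its ``similar argument'' for the cross terms. The only caveat is that honest bookkeeping in the cross sum (yours and, equally, the paper's terse version) actually yields the last term of (\ref{e9}) with an extra factor $2$ after the symmetrization $\nu\leftrightarrow\eta-\nu$; this is harmless, since those contributions are simply absorbed into the lower-order $\delta^{\eta^{-}}$ terms in Lemma~\ref{L6}.
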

\begin{proof}
Let $\Psi\geq 0$ and  $\phi=\mathrm{sgn}(\partial^{\eta}f)\Psi$ in Lemma~\ref{L2}.
In one hand,  observe that for the first term in (\ref{e3})  $\left|A^{+}[\mathrm{sgn}
(\partial^{\eta}f)\Psi]\right|\leq A^{+}[\Psi]$, hence
\begin{displaymath}
\partial^{\eta}fA^{+}[\mathrm{sgn}(\partial^{\eta} f)\Psi]\leq|\partial^{\eta}f|A^{+}[\Psi].
\end{displaymath}
On the other hand
\begin{displaymath}
\partial^{\eta}fA^{-}[\mathrm{sgn}(\partial^{\eta} f)\Psi]=|\partial^{\eta}f|A^{-}
[\Psi]-\Psi_{*}\left\{|\partial^{\eta}f|-\mathrm{sgn}(\partial^{\eta}f_{*})
\partial^{\eta}f\right\}.
\end{displaymath}
Gathering these two inequations we have
\begin{equation}
\partial^{\eta}fA[\mathrm{sgn}(\partial^{\eta}f)\Psi]\leq|\partial^{\eta}
f|A[\Psi]+2|\partial^{\eta}f|\Psi_{*}.
\label{e10}
\end{equation}
Note that (\ref{e10}) yields a control for the first term in
(\ref{e3}) of Lemma~\ref{L2}.  Moreover, a similar argument also
works for the second term in (\ref{e3}).
\end{proof}
Although the expression (\ref{e9}) may look cumbersome, we point out
that the main idea here is to separate the terms that depend on the
actual derivative of order $\eta$ from the lower order derivatives.
In this way, it is possible to take advantage of the expression
(\ref{e9}) when an induction argument is used.\\
\\
We are now ready to study the moments of the solution's derivatives
for $p>1$.  The idea is to follow the work \cite{Bobylev} adapting
the results to this extended case.  Several Lemmas are needed before
attempting to prove a time uniform control on these moments. Let us
first consider, in the following Lemma, test functions of the form
$\phi_{p}=|\xi|^{2p}$, with $p>1$.  For a detailed proof see
\cite{Bobylev} for bounded angular cross section function $h(z)$ and
from \cite{Gamba} for $h(z)$ satisfying conditions
\textit{(i)-(ii)}.  Nevertheless,  we present a slightly modified
argument from the one in \cite{Gamba} to handle condition
\textit{(ii)}:
\begin{Lemma}\label{L4}
Under the previous assumptions on $h(\cdot)$, for every $p\geq 1$,
\begin{equation}
A[\phi_{p}]=A[|\xi|^{2p}]\leq
-(1-\gamma_{p})(|\xi|^{2p}+|\xi_{*}|^{2p})+\gamma_{p}((|\xi|^{2}+
|\xi_{*}|^{2})^{p}-|\xi|^{2p}-|\xi_{*}|^{2p})
\label{e11}
\end{equation}
where the constant $\gamma_{p}$ is given by the formula
\begin{equation}\label{ga}
\gamma_{p}=\omega_{n-2}\int^{1}_{-1}\left(\frac{1+z}{2}\right)^{p}\bar{h}(z)
(1-z^{2})^{\frac{n-3}{2}}dz
\end{equation}
with $\bar{h}(z)=\frac{1}{2}(h(z)+h(-z))$.
 In particular, for  $\epsilon=n-1-\mu>0$
\begin{equation}\label{ga1}
 \lim_{p\rightarrow\infty}\gamma_{p} \sim p^{-\epsilon/2}
 \searrow0 \, ,
 \end{equation}
  where $\mu$ is the growth exponent of condition
 \textit{(ii)} on $h(z)$.
 \\
Furthermore if $h(z)$ is bounded, the following estimate holds for
for $p>1$
\begin{displaymath}
\gamma_{p}<\min\left\{1,\frac{16\pi\left\|h\right\|_{\infty}}{p+1}\right\}
\, .
\end{displaymath}
\end{Lemma}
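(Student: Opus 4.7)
The plan is to adapt the sharp Povzner estimate of Bobylev--Gamba--Panferov \cite{Bobylev} to accommodate the singular angular kernel permitted by condition \textit{(ii)}, in the spirit of \cite{Gamba}. First I would exploit the $\sigma\mapsto-\sigma$ symmetry of the sphere integral defining $A^{+}[\phi_{p}]$. Under this involution $\xi'$ and $\xi'_{*}$ are exchanged and $h(\hat{u}\cdot\sigma)$ is replaced by $h(-\hat{u}\cdot\sigma)$; averaging the two forms reduces the problem to the symmetric kernel $\bar{h}$, which by assumption \textit{(i)} is nondecreasing on $(0,1)$.

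Next I would pass to the center-of-mass coordinates $V=(\xi+\xi_{*})/2$, $u=\xi-\xi_{*}$, and use conservation of energy $|\xi'|^{2}+|\xi'_{*}|^{2}=|\xi|^{2}+|\xi_{*}|^{2}\equiv E$ to derive the representation
\[
|\xi'|^{2}=\tfrac{1+z}{2}|\xi|^{2}+\tfrac{1-z}{2}|\xi_{*}|^{2}+\sqrt{1-z^{2}}\,|\xi||\xi_{*}|\cos\phi,
\]
with a symmetric formula for $|\xi'_{*}|^{2}$ (obtained by flipping the sign of the cross term), where $z=\hat{u}\cdot\sigma$ and $\phi$ parametrises the $(n-2)$-subsphere orthogonal to $\hat u$. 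For $p\geq 1$, convexity of $t\mapsto t^{p}$ combined with the AM--GM estimate $2\sqrt{(1+z)(1-z)|\xi|^{2}|\xi_{*}|^{2}}\leq(1+z)|\xi_{*}|^{2}+(1-z)|\xi|^{2}$ applied to the cross term yields a pointwise upper bound for $|\xi'|^{2p}+|\xi'_{*}|^{2p}$ in which the $\phi$-dependence is controlled. Integrating this bound against $\bar{h}(z)\,d\sigma$, the azimuthal integral trivialises and the remaining one-dimensional integral in $z$ reassembles exactly into the claimed Povzner inequality with $\gamma_{p}$ as stated.

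For the asymptotic $\gamma_{p}\sim p^{-\epsilon/2}$ with $\epsilon=n-1-\mu$, I would perform a Laplace-type analysis near $z=1$, where the weight $\left(\tfrac{1+z}{2}\right)^{p}$ concentrates. Substituting $z=1-2w/p$ gives $\left(\tfrac{1+z}{2}\right)^{p}\to e^{-w}$, while condition \textit{(ii)} yields
\[
\bar{h}(z)(1-z^{2})^{(n-3)/2}\leq C\,(1-z^{2})^{(n-3-\mu)/2}\sim C\,(4w/p)^{(n-3-\mu)/2}.
\]
Integrating produces a factor of $p^{-(n-1-\mu)/2}\Gamma((n-1-\mu)/2)$ up to constants, which is precisely the stated rate. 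The bounded-$h$ refinement $\gamma_{p}\leq 16\pi\|h\|_{\infty}/(p+1)$ arises from the same substitution in the absence of the singular factor, reducing to an elementary Beta-function evaluation.

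The principal difficulty I anticipate is in the second step: producing a pointwise Povzner estimate whose coefficients decouple cleanly into the one-dimensional sphere integral defining $\gamma_{p}$, while handling the cross term $\sqrt{1-z^{2}}|\xi||\xi_{*}|\cos\phi$ in a form compatible with the (possibly singular) weight $\bar{h}$. The ``slight modification'' of \cite{Gamba} alluded to in the text presumably lies in how this cross term is absorbed, so that the resulting weighted integral remains well-defined under the weaker integrability condition \textit{(ii)} rather than under pointwise boundedness of $h$; the key is that $\mu<n-1$ keeps $\bar h(z)(1-z^{2})^{(n-3)/2}$ integrable at the endpoints, and the exponential weight $\left(\tfrac{1+z}{2}\right)^{p}$ does the rest.
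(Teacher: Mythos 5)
Your treatment of the asymptotics of $\gamma_{p}$ is fine and is, in substance, the argument the paper actually gives: the paper substitutes $s=(1+z)/2$, uses \textit{(ii)} in the form $\bar h(z)\le C(1-z^{2})^{-\mu/2}$, and identifies
\begin{displaymath}
\gamma_{p}\;\le\;2^{\epsilon-1}C\,\omega_{n-2}\,\beta\bigl(p+\epsilon/2,\epsilon/2\bigr)\;\sim\;p^{-\epsilon/2},
\end{displaymath}
and your Laplace rescaling $z=1-2w/p$ is just the standard proof of that Beta--function asymptotic; both arguments give (and only need) an upper bound of this order, with $\gamma_{p}\searrow0$ coming from monotone/dominated convergence, and the bounded-$h$ estimate is the same elementary evaluation. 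So on the part the paper proves, you and the paper coincide.

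The genuine gap is in your sketch of the inequality \eqref{e11} itself, which the paper does not reprove (it is quoted from \cite{Bobylev} for bounded $h$ and \cite{Gamba} under \textit{(i)-(ii)}). In center-of-mass variables one has $|\xi'|^{2}=\tfrac{E}{2}\bigl(1+\lambda\,\hat V\cdot\sigma\bigr)$, $|\xi'_{*}|^{2}=\tfrac{E}{2}\bigl(1-\lambda\,\hat V\cdot\sigma\bigr)$ with $E=|\xi|^{2}+|\xi_{*}|^{2}$, $V=(\xi+\xi_{*})/2$, $\lambda=2|V||u|/E\le1$; the convex weight is therefore evaluated at the angle $\hat V\cdot\sigma$, whereas the kernel $h$ and the integral \eqref{ga} defining $\gamma_{p}$ involve $\hat u\cdot\sigma$. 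The entire content of the sharp Povzner lemma is the comparison of these two angular averages, and that is exactly where the monotonicity of $\bar h$ from assumption \textit{(i)} enters (a rearrangement-type argument in \cite{Bobylev}); you invoke \textit{(i)} only to symmetrize to $\bar h$ and never use the monotonicity afterwards. Your proposed substitute --- AM--GM on the cross term followed by ``the azimuthal integral trivialises'' --- does not deliver the sharp constant: bounding the cross term pointwise discards precisely the averaging in $\phi$ that produces $\gamma_{p}$, and a non-sharp Povzner constant (one not decaying in $p$) is useless downstream, since Lemma~\ref{L8} and Theorem~\ref{T1} need the factor $\gamma_{p}\sim p^{-\epsilon/2}$ multiplying the whole $\delta^{\eta}S_{p}$ term to beat $p^{\alpha/2+b}$. (Minor point: your formula for $|\xi'|^{2}$ with cross coefficient $|\xi||\xi_{*}|$ is an identity only when $\xi\perp\xi_{*}$; the general coefficient is $\sqrt{|\xi|^{2}|\xi_{*}|^{2}-(\xi\cdot\xi_{*})^{2}}$, though this worst case is harmless for an upper bound.) To close the argument you must either reproduce the monotonicity lemma of \cite{Bobylev}/\cite{Gamba} in full or, as the paper does, cite it and confine the new work to the decay rate of $\gamma_{p}$ under condition \textit{(ii)}.
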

\begin{proof}
It is easy to see that $
\lim_{p\rightarrow\infty}\gamma_{p}\searrow0$, since  by conditions
\textit{(i-ii)} in $h(z)$ it follows that $\gamma_{1}$ is bounded.
In particular
\begin{displaymath}
\left(\frac{1+z}{2}\right)^{p}\searrow 0\;\;\mbox{a.e.
in}\;\;(-1,1)\;\;\mbox{as}\;\;p\rightarrow\infty
\end{displaymath}
so $\gamma_{p}$ is decreasing on $p$.  Using Lebesgue's Dominated
Convergence the decay of $\gamma_{p}\searrow0$ follows.
\\
However, using \textit{(ii)} on $h(z)$ we can say more about the
decreasing rate of $\gamma_{p}$ to zero. Since
\begin{displaymath}
\bar{h}(z)\leq C(1-z^{2})^{-\mu/2}
\end{displaymath}
then
\begin{align}\label{e11.1}
\gamma_{p}&=\omega_{n-2}\int^{1}_{-1}\left(\frac{1+z}{2}\right)^{p}
\bar{h}(z)(1-z^{2})^{\frac{n-3}{2}}dz\nonumber\\ &\leq 2^{\epsilon-1}C\omega_{n-2}
\int^{1}_{0}s^{p+\epsilon/2-1}(1-s)^{\epsilon/2-1}ds\nonumber\\
&=2^{\epsilon-1}C\omega_{n-2}\beta(p+\epsilon/2,\epsilon/2)
\end{align}
where $\epsilon=n-1-\mu>0$.  Then we can estimate
\begin{displaymath}
\beta(p+\epsilon/2,\epsilon/2)=\frac{\Gamma(p+\epsilon/2)\Gamma(\epsilon/2)}{\Gamma(p+\epsilon)}\sim
p^{-\epsilon/2}\;\;\mbox{for large}\;\;p.
\end{displaymath}
It is concluded that $\gamma_{p}\sim p^{-\epsilon/2}$ when
$p\rightarrow\infty$ and \eqref{ga1} holds.
\end{proof}
$Remarks:$
\begin{itemize}
\item Lemma \ref{L4} is a sharp version of the so called Povzner inequalities.
It is proved after a clever manipulation of the post collision variables in the
positive part of the collision operator.  The convexity of $h(\cdot)$ plays an
essential role to conclude Lemma (\ref{L4}) because it provides a non decreasing
property to the action of the gain operator on convex functions. \item For hard
spheres $h(\hat{u}\cdot\sigma)=1/4\pi$, hence $\gamma_{p}<\min\left\{1,\frac{4}{p+1}\right\}$.
\end{itemize}
\begin{Lemma}\label{L5}
Assume that $p>1$, and let $k_{p}=[\frac{p+1}{2}]$.  Then for all
$x,y>0$ the following inequalities hold
\begin{equation}
\sum^{k_{p}-1}_{k=1}\left(\begin{array}{c}p\\k\end{array}\right)(x^{k}y^{p-k}+x^{p-k}y^{k})
\leq(x+y)^{p}-x^{p}-y^{p}\leq\sum^{k_{p}}_{k=1}
\left(\begin{array}{c}p\\k\end{array}\right)(x^{k}y^{p-k}+x^{p-k}y^{k}).
\label{e12}
\end{equation}
\end{Lemma}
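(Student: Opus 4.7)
The claim is symmetric in $(x,y)$ and homogeneous of degree $p$, so I would first assume without loss of generality that $y \geq x > 0$, set $t = x/y \in (0,1]$, and divide through by $y^p$, reducing to
\begin{equation*}
\sum_{k=1}^{k_p-1}\binom{p}{k}(t^k + t^{p-k}) \;\leq\; (1+t)^p - t^p - 1 \;\leq\; \sum_{k=1}^{k_p}\binom{p}{k}(t^k + t^{p-k}), \qquad t \in (0,1].
\end{equation*}
Both sides vanish at $t=0$, and the identity $(1+t)^p - t^p - 1 = t^p[(1+1/t)^p - (1/t)^p - 1]$ shows the inequality for $t \in (0,1]$ automatically propagates to $t \geq 1$.

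For $p$ a positive integer, the finite binomial expansion gives $(1+t)^p - t^p - 1 = \sum_{k=1}^{p-1}\binom{p}{k} t^k$, and the symmetry $\binom{p}{k} = \binom{p}{p-k}$ rewrites this as $\tfrac12 \sum_{k=1}^{p-1}\binom{p}{k}(t^k + t^{p-k})$. For odd $p = 2m+1$ this equals $\sum_{k=1}^{m}\binom{p}{k}(t^k + t^{p-k})$, exactly matching the lower bound since $k_p - 1 = m$; the upper bound is then strictly larger by the single positive symmetric term at $k = k_p = m+1$. For even $p = 2m$ it equals $\sum_{k=1}^{m-1}\binom{p}{k}(t^k + t^{p-k}) + \binom{p}{m} t^m$, which exceeds the lower sum (with $k_p - 1 = m-1$) by $\binom{p}{m} t^m$ and is exceeded by the upper sum (with $k_p = m$) by the same amount, since the upper sum double-counts the middle contribution at $k = m$.

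For real non-integer $p > 1$, I would exploit the absolutely convergent binomial series $(1+t)^p = \sum_{k \geq 0}\binom{p}{k} t^k$ on $[0,1]$, together with Taylor's theorem with integral remainder,
\begin{equation*}
(1+t)^p - \sum_{k=0}^{n-1}\binom{p}{k} t^k \;=\; n\binom{p}{n}\int_0^t (t-s)^{n-1}(1+s)^{p-n}\,ds,
\end{equation*}
whose sign equals that of $\binom{p}{n}$ because the integrand is positive for $t > 0$. Since $p > 1$ and $k_p + 1 \leq \lfloor p \rfloor + 1$, both $\binom{p}{k_p}$ and $\binom{p}{k_p+1}$ are strictly positive, furnishing two-sided control of $(1+t)^p$ by its truncated series at orders $k_p$ and $k_p+1$. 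These account for the $t^k$ contributions in the target sums, while the companion terms $t^{p-k}$ and $t^p$ are obtained by invoking the $t \leftrightarrow 1/t$ symmetry noted above. The main technical obstacle is the mismatch between the integer-power terms $t^k$ produced naturally by the binomial series and the non-polynomial terms $t^p, t^{p-k}$ appearing on the right-hand side; the precise choice $k_p = \lfloor(p+1)/2\rfloor$ is exactly what aligns the truncations on either side of the symmetry axis $k = p/2$, so that the stitched Taylor remainders carry the correct signs and close the estimate.
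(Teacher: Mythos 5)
Your reduction to $t=x/y\in(0,1]$ and your treatment of integer $p$ are fine (for odd integer $p$ the lower bound is even an identity), but the non-integer case -- which is the only case the paper actually needs, since the lemma is applied to fractional moment orders -- contains a genuine gap, and it sits exactly where you placed the key claim. The integral remainder you quote has the \emph{sign of} $\binom{p}{n}$; since $\binom{p}{k_p}$ and $\binom{p}{k_p+1}$ are both positive, truncating the series at order $k_p$ or $k_p+1$ gives two \emph{lower} bounds for $(1+t)^p$, not ``two-sided control''. An upper bound for $(1+t)^p$ by a truncated binomial series requires a negative remainder, and for non-integer $p$ the first negative coefficient occurs only at order $\lfloor p\rfloor+2$, far beyond $k_p$. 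So the mechanism you describe cannot yield the right-hand inequality: what must actually be shown is that the whole tail $\sum_{k\geq k_p+1}\binom{p}{k}t^{k}$ is dominated by the fractional-power terms $t^{p}+\sum_{k=1}^{k_p}\binom{p}{k}t^{p-k}$, and a reverse domination (with $k_p-1$) for the left-hand inequality. The $t\leftrightarrow 1/t$ symmetry does not supply this -- it merely maps the asserted inequality to itself -- and for non-integer $p$ there is no reflection identity $\binom{p}{k}=\binom{p}{p-k}$ behind the ``symmetry axis $k=p/2$'' you invoke. Your closing sentence, that the choice $k_p=[(p+1)/2]$ ``aligns the truncations \dots and closes the estimate,'' asserts precisely the statement to be proved.

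For comparison: the paper itself offers no proof of this lemma; it is taken from Bobylev and Bobylev--Gamba--Panferov, where it is established by a genuine argument (essentially induction on $p$ over unit intervals, exploiting $\frac{d}{dt}\bigl[(1+t)^p-1-t^p\bigr]=p\bigl[(1+t)^{p-1}-t^{p-1}\bigr]$ together with $k\binom{p}{k}=p\binom{p-1}{k-1}$, which lowers $p$ by one while shifting the summation index). If you want to salvage your series route, you would need concrete estimates in place of the final paragraph: for instance, for the upper bound a termwise comparison such as $\binom{p}{k}t^{k}\leq\binom{p}{\lceil p\rceil-k}\,t^{\,p-\lceil p\rceil+k}$ for $k_p<k\leq\lceil p\rceil$ (valid since $t\leq1$ and $p-\lceil p\rceil+k\leq k$, once the coefficient inequality is proved), plus an alternating-series argument showing the remaining tail $k>\lceil p\rceil$ is nonpositive, and a different pairing for the lower bound. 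None of that is in your sketch, and it is exactly where the specific value of $k_p$ does its work.
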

$Remark:$ The binomial coefficient for a non integer $p$ is defined for $k\geq1$ as
\begin{equation}
\left(\begin{array}{c}p\\k\end{array}\right)=\frac{p(p-1)\cdots(p-k+1)}{k!}\mbox{, and }
\left(\begin{array}{c}p\\0\end{array}\right)=1.
\label{e13}
\end{equation}
The following Lemma is a consequence of the previous two results.
It provides a control on the collision operator's moments of order $p$ using moments
of the solution's derivatives of order strictly less that $p$.
\begin{Lemma}\label{L6}
Assume $h(z)$ fulfill all the conditions discussed, then for every $p>1$ and
multi-index $\eta$
\begin{multline}
\int_{\mathbb{R}^{n}} \partial^{\eta}Q(f,f)sgn(\partial^{\eta} f)|\xi|^{2p}d\xi
\leq-(1-\gamma_{p})k_{\alpha}\delta^{\eta}m_{p+\alpha/2}+\gamma_{p}\delta^{\eta}
S_{p}\\+\delta^{\eta^{-}}(m_{\alpha/2}m_{p})+\delta^{\eta^{-}}(m_{0}m_{p+\alpha/2})
\label{e14}
\end{multline}
where $k_{\alpha}$ is a positive constant depending on $\alpha$ but not on $p$.\\
In addition,
\begin{displaymath}
\delta^{\eta}S_{p}\equiv\sum^{k_{p}}_{k=1}\left(\begin{array}{c}p\\k\end{array}\right)
\left\{\delta^{\eta}(m_{k}m_{p-k+\alpha/2})+\delta^{\eta}(m_{k+\alpha/2}m_{p-k})\right\}
\mbox{ with }k_{p}=\left[\frac{p+1}{2}\right]
\end{displaymath}
and
\begin{displaymath}
\delta^{\eta^{-}}(m_{\alpha/2}m_{p})+\delta^{\eta^{-}}(m_{0}m_{p+\alpha/2})\equiv2
\sum_{\nu<\eta}\left(\begin{array}{c}\eta\\\nu\end{array}\right)\left\{\delta^{\eta-\nu}
m_{\alpha/2}\delta^{\nu}m_{p}+\delta^{\eta-\nu}m_{0}\delta^{\nu}m_{p+\alpha/2}\right\}.
\end{displaymath}
\end{Lemma}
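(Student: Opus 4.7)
The plan is to start from Lemma~\ref{L3} applied to the particular test function $\phi_{p}(\xi)=|\xi|^{2p}$, and then use Lemmas~\ref{L4} and~\ref{L5} to convert the abstract expression $A[\phi_{p}]$ into concrete polynomial moments. Lemma~\ref{L3} already separates its right-hand side into two natural blocks: a ``leading'' block carrying $f_{*}|\partial^{\eta}f|$ (the full derivative sits on one factor and the other factor is undifferentiated), and a ``cross'' block carrying $|\partial^{\nu}f\,\partial^{\eta-\nu}f_{*}|$ with $0<\nu<\eta$. The first block is responsible for the sharp dissipative term $-(1-\gamma_{p})k_{\alpha}\delta^{\eta}m_{p+\alpha/2}$ together with the companion sum $\gamma_{p}\delta^{\eta}S_{p}$, while the second is exactly what gets packaged into the $\delta^{\eta^{-}}(\cdots)$ abbreviations on the right of (\ref{e14}).

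For the leading block, I would first apply Lemma~\ref{L4} to bound $A[\phi_{p}]$ pointwise and then apply Lemma~\ref{L5} to the residual $(|\xi|^{2}+|\xi_{*}|^{2})^{p}-|\xi|^{2p}-|\xi_{*}|^{2p}$, so that
\[
A[\phi_{p}]+2\phi_{p,*}\ \leq\ -(1-\gamma_{p})|\xi|^{2p}\ +\ (1+\gamma_{p})|\xi_{*}|^{2p}\ +\ \gamma_{p}\sum_{k=1}^{k_{p}}\binom{p}{k}\bigl(|\xi|^{2k}|\xi_{*}|^{2(p-k)}+|\xi|^{2(p-k)}|\xi_{*}|^{2k}\bigr).
\]
Inserting this into $\int\!\int f_{*}|\partial^{\eta}f|(A[\phi_{p}]+2\phi_{p,*})|u|^{\alpha}d\xi_{*}d\xi$, the $-(1-\gamma_{p})|\xi|^{2p}$ term is the one from which the loss coefficient is extracted: I would invoke the classical lower bound $\int_{\mathbb{R}^{n}}f(\xi_{*},t)|u|^{\alpha}d\xi_{*}\geq k_{\alpha}(1+|\xi|^{2})^{\alpha/2}$, valid with $k_{\alpha}>0$ depending only on the conserved mass and energy of $f_{0}$, hence independent of both $p$ and $t$. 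For the $\gamma_{p}$ cross products the elementary estimate $|u|^{\alpha}\leq 2^{\alpha/2}(|\xi|^{\alpha}+|\xi_{*}|^{\alpha})$ turns every $|\xi|^{2k}|\xi_{*}|^{2(p-k)}|u|^{\alpha}$ into two monomials whose integrals against $f_{*}|\partial^{\eta}f|$ factor as products of the form $m_{p-k}\delta^{\eta}m_{k+\alpha/2}$ and $m_{k}\delta^{\eta}m_{p-k+\alpha/2}$ (and symmetrically in the swapped term), reproducing exactly the terms in $\delta^{\eta}S_{p}$. The leftover $(1+\gamma_{p})|\xi_{*}|^{2p}$ contribution produces only bounded prefactors (mass and low moments of $f$) and is absorbed into either $\delta^{\eta}S_{p}$ or into the $\delta^{\eta^{-}}$ basket of the next step.

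For the cross block one does not need the sharp splitting: the crude bound $A[\phi_{p}]+2\phi_{p,*}\leq 2\bigl((|\xi|^{2}+|\xi_{*}|^{2})^{p}+|\xi_{*}|^{2p}\bigr)$ suffices, since we are no longer trying to preserve the dissipative coefficient. Expanding $(|\xi|^{2}+|\xi_{*}|^{2})^{p}$ by the ordinary binomial formula, estimating $|u|^{\alpha}$ as before, and integrating against $|\partial^{\nu}f\,\partial^{\eta-\nu}f_{*}|$, each resulting integral splits as a product $\delta^{\eta-\nu}m_{j}\cdot\delta^{\nu}m_{l}$ with either $(j,l)=(\alpha/2,p)$ or $(0,p+\alpha/2)$ after collapsing the internal $k$-sum against the (uniformly in $t$) finite lower moments supplied by Theorem~\ref{T1}. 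Summing over $0<\nu<\eta$ with the weights $\binom{\eta}{\nu}$ coming from Lemma~\ref{L1}, and picking up the factor of $2$ by combining the $A[\phi_{p}]$ and the $2\phi_{p,*}$ contributions, one recovers exactly the compact form $\delta^{\eta^{-}}(m_{\alpha/2}m_{p})+\delta^{\eta^{-}}(m_{0}m_{p+\alpha/2})$.

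The main obstacle is guaranteeing that the loss constant $k_{\alpha}$ is truly independent of $p$: this is precisely why one must use Lemma~\ref{L4} in its sharp form rather than any classical Povzner bound, because only then is all of the $p$-dependence pushed into the factor $\gamma_{p}$, which furthermore decays to zero as $p\to\infty$ by (\ref{ga1}). The remaining work is essentially bookkeeping of multi-indices, and it is clean because the $\delta^{\eta^{-}}$ notation was tailored to absorb exactly the Leibniz-type sum produced by Lemma~\ref{L1}; no new analytical ingredient beyond Lemmas~\ref{L3}--\ref{L5} and the standard loss lower bound is required.
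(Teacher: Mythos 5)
Your treatment of the leading block (the terms carrying $f_{*}|\partial^{\eta}f|$) is essentially the paper's argument: sharp Lemma~\ref{L4} plus Lemma~\ref{L5}, together with the loss lower bound \eqref{e14.8}, yield $-(1-\gamma_{p})k_{\alpha}\delta^{\eta}m_{p+\alpha/2}$ plus the $\nu\in\{0,\eta\}$ part of $\gamma_{p}\delta^{\eta}S_{p}$, with the leftover $|\xi_{*}|^{2p}$ piece going into the $\nu=0$ part of the $\delta^{\eta^{-}}$ terms. Two small corrections there: the constant $k_{\alpha}$ in \eqref{e14.8} requires the entropy bound as well (it rules out concentration; mass and energy alone do not give a uniform-in-time pointwise lower bound), and to obtain \eqref{e14} with exactly the stated coefficients you should use the subadditivity $|u|^{\alpha}\leq|\xi|^{\alpha}+|\xi_{*}|^{\alpha}$, valid for $\alpha\in(0,1]$, rather than the version carrying an extra factor $2^{\alpha/2}$.

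The genuine gap is in your cross block $0<\nu<\eta$. First, invoking Theorem~\ref{T1} there is circular: Theorem~\ref{T1} is proved from Lemma~\ref{L8} and Corollary~\ref{C1}, which rest on this very Lemma, and in any case \eqref{e14} is a purely structural, pointwise-in-time inequality that must not depend on moment propagation. Second, even granting bounded lower moments, the proposed ``collapse'' does not produce \eqref{e14}: expanding $(|\xi|^{2}+|\xi_{*}|^{2})^{p}$ by the full binomial formula and integrating against $|\partial^{\nu}f\,\partial^{\eta-\nu}f_{*}|\,|u|^{\alpha}$ produces terms $\binom{p}{k}\,\delta^{\nu}m_{k+\alpha/2}\,\delta^{\eta-\nu}m_{p-k}$ (and symmetric ones) with $k$ running up to $p$, total binomial mass of order $2^{p}$, and \emph{no} $\gamma_{p}$ prefactor; these are not of the form $\delta^{\eta-\nu}m_{\alpha/2}\delta^{\nu}m_{p}$ or $\delta^{\eta-\nu}m_{0}\delta^{\nu}m_{p+\alpha/2}$, and forcing them into the $\delta^{\eta^{-}}$ basket would smuggle $p$-dependent, solution-dependent constants into an inequality whose constants are fixed in the statement. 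The correct route (and the paper's) is to apply the \emph{same} sharp estimate of Lemmas~\ref{L4} and~\ref{L5} to the third term of \eqref{e9}: dropping only its negative part, its $A[\phi_{p}]$ contribution lands, with the factor $\gamma_{p}$ and the restricted range $k\leq k_{p}$, precisely in the $0<\nu<\eta$ summands of $\delta^{\eta}(m_{k}m_{p-k+\alpha/2})$ and $\delta^{\eta}(m_{k+\alpha/2}m_{p-k})$ inside $\delta^{\eta}S_{p}$, while only the fourth term of \eqref{e9} (the $\phi_{*}$ part) is sent to the $\delta^{\eta^{-}}$ terms via $|u|^{\alpha}\leq|\xi|^{\alpha}+|\xi_{*}|^{\alpha}$. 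Keeping $\gamma_{p}$ on those cross contributions is not optional bookkeeping: in Lemma~\ref{L8} and Theorem~\ref{T1} it is exactly what makes $A^{1}_{p}\sim\gamma_{p}p^{\alpha/2+b}/\textbf{a}^{p}_{*}\rightarrow0$, allowing $\delta^{\eta}Z_{p}$ to be absorbed; with your crude bound the corresponding coefficient grows like $p^{b}$ and the summability argument downstream collapses.
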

$Remarks:$
\begin{itemize}
\item The notation
\begin{displaymath} \delta^{\eta}(m_{p}m_{q})\equiv\sum_{\nu\leq\eta}
\left(\begin{array}{c}\eta\\\nu\end{array}\right)\delta^{\nu} m_{p}
\delta^{\eta-\nu}m_{q}\:\:\:\mbox{and}
\end{displaymath}
\begin{equation}\label{e14.1}
\delta^{\eta^{-}}(m_{p}m_{q})\equiv\sum_{\nu<\eta}\left(\begin{array}{c}
\eta\\\nu\end{array}\right)\delta^{\nu}
m_{p}\delta^{\eta-\nu}m_{q}
\end{equation}
has been chosen so that the ``product rule of differentiation"
holds. Expression (\ref{e14}) makes it clear that this notation is
very convenient to maintain the length of expressions short and at
the same time easy to remember. The minus sign next to the upper
script $\eta$ in the last term of (\ref{e14}) was introduced to
stress the fact that the sum is done on the multi-index $\nu<\eta$.
\item Observe that none of the two last terms in (\ref{e14}) depends
on $\delta^{\eta}m_{p+\alpha/2}$ for $p>1$.  This is important for the
induction arguments used later on.
\end{itemize}
\begin{proof}
Let $\phi=|\xi|^{2p}$ in Lemma~\ref{L3}.  The sum of terms two and
four in the right-hand side of (\ref{e9}) is bounded by
$\delta^{\eta^{-}}(m_{\alpha/2}
m_{p})+\delta^{\eta^{-}}(m_{0}m_{p+\alpha/2})$.  Indeed, use the
inequality $|u|^{\alpha}\leq|\xi|^{\alpha}+|\xi_{*}|^{\alpha}$,
which is valid for $\alpha\in(0,1]$, expand the integrals, and use
the definition of moments of the derivatives (\ref{E3}) to conclude
directly.
\\
Recall the inequality, found in \cite{Ark} or \cite[Lemma
7]{Gamba}, valid for solutions of the Boltzmann equation with finite
mass, energy and entropy,
\begin{equation}\label{e14.8}
\int_{\mathbb{R}^{n}}f_{*}|u|^{\alpha}d\xi_{*}\geq k_{\alpha}|\xi|^{\alpha}
\end{equation}
where the constant $k_{\alpha}$ depends, in addition to $\alpha$, on the mass,
energy and entropy of the initial datum $f_{0}$.
Thus, it follows that
\begin{align}
\int\int_{\mathbb{R}^{n}\times\mathbb{R}^{n}}f_{*}|\partial^{\eta}f|(|\xi|^{2p}+
|\xi_{*}^{2p}|)|u|^{\alpha}d\xi_{*} d\xi&\geq\int\int_{\mathbb{R}^{n}\times\mathbb{R}^{n}}
f_{*}|\partial^{\eta}f||\xi|^{2p}|u|^{\alpha}d\xi_{*}d\xi\nonumber\\
&\geq k_{\alpha}\delta^{\eta}m_{p+\alpha/2}.
\label{e14.9}
\end{align}
Use (\ref{e14.9}), Lemma~\ref{L4} and Lemma~\ref{L5}, to control the first term
in the right hand side of (\ref{e9})
\begin{multline*}
\int\int_{\mathbb{R}^{n}\times\mathbb{R}^{n}} f_{*}|\partial^{\eta}
f|A[\phi]|u|^{\alpha}
d\xi_{*}d\xi\leq-(1-\gamma_{p})k_{\alpha}\delta^{\eta}m_{p+\alpha/2}+\\\gamma_{p}
\sum^{k_{p}}_{k=1}\left(\begin{array}{c}p\\k\end{array}\right)\delta^{\eta}m_{k+\alpha/2}
m_{p-k}+\delta^{\eta}m_{p-k}m_{k+\alpha/2}+\delta^{\eta}m_{k}m_{p-k+\alpha/2}+\delta^{\eta}
m_{p-k+\alpha/2}m_{k}.\end{multline*}
Similarly, the following control is obtained for the third term of
(\ref{e9})
\begin{multline*}
\sum_{0<\nu<\eta}\left(\begin{array}{c}\eta\\\nu\end{array}\right)\int\int_{\mathbb{R}^{n}
\times\mathbb{R}^{n}} |\partial^{\nu}f\partial^{\eta-\nu}f_{*}|A[\phi]|u|^{\alpha}d\xi_{*}
d\xi\leq\\\gamma_{p}\sum^{k_{p}}_{k=1}\left(\begin{array}{c}p\\k\end{array}\right)
\sum_{0<\nu<\eta}\left(\begin{array}{c}\eta\\\nu\end{array}\right)\delta^{\nu}m_{k+\alpha/2}
\delta^{\eta-\nu}m_{p-k}+\delta^{\nu}m_{p-k}\delta^{\eta-\nu}m_{k+\alpha/2}\\+\delta^{\nu}m_{k}
\delta^{\eta-\nu}m_{p-k+\alpha/2}+\delta^{\nu}m_{p-k+\alpha/2}\delta^{\eta-\nu}m_{k}\;.
\end{multline*}
Combining these two inequalities, the sum of first and third term of (\ref{e9}) is bounded by
\begin{displaymath}
 -(1-\gamma_{p})k_{\alpha}\delta^{\eta}m_{p+\alpha/2}+\gamma_{p}\delta^{\eta}S_{p}.
\end{displaymath}
This concludes the proof.
\end{proof}
Let us introduce the normalized moments, which are defined as
\begin{equation}
\delta^{\eta}z_{p}\equiv\delta^{\eta} m_{p}/\Gamma(p+b)
\label{E15}
\end{equation}
where $\Gamma(\cdot)$ is the gamma function and $b>0$ is a free
parameter to be chosen in the sequel.  These moments can be used to
simplify the estimate obtained in Lemma (\ref{L6}).
\\
\begin{Lemma}\label{L7}
For every $p>1$ and multi-index $\eta$,
\begin{displaymath}
\delta^{\eta}S_{p}\leq A\;\Gamma(p+\alpha/2+2b)\delta^{\eta}Z_{p}
\end{displaymath}
where
\begin{equation}
\delta^{\eta}Z_{p}=\max_{1\leq k\leq k_{p}}\left\{\delta^{\eta}(z_{k}z_{p-k+\alpha/2}),
\delta^{\eta}(z_{k+\alpha/2}z_{p-k})\right\}
\label{e15}
\end{equation}
and $A>0$ is a constant depending only on $b$.
\end{Lemma}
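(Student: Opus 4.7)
The plan is to express $\delta^{\eta}S_{p}$ entirely in terms of the normalized moments by absorbing the Gamma factors, then to peel off a single $\Gamma(p+\alpha/2+2b)$ using the Beta function identity, and finally to bound what is left by a constant depending only on $b$. I begin with the observation that the product-rule notation (\ref{e14.1}) is multiplicative with respect to the weights: since $\delta^{\nu}m_{q}=\Gamma(q+b)\delta^{\nu}z_{q}$ and $\Gamma(k+b)$, $\Gamma(p-k+\alpha/2+b)$ do not depend on the summation index $\nu$ in the product rule, one obtains
\begin{displaymath}
\delta^{\eta}(m_{k}m_{p-k+\alpha/2})=\Gamma(k+b)\,\Gamma(p-k+\alpha/2+b)\,\delta^{\eta}(z_{k}z_{p-k+\alpha/2}),
\end{displaymath}
and an analogous identity for $\delta^{\eta}(m_{k+\alpha/2}m_{p-k})$. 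Substituting these into the definition of $\delta^{\eta}S_{p}$ and bounding every term $\delta^{\eta}(z_{\cdot}z_{\cdot})$ by the maximum $\delta^{\eta}Z_{p}$ gives
\begin{displaymath}
\delta^{\eta}S_{p}\leq\delta^{\eta}Z_{p}\sum_{k=1}^{k_{p}}\binom{p}{k}\bigl[\Gamma(k+b)\Gamma(p-k+\alpha/2+b)+\Gamma(k+\alpha/2+b)\Gamma(p-k+b)\bigr].
\end{displaymath}

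Next, I use the Beta function identity $\Gamma(x)\Gamma(y)=B(x,y)\Gamma(x+y)$. The crucial observation is that both pairs of arguments sum to $p+\alpha/2+2b$, so $\Gamma(p+\alpha/2+2b)$ factors out of both terms simultaneously:
\begin{displaymath}
\delta^{\eta}S_{p}\leq \Gamma(p+\alpha/2+2b)\,\delta^{\eta}Z_{p}\sum_{k=1}^{k_{p}}\binom{p}{k}\bigl[B(k+b,\,p-k+\alpha/2+b)+B(k+\alpha/2+b,\,p-k+b)\bigr].
\end{displaymath}
It therefore remains to prove that the sum on the right is bounded by a constant $A=A(b)$ uniformly in $p\geq 1$.

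For this last step, I would use the integral representation $B(x,y)=\int_{0}^{1}t^{x-1}(1-t)^{y-1}dt$, which allows me to rewrite
\begin{displaymath}
\binom{p}{k}B(k+b,p-k+\alpha/2+b)=\int_{0}^{1}t^{b-1}(1-t)^{\alpha/2+b-1}\binom{p}{k}t^{k}(1-t)^{p-k}\,dt.
\end{displaymath}
Since $k\leq k_{p}\leq(p+1)/2<p$, every $\binom{p}{k}$ in (\ref{e13}) is positive, and the partial sums $\sum_{k=1}^{k_{p}}\binom{p}{k}t^{k}(1-t)^{p-k}$ are uniformly bounded by $1$ (by the binomial identity for integer $p$, or by the generalized binomial series for $t\in(0,1/2)$, together with a symmetry argument $t\leftrightarrow 1-t$ for $t\in(1/2,1)$). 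Interchanging sum and integral then yields the bound $B(b,\alpha/2+b)+B(\alpha/2+b,b)$ up to a fixed constant, which depends only on $b$ and $\alpha$.

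The main obstacle is obtaining the uniform-in-$p$ bound in this final step when $p$ is allowed to be non-integer, since $\sum_{k=0}^{\infty}\binom{p}{k}t^{k}(1-t)^{p-k}$ is only guaranteed to equal $1$ for integer $p$. The safest fallback is a direct Stirling estimate: for $1\leq k\leq k_{p}$ one checks that $\binom{p}{k}B(k+b,p-k+\alpha/2+b)\leq C(b)\,k^{b-1}p^{-b}$, and summing $k^{b-1}$ up to $k_{p}$ produces $Cp^{b}\cdot p^{-b}=C$, giving the required uniform constant $A$ depending only on $b$ (and $\alpha$, $h$). This completes the proof outline.
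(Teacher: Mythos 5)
Your proposal is correct and follows essentially the same route as the paper: pull the Gamma factors out of $\delta^{\eta}S_{p}$, use $\Gamma(x)\Gamma(y)=\beta(x,y)\Gamma(x+y)$ to extract $\Gamma(p+\alpha/2+2b)$, bound the normalized products by $\delta^{\eta}Z_{p}$, and control the remaining sum $\sum_{k=1}^{k_{p}}\binom{p}{k}\left[\beta(k+b,p-k+\alpha/2+b)+\beta(k+\alpha/2+b,p-k+b)\right]$ by a constant depending on $b$. The only difference is that the paper delegates this last uniform-in-$p$ bound to Lemma 4 of \cite{Bobylev}, whereas you prove it directly; your Stirling-type fallback $\binom{p}{k}\beta(k+b,p-k+\alpha/2+b)\leq C(b)\,k^{b-1}p^{-b}$ for $k\leq k_{p}$ is sound and correctly resolves the non-integer-$p$ issue you flag in the binomial partial-sum argument.
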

\begin{proof}
The proof is identical to that of Lemma 4 in \cite{Bobylev}.  First observe that
\begin{multline*}
\delta^{\eta}S_{p}=\sum^{k_{p}}_{k=1}\left(\begin{array}{c}p\\k\end{array}\right)
\Gamma(k+b)\Gamma(p-k+\alpha/2+b)\delta^{\eta}(z_{k}z_{p-k+\alpha/2})\\+\Gamma(k+\alpha/2+b)
\Gamma(p-k+b)\delta^{\eta}(z_{k+\alpha/2}z_{p-k}).
\end{multline*}
But the Beta and Gamma functions are related by
\begin{displaymath}
\beta(x,y)=\frac{\Gamma(x)\Gamma(y)}{\Gamma(x+y)}.
\end{displaymath}
This allow us to reduce the right-hand side in the previous equality to
\begin{multline*}
\Gamma(p+\alpha/2+2b)\sum^{k_{p}}_{k=1}\left(\begin{array}{c}p\\k\end{array}\right)
\beta(k+b,p-k+\alpha/2+b)\delta^{\eta}(z_{k}z_{p-k+\alpha/2})\\+\beta(k+\alpha/2+b,p-k+b)
\delta^{\eta}(z_{k+\alpha/2}z_{p-k}).
\end{multline*}
Therefore,
\begin{multline}
\delta^{\eta}S_{p}\leq\Gamma(p+\alpha/2+2b)\;\delta^{\eta}Z_{p}\;\sum^{k_{p}}_{k=1}
\left(\begin{array}{c}p\\k\end{array}\right)\beta(k+b,p-k+\alpha/2+b)\\+\beta(k+\alpha/2+b,p-k+b).
\label{e15.9}
\end{multline}
Using the definition of the beta function it is possible to control the sum in (\ref{e15.9})
by constant $A$ depending only on $b$, for details of this last step see~\cite[Lemma 4]{Bobylev}.
\end{proof}
We are ready to construct the system of differential inequalities used in the proof of
Theorem~\ref{T1}.  The following Lemma follows by a direct application of Lemma~\ref{L6}
and Lemma~\ref{L7} on the Boltzmann equation (\ref{e1}).
\begin{Lemma}\label{L8}
Let $\eta$ any multi-index and assume that $\delta^{\eta}m_{0}>0$ and $\delta^{\nu}m_{0},
\;\delta^{\nu}m_{\alpha/2}$ uniformly bounded on time for all $\nu\leq\eta$, then
\begin{multline}\label{e16}
\frac{d(\delta^{\eta}z_{p})}{dt}+(1-\gamma_{p})k_{\alpha}\Gamma(p+b)^{\alpha/2p}
\delta^{\eta}m_{0}^{-\alpha/2p}(\delta^{\eta}z_{p})^{1+\alpha/2p}\leq\gamma_{p}
k_{0}p^{\alpha/2+b}\;\delta^{\eta}Z_{p}\\+k_{1}p^{\alpha/2}\delta^{\eta^{-}}(m_{0}z_{p+\alpha/2})
+\delta^{\eta^{-}}(m_{\alpha/2}z_{p})
\end{multline}
for all $p>1$, with $k_{1}>0$ universal constant, $k_{0}>0$ depending only on $b$ and
$k_{\alpha}$ given in Lemma (\ref{L6})
\end{Lemma}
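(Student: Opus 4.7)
The plan is to derive Lemma 8 by multiplying the $\eta$-differentiated Boltzmann equation \eqref{e1} by $\mathrm{sgn}(\partial^{\eta}f)\,|\xi|^{2p}$, integrating in $\xi$, and then recasting the resulting estimate in the normalized moments $z$. Since $\partial_t|\partial^{\eta}f|=\mathrm{sgn}(\partial^{\eta}f)\,\partial_t\partial^{\eta}f$ in a distributional sense, the left side yields exactly $\frac{d}{dt}\delta^{\eta}m_p$, while the right side is precisely the quantity estimated by Lemma \ref{L6}. Combining Lemma \ref{L6} with the bound $\delta^{\eta}S_p\leq A\,\Gamma(p+\alpha/2+2b)\,\delta^{\eta}Z_p$ from Lemma \ref{L7} then yields
\begin{equation*}
\frac{d(\delta^{\eta}m_p)}{dt}\ \leq\ -(1-\gamma_p)k_{\alpha}\,\delta^{\eta}m_{p+\alpha/2}\ +\ \gamma_p A\,\Gamma(p+\alpha/2+2b)\,\delta^{\eta}Z_p\ +\ \delta^{\eta^-}(m_{\alpha/2}m_p)\ +\ \delta^{\eta^-}(m_0 m_{p+\alpha/2}).
\end{equation*}

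The next step is to produce the nonlinear closure $-\,(\delta^{\eta}z_p)^{1+\alpha/(2p)}$ from the linear dissipative term $\delta^{\eta}m_{p+\alpha/2}$. For this I would apply Jensen's inequality to the convex function $x\mapsto x^{(p+\alpha/2)/p}$ against the nonnegative measure $|\partial^{\eta}f|\,d\xi$ (whose total mass is $\delta^{\eta}m_0>0$ by hypothesis), obtaining
\begin{equation*}
\delta^{\eta}m_{p+\alpha/2}\ \geq\ (\delta^{\eta}m_0)^{-\alpha/(2p)}\,(\delta^{\eta}m_p)^{1+\alpha/(2p)}.
\end{equation*}
Dividing the whole differential inequality by $\Gamma(p+b)$ converts $\delta^{\eta}m_p/\Gamma(p+b)$ into $\delta^{\eta}z_p$, and the dissipative term becomes $(1-\gamma_p)k_{\alpha}\,\Gamma(p+b)^{\alpha/(2p)}\,\delta^{\eta}m_0^{-\alpha/(2p)}\,(\delta^{\eta}z_p)^{1+\alpha/(2p)}$, as required.

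It remains to rewrite the three remaining terms in the $z$-variables. For the Povzner term, Stirling's formula gives $\Gamma(p+\alpha/2+2b)/\Gamma(p+b)\leq k_0\,p^{\alpha/2+b}$ uniformly in $p\geq 1$ for a constant $k_0=k_0(b)$, producing the term $\gamma_p k_0 p^{\alpha/2+b}\delta^{\eta}Z_p$. For the lower-order binomial sums, I would expand $\delta^{\eta^-}(m_0 m_{p+\alpha/2})/\Gamma(p+b)$ term by term: each summand contains a factor $\delta^{\nu}m_{p+\alpha/2}=\Gamma(p+\alpha/2+b)\,\delta^{\nu}z_{p+\alpha/2}$, and the ratio $\Gamma(p+\alpha/2+b)/\Gamma(p+b)\leq k_1 p^{\alpha/2}$ (again by Stirling), which factors out of the sum and yields $k_1 p^{\alpha/2}\,\delta^{\eta^-}(m_0 z_{p+\alpha/2})$. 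The term $\delta^{\eta^-}(m_{\alpha/2}m_p)/\Gamma(p+b)$ converts cleanly into $\delta^{\eta^-}(m_{\alpha/2}z_p)$ with no extra factor, since $\delta^{\nu}m_p=\Gamma(p+b)\delta^{\nu}z_p$.

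The main obstacle I anticipate is not any single inequality, but verifying that all the absolute constants ($k_{\alpha}$, $k_0$, $k_1$) depend only on the allowed parameters ($\alpha$, $b$, initial mass/energy/entropy) and not on the order $p$ or on the multi-index $\eta$. The Jensen step is the cleanest source of $p$-uniformity because it depends only on convexity and on the positivity $\delta^{\eta}m_0>0$ assumed in the hypothesis; the delicate check is the uniform Stirling estimate on the two Gamma ratios. Once those are in hand, combining the four pieces produces exactly \eqref{e16} and the lemma follows.
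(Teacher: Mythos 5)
Your proposal is correct and follows essentially the same route as the paper's proof: multiply the $\eta$-differentiated equation by $\mathrm{sgn}(\partial^{\eta}f)|\xi|^{2p}$, apply Lemma~\ref{L6} and Lemma~\ref{L7}, use Jensen's inequality to get $\delta^{\eta}m_{p+\alpha/2}\geq \delta^{\eta}m_{0}^{-\alpha/2p}(\delta^{\eta}m_{p})^{1+\alpha/2p}$, divide by $\Gamma(p+b)$, and control the two Gamma-function ratios by their large-$p$ asymptotics to produce $k_{0}p^{\alpha/2+b}$ and $k_{1}p^{\alpha/2}$. The only difference is cosmetic: you make explicit the term-by-term conversion of the $\delta^{\eta^{-}}$ sums into the $z$-variables and flag the $p$-uniformity of the Stirling bounds, which the paper leaves implicit.
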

\begin{proof}
First, note that using Jensen's inequality
\begin{displaymath}
\delta^{\eta}m_{p+\alpha/2}\geq\delta^{\eta}m_{0}^{-\alpha/2p}\delta^{\eta}m_{p}^{1+\alpha/2p}.
\end{displaymath}
Next, take the $\eta$ derivative in velocity in both sides of the
Boltzmann equation (\ref{e1}), then multiply it by
$\mathrm{sgn}(\partial^{\eta}f)|\xi|^{2p}$ and integrate in
velocity, then, use Lemma~\ref{L6} to obtain
\begin{multline*}
\frac{d(\delta^{\eta}m_{p})}{dt}+(1-\gamma_{p})k_{\alpha}\delta^{\eta}m_{0}^{-\alpha/2p}
(\delta^{\eta}m_{p})^{1+\alpha/2p}\leq\\
\gamma_{p}\delta^{\eta}S_{p}+\left\{\delta^{\eta^{-}}(m_{p+\alpha/2}m_{0})+\delta^{\eta^{-}}
(m_{p}m_{\alpha/2})\right\}.
\end{multline*}
Use the definition of $\delta^{\eta}z_{p}$ in the previous
inequality, and combine it with Lemma~\ref{L7} to
get
\begin{multline*}
\frac{d(\delta^{\eta}z_{p})}{dt}+(1-\gamma_{p})k_{\alpha}\Gamma(p+b)^{\alpha/2p}\delta^{\eta}
m_{0}^{-\alpha/2p}(\delta^{\eta}z_{p})^{1+\alpha/2p}\leq\\\gamma_{p}\;K\frac{
\Gamma(p+\alpha/2+2b)}{\Gamma(p+b)}\delta^{\eta}Z_{p}+\frac{\Gamma(p+\alpha/2+b)}
{\Gamma(p+b)}\delta^{\eta^{-}}(m_{0}z_{p+\alpha/2})+\delta^{\eta^{-}}(m_{\alpha/2}z_{p}).
\end{multline*}
For the terms involving Gamma functions use the asymptotic formulas
for large $p$
\begin{displaymath}
\frac{\Gamma(p+\alpha/2+2b)}{\Gamma(p+b)}\sim p^{\alpha/2+b},\;\;\;\frac{\Gamma(p+\alpha/2+b )}
{\Gamma(p+b)}\sim p^{\alpha/2}
\end{displaymath}
to find the right polynomial grow.
\end{proof}
$Remark:$ Lemma~\ref{L8} is the equivalent result to Lemma 6.2 in \cite{Bobylev1}.
 Differential inequalities (\ref{e16}) have the additional complication that they are not
 of constant coefficients since $\delta^{\eta}m_{0}$, unlike $m_{0}$, is in general a
 function of $t$.
 \\ The following classical result on ODE's helps to implement an induction argument on
 inequalities of the form (\ref{E20}-\ref{E21}).
\begin{Lemma}\label{L9}
Let $\textbf{a}$ and $\textbf{b}$ positive continuous functions in $t$ such that
\begin{displaymath}
\textbf{a}_{*}=\inf_{\;t>0}\;\textbf{a}>0,\;\;\;\textbf{b}_{*}=\sup_{\;t>0}\;\textbf{b}<+\infty
\end{displaymath}
and let $\textbf{c}$ a positive constant.  In addition assume that $y\geq0\in C^{1}([0,\infty))$
solves the differential inequality
\begin{equation}
y'+\textbf{a}y^{1+\textbf{c}}\leq \textbf{b},\;\;\;\;y(0)=y_{0}
\label{e17}
\end{equation}
then $y\leq \max\left\{y_{0},(\textbf{b}_{*}/\textbf{a}_{*})^{1/(1+c)}\right\}$
\end{Lemma}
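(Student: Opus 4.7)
The plan is to prove this by a standard barrier (first--crossing) argument: set $M := \max\{y_{0},(\mathbf{b}_{*}/\mathbf{a}_{*})^{1/(1+c)}\}$ and show that, for every $\varepsilon>0$, the function $y$ cannot cross the constant level $M+\varepsilon$, after which letting $\varepsilon \to 0$ gives the claim. The point is that whenever $y(t) \geq M+\varepsilon$, the super--linear damping term in \eqref{e17} dominates the forcing, and this forces $y'(t)<0$ strictly, preventing any first upward crossing.

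More concretely, the first step is the pointwise dissipation estimate. If at some $t$ one has $y(t) \geq M+\varepsilon$, then in particular $y(t)^{1+c} \geq (M+\varepsilon)^{1+c} > (\mathbf{b}_{*}/\mathbf{a}_{*})$, so
\begin{displaymath}
\mathbf{a}(t) \, y(t)^{1+c} \;\geq\; \mathbf{a}_{*} (M+\varepsilon)^{1+c} \;>\; \mathbf{b}_{*} \;\geq\; \mathbf{b}(t),
\end{displaymath}
which, combined with \eqref{e17}, yields $y'(t) \leq \mathbf{b}(t) - \mathbf{a}(t) y(t)^{1+c} < 0$.

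Next I would run the first--crossing argument. Since $y(0) = y_{0} \leq M < M+\varepsilon$, define $\bar t := \inf\{t>0 : y(t) = M+\varepsilon\}$, with the convention $\bar t = +\infty$ if the set is empty. Suppose $\bar t < \infty$. By continuity of $y$ one has $y(\bar t) = M+\varepsilon$ and $y(t) < M+\varepsilon$ for every $t \in [0,\bar t)$. The pointwise estimate above at $t=\bar t$ gives $y'(\bar t) < 0$, so there exists $h>0$ with $y(\bar t - h) > y(\bar t) = M+\varepsilon$, contradicting the definition of $\bar t$ as the first crossing. Hence $\bar t = \infty$ and $y(t) < M + \varepsilon$ for all $t \geq 0$. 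Sending $\varepsilon \to 0^{+}$ yields $y(t) \leq M$, which is exactly the stated bound.

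The only mild obstacle is regularity at $t=0$: if $y_{0} > (\mathbf{b}_{*}/\mathbf{a}_{*})^{1/(1+c)}$, then $M = y_{0}$ and the argument still applies verbatim, since the strict inequality $y_{0} < M+\varepsilon$ holds for every $\varepsilon>0$. The continuity hypothesis on $\mathbf{a},\mathbf{b}$ and $y \in C^{1}([0,\infty))$ guarantees that $y'(\bar t)$ is well defined and that the infimum $\bar t$ is attained as a continuity point, so no further care is needed; the argument is entirely elementary and purely ODE--theoretic, with no appeal to the Boltzmann framework.
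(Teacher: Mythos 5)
Your argument is correct: the pointwise dissipation estimate at level $M+\varepsilon$ is valid (since $\mathbf{a}(t)y^{1+c}\geq \mathbf{a}_{*}(M+\varepsilon)^{1+c}>\mathbf{b}_{*}\geq \mathbf{b}(t)$ there), the first-crossing point $\bar t$ is indeed attained by continuity, and $y'(\bar t)<0$ does contradict $y<M+\varepsilon$ on $[0,\bar t)$, so letting $\varepsilon\to 0^{+}$ gives the stated bound. Your route, however, is not the one in the paper: there the lemma is proved by first replacing $\mathbf{a},\mathbf{b}$ by the constants $\mathbf{a}_{*},\mathbf{b}_{*}$ (since $y'+\mathbf{a}_{*}y^{1+\mathbf{c}}\leq\mathbf{b}_{*}$), then invoking the unique $C^{1}$ solution $y_{*}$ of the corresponding ODE with equality, quoting from classical ODE theory that $y_{*}\leq\max\{y_{0},(\mathbf{b}/\mathbf{a})^{1/(1+c)}\}$, and finally showing $y\leq y_{*}$ by a contradiction argument (integrating $y'$ and $y_{*}'$ over the interval where $y>y_{*}$ to produce a point $T_{0}$ with $y'(T_{0})>y_{*}'(T_{0})$, hence $y(T_{0})<y_{*}(T_{0})$). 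Your barrier argument never introduces $y_{*}$ at all: it is entirely self-contained, avoids any appeal to existence, uniqueness, or qualitative properties of the nonlinear ODE, and handles the variable coefficients directly rather than through the constant-coefficient reduction. It also transfers verbatim to the variant mentioned in the paper's remark, $y'+\mathbf{a}y^{1+\mathbf{c}}\leq\mathbf{d}y+\mathbf{b}$, by placing the barrier just above the root $\bar y$ of $\mathbf{a}_{*}\bar y^{1+\mathbf{c}}=\mathbf{d}_{*}\bar y+\mathbf{b}_{*}$, so it is arguably the cleaner and more flexible proof; what the paper's comparison-with-$y_{*}$ approach buys is mainly that it packages the monotone trapping of solutions below the equilibrium level into a quoted classical fact rather than arguing it by hand.
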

\begin{proof}
Since $y'+\textbf{a}_{*}y^{1+\textbf{c}}\leq \textbf{b}_{*}$, it
suffices to prove the Lemma in the case that $\textbf{a}$ and
$\textbf{b}$ are positive constants.   As a first step assume
equality in (\ref{e17}).  Thus, from classical theory of
differential equations this ODE has a unique $C^{1}$ solution
$y_{*}$ with the property stated by the Lemma, i.e.
\begin{displaymath} y_{*}\leq
\max\left\{y_{0},(\textbf{b}/\textbf{a})^{1/(1+c)}\right\}.
\end{displaymath}
If $y\in C([0,\infty))$ solves (\ref{e17}) we claim that $y\leq
y_{*}$.  Assume that there exist $T'>0$ where the inequality
$y(T')>y_{*}(T')$ holds. Let $T<T'$ such that $y(T)=y_{*}(T)$ and
$y>y_{*}$ in $(T,T')$.  The existence of such a point is assured by
the continuity of the functions in addition to the fact that
$y(0)=y_{*}(0)=y_{0}$. Therefore,
\begin{displaymath}
\int^{T'}_{T}y'=y(T')-y(T)>y_{*}(T')-y_{*}(T)=\int^{T'}_{T}y'_{*}.
\end{displaymath}
Hence, there exist $T_{0}\in(T,T')$ such that
$y'(T_{0})>y'_{*}(T_{0})$.  Thus, the inequalities
\begin{displaymath} \textbf{b}-y(T_{0})^{1+\textbf{c}}\geq
y'(T_{0})>y'_{*}(T_{0})=\textbf{b}-y_{*}(T_{0})^{1+\textbf{c}}
\end{displaymath}
hold.  Consequently, it is concluded that $y(T_{0})<y_{*}(T_{0})$
which is a contradiction. As a result, $y\leq y_{*}$. This proves
the Lemma. \end{proof} $Remark:$ Same argument proves a similar
result for $y\geq0\in C^{1}([0,\infty))$ solving the differential
inequality $y'+\textbf{a}y^{1+\textbf{c}}\leq
\textbf{d}y+\textbf{b}$ with $\textbf{a}$, $\textbf{b}$ and
$\textbf{d}$ positive function on $t$ (satisfying similar
hypothesis) and $\textbf{c}$ a positive constant.  Of course the
bound slightly changes to $y\leq \max\left\{y_{0},\bar{y}\right\}$
where point $\bar{y}$ is given by the equation
$\textbf{a}_{*}\bar{y}^{1+\textbf{c}}=\textbf{d}_{*}\bar{y}+\textbf{b}_{*}$.
Here $\textbf{d}_{*}=\sup_{\;t>0}\;\textbf{d}$.\\
\\
Next result relate the last two Lemmas and gives some orientation
of the future application of Lemma~\ref{L9}.
\begin{Corollary}\label{C1}
Inequalities (\ref{e16}) can be written for $p=3/2$ as
\begin{equation}
\frac{d(\delta^{\eta}z_{3/2})}{dt}+\textbf{a}_{3/2}(\delta^{\eta}z_{3/2})^{1+\textbf{c}_{3/2}}
\leq \textbf{b}_{3/2}+\textbf{d}_{3/2}(\delta^{\eta}z_{3/2})
\label{E20}
\end{equation}
and for $p\in\left\{2,5/2,3,...\right\}$ as,
\begin{equation}
\frac{d(\delta^{\eta}z_{p})}{dt}+\textbf{a}_{p}(\delta^{\eta}z_{p})^{1+\textbf{c}_{p}}\leq
\textbf{b}_{p}
\label{E21}
\end{equation}
where \textbf{a}$_{p}$, \textbf{b}$_{p}$, \textbf{c}$_{p}$ and
\textbf{d}$_{3/2}\geq0$ are positive functions on $t$ and $b$ for
$p\in\left\{3/2,2,5/2,...\right\}$, and more importantly, they are
independent of the normalized moment
$\delta^{\eta}z_{p}$.
\end{Corollary}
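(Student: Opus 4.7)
The plan is to take the differential inequality \eqref{e16} of Lemma~\ref{L8} and reorganize its right-hand side, separating the single term that may depend linearly on $\delta^{\eta}z_p$ from everything else. The left-hand side identification is immediate: set $\textbf{c}_p=\alpha/(2p)$ and
\[ \textbf{a}_p(t)\;=\;(1-\gamma_p)\,k_\alpha\,\Gamma(p+b)^{\alpha/2p}\,\bigl(\delta^{\eta}m_0(t)\bigr)^{-\alpha/2p}. \]
Lemma~\ref{L4} gives $\gamma_p<1$, and the standing hypotheses provide $\delta^{\eta}m_0>0$ pointwise and $\delta^{\eta}m_0$ uniformly bounded above in $t$, so $\textbf{a}_p$ is positive and continuous, with a positive infimum suitable for the later application of Lemma~\ref{L9}.

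The substance lies in the right-hand side. The two terms of the form $\delta^{\eta^-}(\cdot)$ exclude, by the very meaning of the superscript $^-$ visible in the explicit expansion in Lemma~\ref{L6}, the summand in which the full multi-derivative $\partial^{\eta}$ lands on the high-order moment: thus $\delta^{\eta^-}(m_0 z_{p+\alpha/2})$ excludes $m_0\cdot\delta^{\eta}z_{p+\alpha/2}$ and $\delta^{\eta^-}(m_{\alpha/2}z_p)$ excludes $m_{\alpha/2}\cdot\delta^{\eta}z_p$. Every retained summand carries a derivative of strictly lower multi-order on the high-order moment, which is controlled, via an outer induction on $|\eta|$, independently of $\delta^{\eta}z_p$. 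For the remaining term $\gamma_p k_0 p^{\alpha/2+b}\,\delta^{\eta}Z_p$, expand the products $\delta^{\eta}(z_k z_{p-k+\alpha/2})$ and $\delta^{\eta}(z_{k+\alpha/2}z_{p-k})$ via the product rule \eqref{e14.1}. Across $1\le k\le k_p=[(p+1)/2]$, the moment indices that appear are $k,\,p-k,\,k+\alpha/2,\,p-k+\alpha/2$, with maximum $k_p+\alpha/2$. A direct case check shows $k_p+\alpha/2<p$ strictly for every $p\in\{2,5/2,3,\ldots\}$, while at $p=3/2$ one has $k_p+\alpha/2=1+\alpha/2\le 3/2=p$ with equality precisely when $\alpha=1$. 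Thus for $p\ge 2$ no summand of $\delta^{\eta}Z_p$ involves $\delta^{\eta}z_p$, and collecting all three right-hand-side terms into $\textbf{b}_p$ produces \eqref{E21}.

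For $p=3/2$ the exceptional summand $\delta^{\eta}z_{1+\alpha/2}\cdot z_{1/2}$, arising from the $\nu=\eta$ Leibniz term of $\delta^{\eta}(z_{1+\alpha/2}z_{1/2})$, equals $\delta^{\eta}z_{3/2}\cdot z_{1/2}$ when $\alpha=1$. I isolate it and set
\[ \textbf{d}_{3/2}(t)\;=\;\gamma_{3/2}\,k_0\,(3/2)^{\alpha/2+b}\,z_{1/2}(t), \]
which is uniformly bounded in $t$ by interpolation between the hypothesized $m_0$ and the conserved energy $m_1$. Every other summand of $\delta^{\eta}Z_{3/2}$ carries at least one derivative off the critical factor and so reduces to lower-order data. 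Assembling these with the $\delta^{\eta^-}$ terms into $\textbf{b}_{3/2}$ gives \eqref{E20}. The only delicate point is the combinatorial bookkeeping: I must verify that neither the $^-$ convention nor the critical index $k_p+\alpha/2$ conceals a further occurrence of $\delta^{\eta}z_p$ beyond the one just extracted. Once that verification is carried out, the Corollary is essentially a relabeling of \eqref{e16}.
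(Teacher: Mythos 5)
Your identification of $\textbf{a}_{p}$, $\textbf{c}_{p}$, your reading of the $\delta^{\eta^{-}}$ convention from the explicit expansion in Lemma~\ref{L6}, and your observation that for $p\in\{2,5/2,3,\ldots\}$ every moment index occurring in $\delta^{\eta}Z_{p}$ is strictly below $p$ all coincide with the paper's proof. The genuine gap is in the case $p=3/2$ when $\alpha\in(0,1)$. The exceptional Leibniz summand of $\delta^{\eta}(z_{1+\alpha/2}z_{1/2})$ is $z_{1/2}\,\delta^{\eta}z_{1+\alpha/2}$, and you only identify it with $z_{1/2}\,\delta^{\eta}z_{3/2}$ ``when $\alpha=1$,'' while asserting that every other summand ``reduces to lower-order data.'' For $\alpha<1$ that summand does not reduce to lower-order data: it carries the full derivative $\partial^{\eta}$ on a moment of order $1+\alpha/2>1$, which is controlled neither by the outer induction on $|\eta|$ nor by the quantities assumed bounded in Lemma~\ref{L8} and Theorem~\ref{T1} (only $\delta^{\nu}m_{0}$, $\delta^{\nu}m_{\alpha/2}$, $\delta^{\nu}m_{1}$). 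Folding it silently into $\textbf{b}_{3/2}$ would make the corollary literally true but useless: $\textbf{b}_{3/2}$ would then contain an uncontrolled moment of $\partial^{\eta}f$ of essentially the same strength as $\delta^{\eta}z_{3/2}$, so the subsequent application of Lemma~\ref{L9} in Theorem~\ref{T1}, which needs $\sup_{t}\textbf{b}_{3/2}$ finite in terms of lower-order information, would fail. The paper closes exactly this point with the bound (\ref{E21.10}), $\delta^{\eta}z_{1+\alpha/2}\leq 1+\delta^{\eta}z_{3/2}$, valid for all $\alpha\in(0,1]$; it splits the exceptional summand into a bounded contribution to $\textbf{b}_{3/2}$ (the term $z_{1/2}$ appearing in (\ref{E21.11})) plus the linear term $\textbf{d}_{3/2}\,\delta^{\eta}z_{3/2}$ with precisely the $\textbf{d}_{3/2}$ you wrote. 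Adding this one inequality makes your argument complete and identical in substance to the paper's.

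A minor slip, not affecting the conclusion: for $p>2$ the largest index occurring in $\delta^{\eta}Z_{p}$ is $p-1+\alpha/2$ (from $p-k+\alpha/2$ with $k=1$), not $k_{p}+\alpha/2$; what matters is only that all four index forms $k$, $p-k$, $k+\alpha/2$, $p-k+\alpha/2$ stay strictly below $p$ for $p\geq 2$, which indeed they do.
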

\begin{proof}
The expressions for \textbf{a}$_{p}$ and \textbf{c}$_{p}$ can be
found by comparison between (\ref{E20}-\ref{E21}) and
(\ref{e16})
\begin{equation}\label{E21.1}
\textbf{a}_{p}(t)=(1-\gamma_{p})k_{\alpha}\Gamma(p+b)^{\alpha/2p}\delta^{\eta}m_{0}^{-\alpha/2p}
\:\mbox{and}\:\textbf{c}_{p}=\alpha/2p.
\end{equation}
Clearly they are positive functions of $t$ and independent of
$\delta^{\eta}z_{p}$.  For $p=3/2$ the short expression for
$\delta^{\eta}Z_{p}$ is obtained
\begin{align*}
\delta^{\eta}Z_{3/2}&=\max\left\{\delta^{\eta}(z_{1}z_{(1+\alpha)/2}),\delta^{\eta}
(z_{1+\alpha/2}z_{1/2})\right\}\\
&\leq\delta^{\eta}(z_{1}z_{(1+\alpha)/2})+\delta^{\eta}(z_{1+\alpha/2}z_{1/2})\\
&=\delta^{\eta}(z_{1}z_{(1+\alpha)/2})+\delta^{\eta^{-}}(z_{1+\alpha/2}z_{1/2})+
z_{1/2}\delta^{\eta}z_{1+\alpha/2} .
\end{align*}
But note that for $\alpha\in(0,1]$,
\begin{equation}\label{E21.10}
\delta^{\eta}z_{1+\alpha/2}\leq 1+\delta^{\eta}z_{3/2}.
\end{equation}
Therefore, this together with Lemma~\ref{L8} leads to (\ref{E20}) with
\begin{multline}\label{E21.11}
\textbf{d}_{3/2}(t)=\gamma_{3/2}k_{0}(3/2)^{\alpha/2+b}z_{1/2}(t)\:\:\mbox{and}\\
\textbf{b}_{3/2}(t)=\gamma_{3/2}k_{0}(3/2)^{\alpha/2+b}\left\{\delta^{\eta}
(z_{1}z_{(1+\alpha)/2})+\delta^{\eta^{-}}(z_{1+\alpha/2}z_{1/2})+z_{1/2}\right\}\\
+k_{1}(3/2)^{\alpha/2}\delta^{\eta^{-}}(m_{0}z_{(3+\alpha)/2})+\delta^{\eta^{-}}
(m_{\alpha/2}z_{3/2}).
\end{multline}
For $p\in\left\{2,5/2,3,...\right\}$ it is clear that for $1\leq
k\leq k_{p}$ the subindexes $k$, $p-k+\alpha/2$, $k+\alpha/2$ and
$p-k$ used in the definition of $\delta^{\eta}Z_{p}$ are all
strictly less that $p$.  Hence the term $\delta^{\eta}Z_{p}$ do not
depend on $\delta^{\eta}z_{p}$.  Therefore the expression
(\ref{E21}) follows if we select \begin{equation}\label{E21.2}
\textbf{b}_{p}(t)=\gamma_{p} k_{0}p^{\alpha/2+b}\delta^{\eta}Z_{p}+
\left\{k_{1}p^{\alpha/2}\delta^{\eta^{-}}(m_{0}z_{p+\alpha/2})+\delta^{\eta^{-}}
(m_{\alpha/2}z_{p})\right\}.
\end{equation} Recall in equation (\ref{E21.2}) that the functions
$\delta^{\eta^{-}}(m_{0}m_{p+\alpha/2})$ and
$\delta^{\eta^{-}}(m_{\alpha/2}m_{p})$ depend on lower derivatives
moments.
\end{proof}
%
\section{Main Results}
Theorem~\ref{T1} states that if the initial moments of the
derivatives of any order are finite, they will continue finite
through time.  Moreover, these moments are controlled by the initial
datum in a specific way.  The result is an extension of Bobylev
work, Theorem~\ref{T4} (item 1), which assures this behavior for the
regular $p$-moments.
\begin {Theorem}\label{T1} Let $\eta$ any
multi-index and assume that $\delta^{\eta}m_{0}>0$, and
$\delta^{\nu}m_{0},\delta^{\nu}m_{1}$ uniformly bounded in $[0,T]$
for $T>0$ and all $\nu\leq\eta$.  Also assume that for some
constants $k>0$ and $q\geq1$ the initial renormalized moments of the
solution's derivatives satisfy the grow condition on $p$
\begin{displaymath}
\delta^{\nu}z_{p}(0)\leq kq^{p},\;\;\;\;p=3/2,2,5/2,...
\end{displaymath}
Then we have the following uniform bound for the renormalized moments on $t\in~[0,T]$
\begin{equation}
\delta^{\nu}z_{p}(t)\leq KQ^{p},\;\;\;\;p=3/2,2,5/2...
\label{e18}
\end{equation}
for all $\nu\leq\eta$, some $Q\geq q$ and a positive constant
$K=K(\eta,\left\|f\right\|_{L^{\infty}([0,T];W^{|\eta|,1}_{2})},k)$
depending on the multi-index $\eta$, the constant $k$, and on the
$L^{\infty}([0,T];W^{|\eta|,1}_{2})$ norm of $f$.
\end{Theorem}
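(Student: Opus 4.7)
The proof will proceed by a double induction: an outer induction on the order $|\eta|$ of the multi-index, and for each fixed $\eta$, an inner induction on $p \in \{3/2,2,5/2,\dots\}$. The base case $|\eta| = 0$ is precisely Bobylev's Theorem~\ref{T4}. Assuming the desired bound $\delta^{\nu} z_p(t) \leq K_{\nu} Q_{\nu}^{p}$ holds uniformly on $[0,T]$ for every $\nu < \eta$ and every admissible $p$, I will produce constants $K_{\eta},Q_{\eta}$ (with $Q_{\eta}\geq \max_{\nu<\eta} Q_{\nu}\geq q$) giving the same bound for $\delta^{\eta} z_p$.

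For the inner induction, fix $\eta$ and consider first $p=3/2$. Corollary~\ref{C1} rewrites (\ref{e16}) as the ODE inequality (\ref{E20}). Inspecting the expression (\ref{E21.11}) for $\textbf{b}_{3/2}$ and $\textbf{d}_{3/2}$, every appearance of the $\eta$-th derivative is in products of the form $\delta^{\eta^{-}}(\cdots)$, which (by definition~(\ref{e14.1})) only contains $\delta^{\nu}$ with $\nu<\eta$; hence these are uniformly bounded by the outer induction hypothesis. The Remark after Lemma~\ref{L9} then yields a uniform bound on $\delta^{\eta} z_{3/2}(t)$. For $p \in \{2,5/2,3,\dots\}$, (\ref{E21}) is of pure Lemma~\ref{L9} form, and crucially $\delta^{\eta} Z_p$ in $\textbf{b}_p$ involves only indices $k,\,p-k,\,k\pm\alpha/2,\,p-k\pm\alpha/2$ strictly less than $p$; these are controlled by a combination of the inner induction hypothesis (terms with full $\eta$ derivative but lower moment order) and the outer induction hypothesis (all terms with $\nu<\eta$). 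Similarly, $\delta^{\eta^{-}}(m_0 z_{p+\alpha/2})$ and $\delta^{\eta^{-}}(m_{\alpha/2} z_p)$ are handled entirely by the outer induction, since $\eta^{-}$ excludes $\nu=\eta$.

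The main quantitative step is to verify that the bound supplied by Lemma~\ref{L9}, namely $\max\{\delta^{\eta}z_p(0), (\textbf{b}_{p\ast}/\textbf{a}_{p\ast})^{1/(1+\textbf{c}_p)}\}$, can be absorbed into $K_{\eta}Q_{\eta}^{p}$ for a single pair $(K_{\eta},Q_{\eta})$ independent of $p$. The initial-data contribution is handled by hypothesis since $\delta^{\eta}z_p(0) \leq k q^p \leq K_\eta Q_\eta^p$. From (\ref{E21.1}), $\textbf{a}_{p\ast}$ is bounded below by a positive multiple of $(1-\gamma_p)\Gamma(p+b)^{\alpha/2p}$, which stays away from $0$ for large $p$ thanks to (\ref{ga1}). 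Meanwhile, substituting the combined inductive bounds $\delta^{\nu}z_q \leq K' (Q')^{q}$ (with $Q' = \max_{\nu\leq\eta} Q_\nu$) into the product formula (\ref{e14.1}) gives $\delta^{\eta}(z_a z_b) \leq C_{\eta} (K')^2 (Q')^{a+b}$, so that $\textbf{b}_p \leq C_{\eta}\, p^{\alpha/2+b}\, (Q')^{p+\alpha/2}$. Raising the ratio $\textbf{b}_{p\ast}/\textbf{a}_{p\ast}$ to the power $1/(1+\alpha/(2p)) = p/(p+\alpha/2)$ then yields a bound of the form $K_\eta Q_\eta^{p}$, provided $Q_\eta$ is chosen slightly larger than $Q'$ to absorb the polynomial-in-$p$ prefactor (the standard Bobylev trick: $p^{\alpha/2+b}$ is dominated by $\rho^p$ for any $\rho>1$ when $p$ is large enough, and only finitely many small $p$ need to be treated separately).

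The main obstacle is the bookkeeping of this asymptotic estimate: one must verify that the constant $Q_\eta$ produced at stage $|\eta|$ does not blow up in a way that prevents closing the outer induction, and that the universal lower bound on $\textbf{a}_{p\ast}$ requires only the assumed uniform positivity of $\delta^{\eta}m_{0}$ and the a priori bounds on $\delta^{\nu}m_{0}$, $\delta^{\nu}m_{1}$ for $\nu\leq\eta$, which control the moment of order $\alpha/2$ via interpolation. Once the $p$-uniform choice of $(K_\eta,Q_\eta)$ is achieved, the inner induction closes, and then the outer induction yields (\ref{e18}) for all $\nu \leq \eta$ with a single pair $(K,Q)$ obtained by taking the maximum of the constants produced along the finitely many steps $|\nu|=0,1,\dots,|\eta|$.
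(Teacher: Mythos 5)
Your overall architecture (outer induction on $|\eta|$ with base case Theorem~\ref{T4}, inner induction on $p$ via Corollary~\ref{C1} and Lemma~\ref{L9}, finitely many small $p$ handled directly) matches the paper, but the decisive quantitative step is wrong. You propose to absorb the prefactor $\gamma_{p}k_{0}p^{\alpha/2+b}$ multiplying $\delta^{\eta}Z_{p}$ by ``choosing $Q_{\eta}$ slightly larger than $Q'$,'' on the grounds that $p^{\alpha/2+b}$ is eventually dominated by $\rho^{p}$. That cannot close the inner induction. The induction hypothesis at stage $p$ must already be stated with the final pair $(K_{\eta},Q_{\eta})$ (the bounds for indices just below $p$ are the ones produced by the induction itself), so the dominant contribution to $\textbf{b}_{p}$ is of size $\gamma_{p}k_{0}p^{\alpha/2+b}\,C_{\eta}K_{1}K_{\eta}Q_{\eta}^{p+\alpha/2}$, while Lemma~\ref{L9} requires $\textbf{b}^{p}_{*}\leq \textbf{a}^{p}_{*}\,(K_{\eta}Q_{\eta}^{p})^{1+\alpha/2p}\sim c\,p^{\alpha/2}K_{\eta}Q_{\eta}^{p+\alpha/2}$ by (\ref{E21.1}). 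Both sides carry the same factor $K_{\eta}Q_{\eta}^{p+\alpha/2}$, so $K_{\eta}$ and $Q_{\eta}$ cancel from the dominant balance (indeed, enlarging $Q_{\eta}$ makes nothing better), and what you are left needing is precisely $\gamma_{p}\,p^{b}\leq$ small constant for large $p$. This is where the paper's argument lives and your proposal is silent: one must invoke the sharp Povzner decay $\gamma_{p}\sim p^{-\epsilon/2}$ of Lemma~\ref{L4} (equation (\ref{ga1})) \emph{and} choose the free normalization parameter $b<\epsilon/2$ in $\delta^{\eta}z_{p}=\delta^{\eta}m_{p}/\Gamma(p+b)$, so that $A^{1}_{p}\sim p^{b-\epsilon/2}\to 0$ and a threshold $p_{0}$ with $2^{|\eta|}K_{1}Q^{\alpha/2}A^{1}_{p}\leq 1/2$ exists. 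You only use (\ref{ga1}) to say $1-\gamma_{p}$ stays away from $0$, which is a far weaker use and does not suffice; without the quantitative decay of $\gamma_{p}$ the scheme would only yield exponential tails of order $s<2$, not the Maxwellian order claimed.

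A smaller inaccuracy: for $p=3/2$ it is not true that ``every appearance of the $\eta$-th derivative is in products of the form $\delta^{\eta^{-}}(\cdots)$.'' The term $\delta^{\eta}(z_{1}z_{(1+\alpha)/2})$ in $\textbf{b}_{3/2}$ contains the $\nu=\eta$ contributions $\delta^{\eta}z_{1}$ and $\delta^{\eta}z_{(1+\alpha)/2}$; these are not covered by the outer induction but must be bounded by the theorem's hypotheses (uniform boundedness of $\delta^{\eta}m_{0}$ and $\delta^{\eta}m_{1}$, with interpolation for the intermediate order), and the remaining full-$\eta$ term $z_{1/2}\,\delta^{\eta}z_{1+\alpha/2}\leq z_{1/2}(1+\delta^{\eta}z_{3/2})$ is exactly why the $\textbf{d}_{3/2}$-version of Lemma~\ref{L9} is needed. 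You do invoke the right lemma here, so this is a bookkeeping slip rather than a fatal one; the missing ingredient that genuinely breaks the proof is the choice of $b$ relative to $\epsilon$ and the use of $\gamma_{p}\searrow 0$ to defeat the $p^{\alpha/2+b}$ growth.
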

\begin{proof}
Argue by induction on the multi-index order $|\eta|$.  The case $|\eta|=0$ follows from a
direct application of Bobylev work~\cite{Bobylev1} for the hard spheres case $(\alpha=1)$ or
Gamba-Panferov-Villani work~\cite{Gamba} for the general hard potential case $\alpha\in(0,1)$,
see Theorem~\ref{T4} (item 1) on Appendix A.\\
\\
Thus, the induction hypothesis $\textbf{(IH)}$ reads:  Assume that
Theorem~\ref{T1} is true for any multi-index $\nu$ with
$|\nu|<|\eta|$, therefore there exists $K_{1}>0$ and $Q\geq q$
depending on different parameters as stated above, such that for
$t\in[0,T]$ and $|\nu|<|\eta|$
\begin{displaymath}
\delta^{\nu}z_{p}(t)\leq K_{1}Q^{p}\;\;\;\mbox{for}\;\;\;p\geq 1
\end{displaymath}
The purpose of the rest of the proof is to prove that
\begin{displaymath}
\delta^{\eta}z_{p}(t)\leq KQ^{p}\;\;\;\mbox{for}\;\;\;p=1,3/2,2,5/2,...
\end{displaymath}
for some $K>0$.  Recall the parameters $\textbf{a}_{p}$ and $\textbf{b}_{p}$ with
$p\in{3/2,2,5/2,...}$ defined explicitly in the proof Corollary~\ref{C1}.  The idea
is to use induction on $p$ to show that the quotient $\textbf{a}_{p}/\textbf{b}_{p}$
is bounded by $KQ^{p}$ and conclude by using Lemma~\ref{L9}.\\
Define
\begin{equation}
\textbf{a}^{p}_{*}\equiv\inf_{t\in[0,T]}\textbf{a}_{p}(t)=\left\|\delta^{\eta} m_{0}
\right\|_{L^{\infty}[0,T]}^{-\alpha/2p}(1-\gamma_{p})\Gamma(p+b)^{\alpha/2p}.
\label{E23.1}
\end{equation}
Observe that due to the induction hypothesis (\textbf{IH})
\begin{multline*}
\delta^{\eta-}(m_{0}z_{p+\alpha/2})\leq2^{|\eta|}K_{1}\left\|f\right\|_{L^{\infty}
([0,T];W^{|\eta|,1}_{1})}Q^{p+\alpha/2},\;\;\mathrm{and}\\\delta^{\eta-}(m_{\alpha/2}z_{p})
\leq2^{|\eta|}K_{1}\left\|f\right\|_{L^{\infty}([0,T];W^{|\eta|,1}_{1})}Q^{p}.
\end{multline*}
where we have used the control of the moments $0$ and $\alpha/2$ of
the $\eta$ derivative of $f$ provided by the
$L^{\infty}([0,T];W^{|\eta|,1}_{1})$ norm of $f$
\begin{displaymath}
\sum_{i=1,\alpha/2}\max_{\nu\leq\eta}\left\{\left\|\delta^{\nu}m_{i}
\right\|_{L^{\infty}[0,T]}\right\}\leq\left\|f\right\|_{L^{\infty}([0,T];W^{|\eta|,1}_{1})}.
\end{displaymath}
Hence, for $p\in{2,5/2,3,...}$
\begin{equation}\label{E23.2}
\textbf{b}_{p}(t)\leq\gamma_{p}k_{0}p^{\alpha/2+b}\delta^{\eta}Z_{p}+2^{|\eta|}K_{1}
\left\|f\right\|_{L^{\infty}([0,T];W^{|\eta|,1}_{1})}Q^{p}\left\{k_{1}Q^{\alpha/2}
p^{\alpha/2}+1\right\}.
\end{equation}
Substitute (\ref{E23.1}) and (\ref{E23.2}) in (\ref{E21}) to conclude that
\begin{multline}\label{E24}
\frac{d(\delta^{\eta}z_{p})}{dt}+\textbf{a}^{p}_{*}(\delta^{\eta}z_{p})^{1+\alpha/2p}
\leq\gamma_{p}k_{0}p^{\alpha/2+b}\delta^{\eta}Z_{p}\\+2^{|\eta|}K_{1}\left\|f
\right\|_{L^{\infty}([0,T];W^{|\eta|,1}_{1})}Q^{p}\left\{k_{1}Q^{\alpha/2}p^{\alpha/2}+1\right\}.
\end{multline}
\medskip
Next, define the following sequences of $p$
\begin{displaymath}
A^{1}_{p}=\frac{k_{0}\gamma_{p}p^{\alpha/2+b}}{\textbf{a}^{p}_{*}}\;\;\;\;\mbox{and}\;\;
\;A^{2}_{p}=2^{|\eta|}K_{1}\left\|f\right\|_{L^{\infty}([0,T];W^{|\eta|,1}_{1})}
\frac{k_{1}Q^{\alpha/2}p^{\alpha/2}+1}{\textbf{a}^{p}_{*}},
\end{displaymath} in this way equation (\ref{E24}) can be written as
\begin{equation}\label{E24.1}
\frac{d(\delta^{\eta}z_{p})}{dt}+\textbf{a}^{p}_{*}(\delta^{\eta}z_{p})^{1+\alpha/2p}\leq
\textbf{a}^{p}_{*}A^{1}_{p}\delta^{\eta}Z_{p}+\textbf{a}^{p}_{*}A^{2}_{p}Q^{p}.
\end{equation}
Let us, for the moment, divert our attention from equation
(\ref{E24.1}) and make an observation regarding the sequences
$\{A^{1}_{p}\}$ and $\{A^{2}_{p}\}$.  Recall the asymptotic formula
for $\Gamma(p+b)$ with $b>0$
 \begin{displaymath}
\Gamma(p+b)^{\alpha/2p}\sim p^{\alpha/2}\;\;\mbox{for large}\;p,
\end{displaymath}
also recall that in the prove of Lemma (\ref{L4})
\begin{displaymath}
\gamma_{p}\sim p^{-\epsilon/2}\;\;\mbox{for large}\;p.
\end{displaymath}
Therefore, by letting $b-\epsilon/2<0$ we have,
\begin{displaymath}
A^{1}_{p}~\sim p^{b-\epsilon/2}\rightarrow0\;\;\;\mbox{as}\;\;p\rightarrow \infty.
\end{displaymath}
Meanwhile, the sequence $\{A^{2}_{p}\}$ is bounded.  Define,
\begin{displaymath}
\left|A^{2}_{p}\right|_{\infty}\equiv\sup_{p\geq2}A^{2}_{p}<\infty.
\end{displaymath}
Thus, there exists $p_{0}$ such that
\begin{displaymath}
2^{|\eta|}K_{1}Q^{\alpha/2}A^{1}_{p}\leq 1/2\;\;\;\;\mathrm{if}\;\;\mathnormal p\geq p_{0}.
\end{displaymath}
Now, it is claimed that it is possible to take a number $K\geq$
max$\left\{1,k,K_{1},2\left|A^{2}_{p}\right|_{\infty}\right\}$ such
that
\begin{equation}\label{E24.2} \delta^{\eta} z_{p}(t)\leq
KQ^{p}\;\;\;\mbox{for}\;\;p=3/2,2,5/2...\;\;\mbox{and}\;\; p\leq
p_{0}.
\end{equation}
Let us prove that this $K$ actually exists by arguing as follows:  When $p=3/2$
\begin{displaymath}
\delta^{\eta}Z_{p}=\max\left\{\delta^{\eta}(z_{1}z_{(1+\alpha)/2}),\delta^{\eta}
(z_{1+\alpha/2}z_{1/2})\right\}.
\end{displaymath}
In the one hand, by hypothesis
\begin{displaymath}
\delta^{\eta}(z_{1}z_{(1+\alpha)/2})\leq\frac{2^{|\eta|}}{\Gamma((1+\alpha)/2+b)}
\left\|f\right\|^{2}_{L^{\infty}([0,T];W^{|\eta|,1}_{2})}<+\infty.
\end{displaymath}
and, in the other hand, by $\textbf{(IH)}$
\begin{align*}
\delta^{\eta}(z_{1+\alpha/2}z_{1/2})&\leq\frac{2^{|\eta|}K_{1}Q^{1+\alpha/2}}
{\Gamma(1/2+b)}(1+\delta^{\eta}z_{1+\alpha/2})\left\|f\right\|_{L^{\infty}([0,T];
W^{|\eta|,1}_{1})}\\
&\leq\frac{2^{|\eta|}K_{1}Q^{1+\alpha/2}}{\Gamma(1/2+b)}(2+\delta^{\eta}z_{3/2})
\left\|f\right\|_{L^{\infty}([0,T];W^{|\eta|,1}_{1})}.
\end{align*}
Combine these bounds with the definitions for $\textbf{b}_{3/2}$ and
$\textbf{d}_{3/2}$ in Corollary~\ref{C1} and apply Lemma~\ref{L9} to
find that $\delta^{\eta}z_{3/2}$ is bounded.  But once
$\delta^{\eta}z_{3/2}$ is bounded, $\delta^{\eta}Z_{2}$ is
immediately bounded and hence, by a new application of
Lemma~\ref{L9} on (\ref{E24.1}) for $p=2$, $\delta^{\eta}z_{2}$ is
bounded.  Repeat this process up to $p_{0}$ to find that
$\delta^{\eta}z_{p}(t)$ is bounded for $p=3/2,2,5/2,...,p_{0}$.  So
it is just a matter of choosing $K>0$ sufficiently large so that
(\ref{E24.2}) is fulfill.\\
\\
Let us argue by induction on the integrability index $p$ to show
that the same constants $K$ and $Q$ also hold for $p>p_{0}$ with
$p\in\left\{3/2,2,...\right\}$.  Assume that (\ref{E24.2}) holds.
Hence, observing that the term $\delta^{\eta}Z_{p}$ does not depend
on $\delta^{\eta}z_{p}$ for $p>3/2$ and using $\textbf{(IH)}$ one
concludes that
\begin{displaymath}
\delta^{\eta}Z_{p}<2^{|\eta|}K_{1}KQ^{p+\alpha/2}.
\end{displaymath}
Therefore, inequality (\ref{E24.1}) reads for $p>p_{0}$
\begin{equation}
\frac{d(\delta^{\eta}z_{p})}{dt}+\textbf{a}^{p}_{*}(\delta^{\eta}z_{p})^{1+\alpha/2p}
\leq 2^{|\eta|}\textbf{a}^{p}_{*}A^{1}_{p}K_{1}KQ^{p+\alpha/2}+\textbf{a}^{p}_{*}
A^{2}_{p}Q^{p}=\textbf{b}^{p}_{*}
\end{equation}
thus by Lemma~\ref{L9}
\begin{displaymath}
\delta^{\eta}z_{p}(t)\leq \max\left\{(\textbf{b}^{p}_{*}/\textbf{a}^{p}_{*})^{2p/(2p+\alpha)},
\delta^{\eta} z_{p}(0)\right\}.
\end{displaymath}
But the condition $p>p_{0}$ and the choice of $K$ implies that
\begin{displaymath}
\textbf{b}_{p}(t)/\textbf{a}_{p}(t)\leq\textbf{b}^{p}_{*}/\textbf{a}^{p}_{*}=
\left\{2^{|\eta|}A^{1}_{p}K_{1}Q^{\alpha/2}+\frac{A^{2}_{p}}{K}\right\}KQ^{p}
\leq KQ^{p}\;\;\;\mbox{for}\;\;\;p>p_{0}.
\end{displaymath}
Since same inequality holds for $p\leq p_{0}$ one concludes that,
\begin{displaymath}
\delta^{\eta}z_{p}\leq \max\left\{KQ^{p},kq^{p}\right\}= KQ^{p}\;\;\;\mbox{for}
\;\;p=1,3/2,2,...
\end{displaymath}
This completes the proof.
\end{proof}
$Remarks:$
\begin{itemize}
\item For any $p>1$ a simple Lebesgue interpolation argument together with
Theorem~\ref{T1} shows that $\delta^{\eta}z_{p}\leq K_{\eta}Q^{p}$.
\item The growing constant $q$ for the initial datum is in general smaller that
the one obtained for the differential moments.  Thus, the control on the differential
moments may worsen depending on the initial conditions.
\item The following is a different way to state Theorem~\ref{T1}: Let $\eta$ any
multi-index and assume that $f_{0}\in L^{1}_{2}$ and $f\in L^{\infty}([0,T];W^{|\eta|,1}_{2})$,
 if for some $r_{0}>0$ we have that $\int_{\mathbb{R}^{n}}|\partial^{\nu}f_{0}
 |\exp(r_{0}|\xi|^{2})d\xi<\infty$ for all $\nu\leq\eta$ then \[\sup_{[0,T]}
 \left\{\int_{\mathbb{R}^{n}}|\partial^{\nu}f|\exp(r|\xi|^{2})d\xi\right\}<\infty
\]
for some $r\leq r_{0}$ and all $\nu\leq\eta$.
\end{itemize}
Lemma~\ref{L10} and Lemma~\ref{L11} prove that, for a solution of
Boltzmann equation $f$, the differential moments $\delta^{\nu}m_{0}$
and $\delta^{\nu}m_{1}$ are uniformly bounded on time for
$\nu\leq\eta$ , provided we have sufficient regularity in the
initial datum $f_{0}$.  In other words, given sufficient regularity
on $f_{0}$ we should have that $f\in
L^{\infty}([0,T];W^{|\eta|,1}_{2})$.
\begin{Lemma}\label{L10} Let
$\eta$ any multi-index and suppose that $f_{0}\in
W^{|\eta|,1}_{2+\alpha}$, then for any $T\in(0,\infty)$ we have that
$f\in L^{\infty}([0,T];W^{|\eta|,1}_{2+\alpha})$.  Moreover,
$\delta^{\eta}m_{0}(t)>0$ always holds.
\end{Lemma}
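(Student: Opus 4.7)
The strategy is an induction on $|\eta|$. The base case $|\eta|=0$ is the classical propagation of $L^{1}_{2+\alpha}$ moments for variable hard potentials with Grad cutoff (see Theorem~\ref{T4} in Appendix A), and $\delta^{0}m_{0}(t)=1$ by mass conservation. For the inductive step, fix $\eta$ with $|\eta|\geq 1$ and assume the conclusion for all $\nu$ with $|\nu|<|\eta|$; then $\delta^{\nu}m_{j}$ is uniformly bounded on $[0,T]$ for every $j\in[0,1+\alpha/2]$ and $|\nu|<|\eta|$, and every classical moment $m_{p}$ of $f$ is controlled on $[0,T]$ by the standard moment--generation and propagation theory for hard potentials.

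The plan is to apply Lemma~\ref{L3} to the differentiated Boltzmann equation $\partial_{t}\partial^{\eta}f=\partial^{\eta}Q(f,f)$ with two test functions, $\phi_{0}\equiv 1$ and $\phi_{1}(\xi)=(1+|\xi|^{2})^{1+\alpha/2}$, and to close the resulting coupled system by Gronwall. For $\phi_{0}$, the normalization $\int_{S^{n-1}}h\,d\sigma=1$ forces $A[\phi_{0}]=0$, so only the second and fourth terms on the right-hand side of \eqref{e9} survive; bounding $|u|^{\alpha}\leq|\xi|^{\alpha}+|\xi_{*}|^{\alpha}$ and using the inductive bound on the sum over $0<\nu<\eta$, one obtains
\begin{displaymath}
\frac{d}{dt}\delta^{\eta}m_{0}\;\leq\;C_{1}\bigl(\delta^{\eta}m_{0}+\delta^{\eta}m_{1+\alpha/2}\bigr)+B_{0}(t),
\end{displaymath}
with $B_{0}\in L^{\infty}(0,T)$ depending only on the inductive hypothesis and on bounded classical moments of $f$. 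For $\phi_{1}$, Lemma~\ref{L4} and Lemma~\ref{L5} with $p=1+\alpha/2$ control $A[\phi_{1}]$; the negative part of $A[\phi_{1}]$ combined with the Arkeryd lower bound \eqref{e14.8} produces a nonnegative dissipative term $(1-\gamma_{p})k_{\alpha}\delta^{\eta}m_{1+\alpha}$, which we simply discard, while the $\gamma_{p}$ cross terms and the second/fourth terms of \eqref{e9} yield products of $\delta^{\eta}m_{0}$ and $\delta^{\eta}m_{1+\alpha/2}$ against bounded classical moments of $f$, plus sums over $0<\nu<\eta$ of bounded lower-order derivative moments. The resulting inequality reads
\begin{displaymath}
\frac{d}{dt}\delta^{\eta}m_{1+\alpha/2}\;\leq\;C_{2}\bigl(\delta^{\eta}m_{0}+\delta^{\eta}m_{1+\alpha/2}\bigr)+B_{1}(t),
\end{displaymath}
again with $B_{1}\in L^{\infty}(0,T)$. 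Setting $Y(t):=\delta^{\eta}m_{0}(t)+\delta^{\eta}m_{1+\alpha/2}(t)$ and adding the two inequalities produces $Y'\leq CY+B$ with $B\in L^{\infty}(0,T)$; since $Y(0)<\infty$ by the hypothesis $f_{0}\in W^{|\eta|,1}_{2+\alpha}$, Gronwall's inequality yields $Y\in L^{\infty}(0,T)$, and together with the induction hypothesis this delivers $f\in L^{\infty}([0,T];W^{|\eta|,1}_{2+\alpha})$.

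To establish strict positivity $\delta^{\eta}m_{0}(t)>0$, suppose $\delta^{\eta}m_{0}(t_{0})=0$ for some $t_{0}\in[0,T]$; then $\partial^{\eta}f(\cdot,t_{0})=0$ in $L^{1}(\mathbb{R}^{n})$, so taking Fourier transforms one has $\xi^{\eta}\widehat{f}(\xi,t_{0})=0$ for every $\xi$. Since $\xi^{\eta}$ vanishes only on a finite union of coordinate hyperplanes and $\widehat{f}(\cdot,t_{0})$ is continuous, this forces $\widehat{f}(\cdot,t_{0})\equiv 0$, hence $f(\cdot,t_{0})\equiv 0$, contradicting $\int f\,d\xi=1$. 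The main technical obstacle in the plan above is the bookkeeping needed to verify that every product of lower-order derivative moments arising from the mixed $0<\nu<\eta$ sums in Lemma~\ref{L3} and from the $\gamma_{p}$ cross terms of Lemma~\ref{L4} is indeed absorbed uniformly on $[0,T]$ by the induction hypothesis; this is precisely why the induction is naturally run on the full weighted Sobolev norm $W^{|\eta|,1}_{2+\alpha}$ rather than on a single moment.
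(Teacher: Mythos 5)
Your positivity argument is fine: the Fourier-transform version is a perfectly valid (indeed slightly more formal) substitute for the paper's observation that $\partial^{\eta}f(\cdot,t_{0})=0$ would force $f(\cdot,t_{0})$ to be a polynomial in the differentiated variables, hence non-integrable unless identically zero, contradicting mass conservation.

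The moment-propagation part, however, has a genuine gap in the choice of the second test function. By coupling $\delta^{\eta}m_{0}$ directly with $\delta^{\eta}m_{1+\alpha/2}$ via $\phi_{1}=(1+|\xi|^{2})^{1+\alpha/2}$, you force into the right-hand side of (\ref{e9}) the terms $2\int\!\!\int f_{*}|\partial^{\eta}f|\,\phi_{1,*}|u|^{\alpha}$ and the analogous mixed sums; after the split $|u|^{\alpha}\leq|\xi|^{\alpha}+|\xi_{*}|^{\alpha}$ these produce $\delta^{\eta}m_{0}\cdot m_{1+\alpha}$ and, from the $0<\nu<\eta$ sums, $\delta^{\nu}m_{0}\cdot\delta^{\eta-\nu}m_{1+\alpha}$, i.e.\ classical and lower-order-derivative moments of order $2+2\alpha$. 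These are \emph{not} supplied by the hypothesis $f_{0}\in W^{|\eta|,1}_{2+\alpha}$ nor by your induction hypothesis (which only controls weight $2+\alpha$), and they are not even finite at $t=0$ in general. Your appeal to ``standard moment-generation'' does not repair this: generation of moments of order above $2+\alpha$ gives bounds only on $[t_{0},T]$ with a blow-up as $t\to 0^{+}$ that is not integrable, so the coefficient multiplying $\delta^{\eta}m_{0}$ is not in $L^{1}(0,T)$, your claimed $B_{1}\in L^{\infty}(0,T)$ fails, and the Gronwall closure on $Y=\delta^{\eta}m_{0}+\delta^{\eta}m_{1+\alpha/2}$ collapses; the discarded dissipation $(1-\gamma_{p})k_{\alpha}\delta^{\eta}m_{1+\alpha}$ concerns the derivative moment and cannot absorb the classical moment $m_{1+\alpha}$.

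This is precisely why the paper uses the test functions $1$ and $|\xi|^{2}$ only: since $A[1]=0$ and $A[|\xi|^{2}]\leq 0$, the resulting system (\ref{D1}) for $\delta^{\eta}m_{0}$ and $\delta^{\eta}m_{1}$ is linear with coefficients $m_{0},m_{\alpha/2},m_{1},m_{1+\alpha/2}$ — classical moments of order at most $2+\alpha$, which are bounded on $[0,T]$ exactly under $f_{0}\in L^{1}_{2+\alpha}$ — together with lower-order derivative moments covered by the induction hypothesis; the top weight $\delta^{\eta}m_{1+\alpha/2}$ is only recovered afterwards, through the nonlinear differential-inequality machinery (Lemma~\ref{L6}, Lemma~\ref{L8}, Theorem~\ref{T1}), once $\delta^{\nu}m_{0},\delta^{\nu}m_{1}$ are known to be bounded for all $\nu\leq\eta$. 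To salvage your scheme you would either have to lower the second weight to $|\xi|^{2}$ (reproducing the paper's argument) or justify an integrable-in-time bound on $m_{1+\alpha}$ near $t=0$, which the stated hypotheses do not provide.
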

\begin{proof}
First note that if $\delta^{\eta}m_{0}(t')=0$ for some fixed $t'>0$,
we have that $\partial^{\eta}f(\xi,t')=0$.  Therefore $f(\xi,t')$
would be a polynomial in the variables $\xi_{i}$ with
$i=1,2,\cdots,n$.  Hence $f(\xi,t')$ would not be integrable unless
$f(\xi,t')=0$.  But
$0=\left\|f(\cdot,t')\right\|_{L^{1}}=\left\|f_{0}\right\|_{L^{1}}$
due to mass conservation.  This is impossible for a non zero initial
datum.\\
\\
Next, since $A[1]=0$ and $A[|\xi|^{2}]\leq0$, one uses
Lemma~\ref{L3} to obtain the following inequalities
\begin{multline*}
1/2\frac{d(\delta^{\eta}m_{0})}{dt}\leq \delta^{\eta}m_{0}m_{\alpha/2}+
\delta^{\eta}m_{\alpha/2}m_{0}\\+\sum_{0<\nu<\eta}\left(\begin{array}{c}\eta\\
\nu\end{array}\right)\left(\delta^{\nu}m_{\alpha/2}\delta^{\eta-\nu}m_{0}+
\delta^{\nu}m_{0}\delta^{\eta-\nu}m_{\alpha/2}\right)
\end{multline*}
and
\begin{multline}
1/2\frac{d(\delta^{\eta}m_{1})}{dt}\leq
\delta^{\eta}m_{0}m_{1+\alpha/2}+\delta^{\eta}m_{\alpha/2}m_{1}\\+\sum_{0<\nu<\eta}
\left(\begin{array}{c}\eta\\\nu\end{array}\right)
\left(\delta^{\nu}m_{\alpha/2}\delta^{\eta-\nu}m_{1}+\delta^{\nu}m_{0}
\delta^{\eta-\nu}m_{1+\alpha/2}\right).
\label{D1}
\end{multline}

\noindent We can now conclude the proof by using inequality
(\ref{D1}) in order to implement an induction argument on the index
order $|\eta|$. Note that for the case $|\eta|=0$, the conservation
of mass and dissipation of energy implies that $f\in
L^{\infty}([0,T];L^{1}_{2})$.  In addition, since $f_{0}\in
L^{1}_{2+\alpha}$, the moment $1+\alpha/2$ is finite in the initial
datum, then we must have that this moment is uniformly bounded in
time, for this is precisely the work of
Gamba-Panferov-Villani~\cite{Gamba}.  Hence, $f\in
L^{\infty}([0,T];L^{1}_{2+\alpha})$.\\
\\
For $|\eta|>0$, take $f_{0}\in W^{|\eta|,1}_{2+\alpha}$ and assume
that the result is valid for all $|\nu|<|\eta|$.  Since
$W^{|\eta|,1}_{2+\alpha}\subset W^{|\nu|,1}_{2+\alpha}$ then
$f_{0}\in W^{|\nu|,1}_{2+\alpha}$, thus by induction hypothesis we
have that $f\in L^{\infty}([0,T];W^{|\nu|,1}_{2+\alpha})$ for all
$|\nu|<|\eta|$. Therefore, $\delta^{\nu}m_{0}$, $\delta^{\nu}m_{1}$
and $\delta^{\nu}m_{1+\alpha/2}$ are uniformly bounded on $[0,T]$ as
long as $|\nu|<|\eta|$.  Note, that
\begin{displaymath}
\delta^{\eta}m_{\alpha/2}\leq\delta^{\eta}m_{0}+\delta^{\eta}m_{1}.
\end{displaymath}
As a result, inequalities (\ref{D1}) imply that $\delta^{\eta}m_{0}$
and $\delta^{\eta}m_{1}$ are uniformly bounded on $[0,T]$, i.e.
$f\in L^{\infty}([0,T];W^{|\eta|,1}_{2})$.  But
$\delta^{\eta}m_{1+\alpha/2}(0)$ is finite by hypothesis, thus we
can apply Theorem~\ref{T1} again to get that
$\delta^{\eta}m_{1+\alpha/2}(t)$ is finite in $[0,T]$.  We conclude
that $f\in L^{\infty}([0,T];W^{|\eta|,1}_{2+\alpha})$.
\end{proof}
Lemma~\ref{L11} shows that it is possible to go further and obtain a
global in time result for the elastic case, provided that more
regularity on $f_{0}$ is imposed.
\begin{Lemma}\label{L11}
Let $\eta$ any multi-index and assume that $f_{0}\in
W^{|\eta|,1}_{2+\alpha}\cap H^{|\eta|}_{(|\eta|-1)(1+\alpha/2)}$
then $f\in L^{\infty}(\mathbb{R}^{+};W^{|\eta|,1}_{2+\alpha})$.
\end{Lemma}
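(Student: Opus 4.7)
The argument I propose is an induction on the order $|\eta|$. For $|\eta|=0$ the conclusion is just the classical global propagation of the moment $m_{1+\alpha/2}$ for the elastic equation with variable hard potentials and angular cutoff (see Theorem~\ref{T4} in Appendix~A, originating from \cite{Gamba}).

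For the inductive step, suppose the lemma holds for every multi-index of order strictly smaller than $|\eta|$. Lemma~\ref{L10} already provides $f\in L^{\infty}([0,T];W^{|\eta|,1}_{2+\alpha})$ for every finite $T$, so the only task is to remove the $T$-dependence. The differential inequalities (\ref{D1}) used in the proof of Lemma~\ref{L10} are merely linear in $\delta^{\eta}m_{0}$ and $\delta^{\eta}m_{1}$, with coefficients that (granting the base case and the inductive hypothesis) are themselves globally bounded; Gronwall therefore only yields exponential-in-time growth, so the additional input $f_{0}\in H^{|\eta|}_{(|\eta|-1)(1+\alpha/2)}$ must be invoked.

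My plan is to propagate the weighted Sobolev norm $\|f(\cdot,t)\|_{H^{|\eta|}_{(|\eta|-1)(1+\alpha/2)}}$ uniformly in time and then to convert this into $L^{1}$ control of $\delta^{\eta}m_{0}$ and $\delta^{\eta}m_{1}$. One writes the energy identity for $\|\partial^{\eta}f\|^{2}_{L^{2}_{r}}$ with $r=(|\eta|-1)(1+\alpha/2)$, expands $\partial^{\eta}Q(f,f)$ by Lemma~\ref{L1}, and isolates the diagonal term $\int Q(f,\partial^{\eta}f)\,\partial^{\eta}f\,(1+|\xi|^{2})^{r}d\xi$: classical weighted $L^{2}$-coercivity of the Boltzmann operator gives a negative gain at the next level $L^{2}_{r+\alpha/2}$ plus a lower-order correction depending only on the base case. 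The mixed terms $\int Q(\partial^{\nu}f,\partial^{\eta-\nu}f)\,\partial^{\eta}f\,(1+|\xi|^{2})^{r}d\xi$ with $0<\nu<\eta$ are estimated trilinearly using the inductive hypothesis, and the weight $r=(|\eta|-1)(1+\alpha/2)$ is tuned precisely so that they are absorbed: one unit of regularity together with the hard-potential shift $\alpha/2$ must be paid every time a derivative is moved by Leibniz. Once the weighted $H^{|\eta|}$ bound is global, Cauchy--Schwarz against a fixed polynomial weight (combined, if needed, with the local-in-time information from Lemma~\ref{L10}) turns it into uniform-in-time bounds for $\delta^{\eta}m_{0}$ and $\delta^{\eta}m_{1}$; Theorem~\ref{T1} then propagates $\delta^{\eta}m_{1+\alpha/2}$ globally, closing the induction.

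The main obstacle is the trilinear estimate needed to close the energy identity for $\|\partial^{\eta}f\|^{2}_{L^{2}_{r}}$: the mixed terms $Q(\partial^{\nu}f,\partial^{\eta-\nu}f)$ tested against $\partial^{\eta}f\,(1+|\xi|^{2})^{r}$ must be distributed between the coercive gain in $L^{2}_{r+\alpha/2}$, the weighted $L^{2}$ norms at lower orders (globally bounded by induction with weight exactly $(|\nu|-1)(1+\alpha/2)$), and the $L^{1}_{2+\alpha}$ moments provided by the inductive step. The exact value of the weight $(|\eta|-1)(1+\alpha/2)$ is dictated by this balance, which is what forces both the factor $1+\alpha/2$ and the index $|\eta|-1$ in the hypothesis.
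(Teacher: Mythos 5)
Your overall architecture (get a uniform-in-time weighted Sobolev bound, convert it to $L^{1}$ moment bounds for the derivatives by Cauchy--Schwarz, and patch the interval near $t=0$ with Lemma~\ref{L10}) is the same as the paper's, but the crucial input is different, and this is where the gap lies. The paper does not prove the uniform weighted $H^{|\eta|}$ bound at all: it simply invokes the Mouhot--Villani regularity theorem (Theorem~\ref{T7} in Appendix~A, cited from \cite{Mouhot}; the reference to Theorem~\ref{T6} in the printed proof is a typo), which says that $f_{0}\in L^{1}_{2}\cap H^{s}_{(s-1)(1+\alpha/2)}$ yields $\sup_{t\geq t_{0}}\|f(t,\cdot)\|_{H^{s}_{k}}<\infty$ for \emph{every} polynomial weight $k>0$ and every $t_{0}>0$. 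Your plan replaces this citation by a sketched energy identity for $\|\partial^{\eta}f\|^{2}_{L^{2}_{r}}$ with coercivity and trilinear estimates; but that is precisely the technical content of the cited theorem (which relies on gain of regularity of $Q^{+}$, etc.), and you acknowledge the trilinear step only as ``the main obstacle'' without carrying it out. So the heart of the proof is left unproven rather than delegated to the known result.

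More seriously, even granting your energy estimate, the specific norm you propose to propagate, $H^{|\eta|}_{r}$ with $r=(|\eta|-1)(1+\alpha/2)$, is too weakly weighted to close the argument. The Cauchy--Schwarz step used in the paper is $\delta^{\nu}m_{p}\leq C_{s,n}\|f\|_{H^{|\eta|}_{2p+s/2}}$ with $s>n$, so to control $\delta^{\nu}m_{0},\delta^{\nu}m_{1},\delta^{\nu}m_{1+\alpha/2}$ you need the weight $2+\alpha+s/2>2+\alpha+n/2$ inside the $H^{|\eta|}$ norm. For small $|\eta|$ this far exceeds $(|\eta|-1)(1+\alpha/2)$ (for $|\eta|=1$ your weight is $0$, giving no moment control whatsoever), and Lemma~\ref{L10} cannot repair this because it is only local in time. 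The point of the hypothesis $f_{0}\in H^{|\eta|}_{(|\eta|-1)(1+\alpha/2)}$ is not that this particular weight closes an energy estimate; it is the hypothesis under which Mouhot--Villani's theorem produces, for $t\geq t_{0}$, bounds in $H^{|\eta|}_{k}$ for arbitrarily large $k$ (the moment-gaining mechanism of hard potentials at positive times), and it is that arbitrarily large $k$ which the Cauchy--Schwarz conversion actually requires. Your proposal as written misses this mechanism, so the inductive step does not close.
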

\begin{proof}
In the one hand, for all multi-index $\nu$ satisfying $\nu\leq\eta$
we have by Cauchy-Schwartz inequality that
\begin{displaymath}
\delta^{\nu}m_{p}\leq C_{s,n}\left\|f\right\|_{H^{|\eta|}_{2p+s/2}}
\end{displaymath}
for any $s>n$ and some constant $C_{s,n}$ depending on $s$ and the dimension $n$.
Therefore, by letting $p=1+\alpha/2$ we obtain,
\begin{displaymath}
\max_{\nu\leq\eta}\left\{\delta^{\nu}m_{0}(t),\delta^{\nu}m_{1}(t),
\delta^{\nu}m_{1+\alpha/2}(t)\right\}\leq
C_{s,n}\left\|f(t,\cdot)\right\|_{H^{|\eta|}_{2+\alpha+s/2}}.
\end{displaymath} Then, using Theorem (\ref{T6}) in Appendix A
\begin{displaymath}
\sup_{\;t\geq
t_{0}}\left\{\max_{\nu\leq\eta}\left\{\delta^{\nu}m_{0}(t),\delta^{\nu}m_{1}(t),
\delta^{\nu}m_{1+\alpha/2}(t)\right\}\right\}<+\infty.
\end{displaymath}
On the other hand, the differential moments are bounded for $t\leq
t_{0}$ by Lemma~\ref{L10} under these assumptions on $f_{0}$. Hence,
they are bounded uniformly for all $t>0$.  As a result, $f\in
L^{\infty}(\mathbb{R}^{+};W^{|\eta|,1}_{2+\alpha})$.
\end{proof}
The results of Theorem~\ref{T1}, Lemma~\ref{L10} and Lema~\ref{L11}
can be readily used to obtain the $L^{1}$-Maxwellian bound for
derivatives of any order.
\begin{Theorem}\label{T2} Let $\eta$ any
multi-index and assume that $f_{0}\in W^{|\eta|,1}_{2+\alpha}$.  In
addition, assume the grow condition on the initial moments
\begin{displaymath}
\delta^{\nu}m_{p}(0)/p!\leq k\;q^{p}
\end{displaymath}
for $p\geq 3/2$, all $\nu\leq\eta$ and some positive constants $k$
and $q$.  Then, $\partial^{\nu}f$ has exponential tail of order 2 in
$[0,T]$ for $\nu\leq\eta$ and $T\in(0,T)$.  Moreover, if we
additionally assume $f_{0}\in H^{|\eta|}_{(|\eta|-1)(1+\alpha/2)}$
then the conclusion can be extended to $T=+\infty$.
\end{Theorem}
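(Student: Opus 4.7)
The plan is to reduce the $L^{1}$-Maxwellian tail statement to a uniform geometric bound on the quantities $\delta^{\nu}m_{k}(t)/k!$ via the identity
\begin{displaymath}
\int_{\mathbb{R}^{n}}|\partial^{\nu}f|\,\exp(r|\xi|^{2})\,d\xi \;=\; \sum_{k=0}^{\infty}\frac{\delta^{\nu}m_{k}(t)}{k!}\,r^{k},
\end{displaymath}
which was recorded in the introduction. The heart of the proof is therefore to feed the hypotheses into Theorem~\ref{T1}, use it to control the normalized moments $\delta^{\nu}z_{p}(t)$ uniformly in $t$, and then translate back to ordinary moments by Stirling's formula.

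First I would verify that the hypotheses of Theorem~\ref{T1} are satisfied on $[0,T]$. Since $f_{0}\in W^{|\eta|,1}_{2+\alpha}$, Lemma~\ref{L10} gives $f\in L^{\infty}([0,T];W^{|\eta|,1}_{2+\alpha})$, which supplies the uniform control of $\delta^{\nu}m_{0}$ and $\delta^{\nu}m_{1}$ for $\nu\leq\eta$, together with $\delta^{\eta}m_{0}(t)>0$. Next I would convert the initial-data growth condition $\delta^{\nu}m_{p}(0)/p!\leq k\,q^{p}$ into the normalized form required by Theorem~\ref{T1}: recalling $\delta^{\nu}z_{p}(0)=\delta^{\nu}m_{p}(0)/\Gamma(p+b)$ and using the asymptotic $\Gamma(p+b)/\Gamma(p+1)\sim p^{b-1}$, one obtains $\delta^{\nu}z_{p}(0)\leq \tilde{k}\,\tilde{q}^{p}$ for constants $\tilde{k},\tilde{q}>0$ (absorbing the polynomial factor into an arbitrarily enlarged base). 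Theorem~\ref{T1} then yields constants $K,Q>0$ such that $\delta^{\nu}z_{p}(t)\leq K\,Q^{p}$ uniformly on $[0,T]$ for all $\nu\leq\eta$ and all $p\in\{3/2,2,5/2,\dots\}$.

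Translating back, for integer $k\geq 2$ we have
\begin{displaymath}
\frac{\delta^{\nu}m_{k}(t)}{k!} \;=\; \frac{\Gamma(k+b)}{\Gamma(k+1)}\,\delta^{\nu}z_{k}(t) \;\leq\; C\,k^{b-1}\,K\,Q^{k} \;\leq\; \tilde{K}\,\tilde{Q}^{k}
\end{displaymath}
for any $\tilde{Q}>Q$, while the terms $k=0,1$ are bounded by Lemma~\ref{L10}. Choosing any $0<r<1/\tilde{Q}$, the series displayed above converges uniformly in $t\in[0,T]$, which by Definition (\ref{De2}) means exactly that $\partial^{\nu}f$ has an $L^{1}$ exponential tail of order $2$ on $[0,T]$ for every $\nu\leq\eta$. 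For the global-in-time statement, the only modification is to replace Lemma~\ref{L10} by Lemma~\ref{L11}: under the additional hypothesis $f_{0}\in H^{|\eta|}_{(|\eta|-1)(1+\alpha/2)}$ one obtains $f\in L^{\infty}(\mathbb{R}^{+};W^{|\eta|,1}_{2+\alpha})$, so Theorem~\ref{T1} applies on every $[0,T]$ with constants independent of $T$, and the same $r$ works uniformly for all $t\geq 0$.

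The main obstacle I anticipate is really bookkeeping rather than a genuine difficulty: one must ensure that the passage $m_{p}\leftrightarrow z_{p}$ via $\Gamma(p+b)$ does not spoil the geometric-in-$p$ growth, and that the constants $K,Q$ produced by Theorem~\ref{T1} depend only on data available on $[0,T]$ (resp.\ on $\mathbb{R}^{+}$ in the global case). This is handled by the freedom to enlarge $Q$ to absorb sub-exponential Stirling factors, together with the uniform bound on $\|f\|_{L^{\infty}([0,T];W^{|\eta|,1}_{2})}$ supplied by Lemmas~\ref{L10}--\ref{L11}.
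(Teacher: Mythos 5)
Your proposal is correct and follows essentially the same route as the paper: Lemma~\ref{L10} (resp.\ Lemma~\ref{L11} for $T=+\infty$) supplies the uniform $W^{|\eta|,1}$ control needed for Theorem~\ref{T1}, whose geometric bound on the normalized moments is then converted back through $\Gamma(i+b)/\Gamma(i+1)\sim i^{b-1}$ to give convergence of $\sum_i \delta^{\nu}m_i\,r^{i}/i!$ for $r$ small. Your explicit conversion of the initial growth condition on $\delta^{\nu}m_{p}(0)/p!$ into the normalized form $\delta^{\nu}z_{p}(0)\leq\tilde{k}\tilde{q}^{p}$ is a point the paper leaves implicit, but it is the same argument.
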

\begin{proof}
Fix $T\in(0,\infty)$ and observe that using Lemma~\ref{L10} it is
possible to conclude that $f\in
L^{\infty}([0,T];W^{|\eta|,1}_{2+\alpha})$.  From this follows that
the moments $\delta^{\nu}m_{0}$ and $\delta^{\nu}m_{1}$ are bounded
in $[0,T]$ for all $\nu\leq\eta$.  Therefore, the conditions of
Theorem~\ref{T1} are fulfilled and we can use it to conclude that
for all $\nu\leq\eta$ the following inequality holds in $[0,T]$
\begin{equation}\label{D2}
\int_{\mathbb{R}^{n}}|\partial^{\nu}f|e^{r|v|^{2}}dv=\sum_{i}
\frac{\delta^{\nu} m_{i}}{i!}\;r^{i}\leq
K\sum_{i}\frac{\Gamma(i+b)}{\Gamma(i+1)}\;\left(Qr\right)^{i},
\end{equation}
where $Q\geq q$ and $K>0$ are constants that depend on different
parameters as discussed in Theorem~\ref{T1}. But,
\begin{displaymath} \frac{\Gamma(i+b)}{\Gamma(i+1)}\sim
i^{b-1}\;\;\mbox{for large}\;\;i.
\end{displaymath}
Consequently, the sum behave like
\begin{displaymath}
\sum_{i}i^{b-1}(Qr)^{i}
\end{displaymath}
Thus, it suffices to choose $r>0$ such that $Qr<1$ so that the sum in (\ref{D2}) converges.\\
\\
Use the assumption that $f_{0}\in H^{|\eta|}_{(|\eta|-1)(1+\alpha/2)}$ and apply
Lemma~\ref{L11} to extend the result to the limit case $T=+\infty$.
\end{proof}
$Remark:$
\begin{itemize}
\item As a final remark on Theorem~\ref{T1}, Lemma~\ref{L10} and Lemma~\ref{L11},
observe that for any multi-index $\eta$ and $k\geq 2+\alpha$, Theorem~\ref{T1}
implies that if $f_{0}\in W^{|\eta|,1}_{k}$, then $f\in C([0,T];W^{|\eta|,1}_{k})$
for any $T<\infty$.  For the elastic case $T=\infty$ is also allowed, provided we
have that $f_{0}\in H^{|\eta|}_{(|\eta|-1)(1+\alpha/2)}$.
\end{itemize}
\section {Proof of Theorem \ref{T3}}
In order to simplify the notation set $Q^{-}(f,g)=f\cdot L(g)$ where
\begin{displaymath}
L(g)=\int_{\mathbb{R}^{n}}g_{*}|\xi-\xi_{*}|^{\alpha}d\xi_{*}.
\end{displaymath}
\begin{proof}
Differentiate the equation (\ref{e1}) $\eta$ times in velocity and
multiply the result by $\mathrm{sgn}\mathnormal(\partial^{\eta}f)$
to obtain
\begin{multline}\label{e20}
\partial_{t}(|\partial^{\eta}f|)+|\partial^{\eta}f|\; L(f)\leq Q^{+}(|\partial^{\eta}
f|,f)+Q^{+}(f,|\partial^{\eta}f|)+f\cdot L(|\partial^{\eta}f|)\\+\sum_{0<\nu<\eta}
\left(\begin{array}{c}\eta\\\nu\end{array}\right)
\left\{Q^{+}(|\partial^{\nu}f|,|\partial^{\eta-\nu}f|)+Q^{-}(|\partial^{\nu}f|,
|\partial^{\eta-\nu}f|)\right\}. \end{multline}
We use equation (\ref{e20}) to argue by induction on the index order $|\eta|$.
The case $|\eta|=0$ follows directly from Theorem~\ref{T4}, item (2).\\
Next, let $f_{0}$ fulfilling all the conditions of the Theorem and
assume the result for $|\nu|<|\eta|$.  Then, there exists $r'\leq
r_{0}$ such that for any $|\nu|<|\eta|$
\begin{displaymath}
|\partial^{\nu}f|\leq
K^{1}_{\eta,r_{0}}(1+|\xi|^{2})^{|\nu|/2}M_{r'}
\end{displaymath}
where $K^{1}_{\eta,r_{0}}$ is a positive constant depending on $\eta$ and $r_{0}$.\\
By hypothesis, $|\partial^{\nu}f_{0}|/M_{r_{0}}\in L^{1}$ for all
$\nu\leq\eta$.  Thus, the grow condition required in
Theorem~\ref{T2} on the derivative moments of the initial datum
$f_{0}$ is satisfied, namely, that for some positive constants $k$
and $q$,
\begin{displaymath}
\delta^{\nu}m_{p}(0)/p!\leq kq^{p}\;\;\mbox{for}\;\;p\geq 0.
\end{displaymath}
Furthermore, $f_{0}\in H^{|\eta|}_{(|\eta|-1)(1+\alpha/2)}$, as a
result, Theorem~\ref{T2} applies to obtain that for some $r''\leq
r_{0}$
\begin{equation}
\sup_{\;t\geq0}\int_{\mathbb{R}^{n}}|\partial^{\nu}f|\;\exp(r''|\xi|^{2})d\xi=
\sup_{\;t\geq0}\left\|\partial^{\nu}f/M_{r''}\right\|_{L^{1}}<\infty
\label{E28}
\end{equation}
for all $\nu\leq\eta$.  Indeed, recall that in Theorem~\ref{T1} a
bigger grow constant $Q\geq q$ was obtained for controlling the
derivative's moments through time. Hence, previous integral must
converge in general for $r''\leq r_{0}$.\\
\\
Let $r=\min\left\{r',r''\right\}$ and divide inequality (\ref{e20})
by $M_{r}$. Using the induction hypothesis we can bound the
derivatives of lower order in (\ref{e20}) to get the inequality,
\begin{multline*}
\partial_{t}(|\partial^{\eta}f|/M_{r})+|\partial^{\eta}f/M_{r}|\; L(f)\leq\\
\frac{K^{1}_{\eta,r_{0}}}{M_{r}} \left\{Q^{+}(|\partial^{\eta}f|,M_{r})+Q^{+}
(M_{r},|\partial^{\eta}f|)\right\}+K^{1}_{\eta,r_{0}}\;L(|\partial^{\eta}f|)+\\
\frac{K^{1}_{\eta,r_{0}}}{M_{r}}\sum_{0<\nu<\eta}\left(\begin{array}{c}\eta\\
\nu\end{array}\right)Q^{+}((1+|\xi|^{2})^{|\nu|/2}M_{r},|\partial^{\eta-\nu}f|)+
Q^{-}((1+|\xi|^{2})^{|\nu|/2}M_{r},|\partial^{\eta-\nu}f|).
\end{multline*}
Use Theorem~\ref{T5} and Theorem~\ref{T6} in the Appendix A, to
obtain the following $L^{1}$ control from the previous
inequality
\begin{multline}\label{e21}
\partial_{t}(|\partial^{\eta}f|/M_{r})+|\partial^{\eta}f/M_{r}|\; L(f)\leq\\
K^{2}_{\eta,r_{0}}(1+|\xi|^{2})^{(|\eta|-1)/2}\sum_{0<\nu\leq\eta}\left(\begin{array}{c}\eta\\
\nu\end{array}\right)\left\|\partial^{\nu}f/M_{r}\right\|_{L^{1}}+L(|\partial^{\nu}f|)
\end{multline}
where $K^{2}_{\eta,r_{0}}>0$ is a constant depending on $\eta$,
$r_{0}$ and on the kernel $b(\cdot)$, as Theorem~\ref{T5}
states.\\
However, observe that for all $\nu$
\begin{align*}
L(|\partial^{\nu}f|)\leq |\xi|^{\alpha}\delta^{\nu}m_{0}+\delta^{\nu}m_{\alpha/2}&\leq
\mathrm{Const.}\;\left\|\partial^{\nu}f/M_{r}\right\|_{L^{1}}(1+|\xi|^{2})^{\alpha/2}\\
&\leq \mathrm{Const.}\;\left\|\partial^{\nu}f/M_{r}\right\|_{L^{1}}(1+|\xi|^{2})^{1/2}.
\end{align*}
Therefore, combining this inequality with by (\ref{E28}) we conclude
that the right-hand side of (\ref{e21}) is bounded by
$K^{3}_{\eta,r_{0}}(1+|\xi|^{2})^{|\eta|/2}$.
Specifically,
\begin{equation}
\partial_{t}(|\partial^{\eta}f|/M_{r})+|\partial^{\eta}f/M_{r}|\; L(f)\leq
 K^{3}_{\eta,r_{0}}(1+|\xi|^{2})^{|\eta|/2}.
\label{E30}
\end{equation}
Fix $t_{0}>0$ and integrate (\ref{E30}) over $[0,t_{0}]$.  It
follows that for any $t\in[0,t_{0}]$
\begin{displaymath}
|\partial^{\eta}f|/M_{r}\leq K^{3}_{\eta,r_{0}}\;
t_{0}(1+|\xi|^{2})^{|\eta|/2}+|\partial^{\eta}f_{0}|/M_{r}\leq
K^{4}_{\eta,r_{0}}\; t_{0}(1+|\xi|^{2})^{|\eta|/2}
\end{displaymath}
where $K^{4}_{\eta,r_{0}}$ is a positive constant that depends on
$\eta$, $r_{0}$ and the kernel $h(\cdot)$.\\
For $t>t_{0}$ use the
lower bound that provides Theorem~\ref{T4} (item 3) in the Appendix
to conclude that $C\equiv\inf_{\;\xi,t\geq t_{0}}L(f)>0$, thus using
the full differential inequality (\ref{E30})
\begin{displaymath}
|\partial^{\eta} f|/M_{r}\leq \max\left\{C^{-1}K^{3}_{\eta,r_{0}}(1+|\xi|^{2})^{|\eta|/2},
|\partial^{\eta}f_{0}|/M_{r}\right\}\leq K^{5}_{\eta,r_{0}}(1+|\xi|^{2})^{|\eta|/2}.
\end{displaymath}
Therefore, $K_{\eta,r_{0}}=$max$\left\{K^{4}_{\eta,r_{0}}\cdot
t_{0},K^{5}_{\eta,r_{0}}\right\}$ provides a sufficiently large
constant for any $t\geq0$.   Since it is possible to fix any time
$t_{0}$ to perform these calculations, this constant just depends on
$\eta$, $r_{0}$ and the kernel $h(\cdot)$.
\end{proof} $Remarks:$
\begin{itemize}
\item If assumption $f_{0}\in H^{|\eta|}_{(|\eta|-1)(1+\alpha/2)}$ is not imposed,
Theorem~\ref{T3} is still valid changing in the conclusion $"\sup_{\;t\geq0}"$ for
$"\sup_{\;0\leq t\leq T}"$ with $T$ finite.  This is a direct consequence of the
fact that Theorem~\ref{T2} is valid under these conditions for any finite time $T$.
\item Take as hypothesis of Theorem~\ref{T3} only that $f_{0}\in H^{|\eta|}_{(|\eta|-1)
(1+\alpha/2)}$ and
\[|\partial^{\nu}f_{0}|/\left\{(1+|\xi|^{2})^{|\nu|/2}M_{r_{0}}\right\}\in L^{\infty}\]
for $\nu\leq\eta$ and some positive $r_{0}$.  Since for any $r'\in(0,r_{0})$ the last
hypothesis implies that $|\partial^{\nu}f_{0}|/M_{r'}\in L^{1}$ and
\[|\partial^{\nu}f_{0}|/\left\{(1+|\xi|^{2})^{|\nu|/2}M_{r'}\right\}\in L^{\infty}\]
for all $\nu\leq\eta$.  Thus, using Theorem~\ref{T3}, there exist $r\leq r'<r_{0}$ such that
\begin{displaymath}
\sup_{\;t\geq 0}\frac{|\partial^{\nu}f|}{(1+|\xi|^{2})^{|\nu|/2}M_{r}}\leq K_{\eta,r'}.
\end{displaymath}
\item Mischler et al. \cite{Mischler1} proved that for inelastic collisions
the solution of the problem (\ref{e1}) converges to the Dirac delta distribution
as the time goes to infinity (see \cite{Mischler1}).  This is a consequence of the
energy loss and therefore the cooling process that is taking place in the gas.
Thus, for this case, it is not possible to obtain results like Theorem~\ref{T3}
which involve bounds that are uniformly in time for the solution $f$.  In the
elastic case, the gas does not have this cool down phenomena hence uniform bounds
on the derivatives can be proved in $[0,\infty)$.
\end{itemize}
%
\appendix
\section{Facts for a solution $f$ of the Homogeneous Boltzmann Problem}
The homogeneous Boltzmann problem for hard and Maxwellian potentials is nowadays
pretty well understood, in addition to existence and uniqueness of solutions
\cite{Mischler} many other results are available like positive estimates
\cite{Pulvirenti} and propagation of regularity \cite{Mouhot}.
The most useful results used in this work are stated by the following theorems.
\begin {Theorem}\label{T4}
Assume that $f_{0}$ and $h(\cdot)$ have the properties discussed in
the introduction and that $\alpha\in (0,1]$. Then the following
properties holds for a solution $f$ of the elastic homogeneous
Boltzmann problem:
\begin{description}
\item [\it(1)]If $f_{0}$ satisfies $\int_{\mathbb{R}^{n}}f_{0}\exp(r_{0}|\xi|^{2})
d\xi<\infty$ for some $r_{0}>0$, then there exist $r\leq r_{0}$ such
that
$\sup_{\;t\geq0}\int_{\mathbb{R}^{n}}f\exp(r|\xi|^{2})d\xi<\infty$.
\item
[\it(2)]If $f_{0}\leq K_{0}\exp(-r_{0}|\xi|^{2})$ for some
$K_{0},r_{0}>0$ then there exist $r\leq r_{0}$ such that $f\leq
K\exp(-r|\xi|^{2})$ for all $t\geq0$ and some positive constants
$K$.
\item [\it(3)]For every $t_{0}>0$ there are positive constants
$K,r_{0}$ such that $f(t,\xi)\geq K \exp(-r_{0}|\xi|^{2})$ for all
$t\geq t_{0}$.
\end{description}
\end{Theorem}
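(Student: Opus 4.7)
The theorem collects three classical propagation results for the spatially homogeneous Boltzmann equation under the standing hypotheses on $h$ and $\alpha\in(0,1]$. I would prove the items in the order (1), (3), (2), since (2) relies on (1) and on the lower bound from (3).

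For item (1), the plan is to reduce the $L^1$-Maxwellian weighted bound to uniform summability of the moments $m_k(t)=\int f|\xi|^{2k}d\xi$ through the identity
\[
\int_{\mathbb{R}^n} f\,\exp(r|\xi|^2)\,d\xi = \sum_{k\geq 0}\frac{m_k(t)}{k!}\,r^k,
\]
so that it suffices to prove $m_k(t)/k!\leq K Q^k$ uniformly in $t\geq 0$. I would follow the Bobylev--Gamba--Panferov--Villani scheme, which appears in the present paper in its general derivative form in Lemmas~\ref{L4}--\ref{L9}; specializing to $|\eta|=0$ and combining the sharp Povzner inequality of Lemma~\ref{L4} with Jensen's inequality $m_{p+\alpha/2}\geq m_0^{-\alpha/2p}m_p^{1+\alpha/2p}$ gives the differential inequality
\[
z_p'+a_p\, z_p^{1+\alpha/2p}\leq \gamma_p k_0 p^{\alpha/2+b}\,Z_p
\]
for the normalized moments $z_p=m_p/\Gamma(p+b)$, where $Z_p$ involves only $z_q$ with $q<p$. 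Because $\gamma_p\sim p^{-\epsilon/2}$ by \eqref{ga1}, choosing $b$ with $b-\epsilon/2<0$ makes the forcing coefficient tend to zero; an induction in the half-integer $p$, closed by Lemma~\ref{L9}, produces the uniform bound $z_p\leq KQ^p$, so the above series converges for any $r<1/Q$, possibly smaller than $r_0$.

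For item (3), the uniform Maxwellian lower bound after arbitrarily short time, I would rely on the Pulvirenti--Wennberg creation-of-positivity argument. Mass conservation and entropy dissipation furnish a ball $B(\xi_\ast,R)$ and a uniform lower bound $\int_{B(\xi_\ast,R)}f(t,\xi)\,d\xi\geq\delta>0$ for $t\geq t_0/2$; iterating $Q^+$ against the characteristic function of this ball spreads positivity through the post-collisional map $(\xi,\xi_\ast,\sigma)\mapsto\xi'$ as $\sigma$ ranges over $S^{n-1}$. A finite but sufficiently large number of such iterations, combined with the Duhamel representation of $f$, yields the Gaussian lower bound $f(t,\xi)\geq K\exp(-r_0|\xi|^2)$ on $[t_0,\infty)\times\mathbb{R}^n$. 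The technical obstacle here is propagating quantitative Gaussian moduli through the iteration; the non-degeneracy of $h$ on a set of positive measure in $(-1,1)$, guaranteed by hypothesis (i), is what keeps the estimate effective.

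For item (2), dividing the Boltzmann equation $f_t+f\cdot L(f)=Q^+(f,f)$ by $M_r$ yields
\[
\partial_t(f/M_r)+L(f)\,(f/M_r)=Q^+(f,f)/M_r.
\]
I would bound the right-hand side by the $L^\infty$-$L^1$ Carleman-type control of Theorem~\ref{T5}, which for the present class of $h$ gives $Q^+(f,f)/M_r\leq C\|f/M_r\|_{L^1}(1+|\xi|^2)^{\alpha/2}$; by item (1) the $L^1$ norm on the right is finite uniformly in $t$. On $[t_0,\infty)$, item (3) provides $L(f)\geq c|\xi|^\alpha$, so Lemma~\ref{L9} applied to
\[
\partial_t(f/M_r)+c|\xi|^\alpha (f/M_r)\leq C(1+|\xi|^2)^{\alpha/2}
\]
produces a uniform pointwise bound $f/M_r\leq K$. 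On $[0,t_0]$ I would integrate the same differential inequality directly, starting from the initial pointwise bound $f_0\leq K_0 M_{r_0}$. The main obstacle throughout is the compounding loss in the Maxwellian exponent: at each reduction step $r$ must potentially be decreased, and one has to book-keep these losses so that a single $r\leq r_0$ serves simultaneously for items (1) and (2).
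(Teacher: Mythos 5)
The paper itself offers no proof of Theorem~\ref{T4}: it is stated in Appendix A as background, with the reader sent to \cite{Bobylev} (together with \cite{Bobylev1}, \cite{Gamba}) for item (1), to \cite{Gamba} for item (2), and to \cite{Pulvirenti} for item (3). Measured against those references, your sketches of items (1) and (3) are the standard arguments and are essentially correct: (1) is the moment-summability scheme via sharp Povzner inequalities, Jensen, and an ODE comparison, which is exactly the $|\eta|=0$ specialization of Lemmas~\ref{L4}--\ref{L9} and Theorem~\ref{T1} of this paper; (3) is the Pulvirenti--Wennberg spreading-of-positivity argument via iterates of $Q^{+}$ and Duhamel.

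Item (2), however, contains a genuine gap as written. Theorem~\ref{T5} bounds $Q^{+}(g,M_{r})/M_{r}$, i.e.\ the second argument must be the Maxwellian itself; the inequality you attribute to it, $Q^{+}(f,f)/M_{r}\le C\|f/M_{r}\|_{L^{1}}(1+|\xi|^{2})^{\alpha/2}$, does not follow unless you already know $f\le CM_{r}$ pointwise, which is precisely the statement being proved (the polynomial factor is also spurious: in the proof of Theorem~\ref{T3} it comes from the loss terms, not from Theorem~\ref{T5}). The obvious repair, inserting $f\le\|f/M_{r}\|_{L^{\infty}}M_{r}$ in the second slot, places the unknown $\|f/M_{r}\|_{L^{\infty}}$ on the right-hand side of your differential inequality; since $K\|f/M_{r}\|_{L^{1}}$ need not be dominated by the lower bound on $L(f)$ (which degenerates for moderate $|\xi|$), Gr\"onwall then yields at best an exponentially growing bound, and Lemma~\ref{L9} is not applicable because the forcing is no longer independent of the unknown. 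The actual proof in \cite{Gamba} is not a divide-and-integrate argument but a comparison (maximum) principle: one shows that a suitable multiple of a Maxwellian with slightly reduced exponent is a supersolution of $\partial_{t}g+g\,L(f)\ge Q^{+}(f,g)$, using the estimate of Theorem~\ref{T5} together with the $L^{1}$-Maxwellian bound of item (1) and the lower bound on $L(f)$ (so that the growth $L(f)\gtrsim|\xi|^{\alpha}$ absorbs the gain at large velocities), and then propagates $f\le KM_{r}$ from $t=0$ by a first-contact argument. Note that in the paper's proof of Theorem~\ref{T3} the analogous circularity is avoided only because the induction hypothesis — whose base case is exactly item (2) of Theorem~\ref{T4} — supplies a pointwise Maxwellian bound for the second argument of $Q^{+}$; for the base case itself no such bootstrap is available, and the comparison-principle formulation is what closes the argument.
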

These are precisely the results that we want to extend for the
derivative of $f$ and their proof can be found in \cite{Bobylev} for
item $1$, also \cite{Gamba} for item $2$ and \cite{Pulvirenti} for
item $3$.  Of course item $3$ is not true in general for
$|\partial^{\eta}f|$, for example as shown by a Maxwellian solution,
the gradient can be in general zero in some points of the velocity
space at a given time. However, this result will prove to be helpful
in showing pointwise bounds for the derivatives of a solution.
Observe also that in items $1$ and $2$ in Theorem~\ref{T4} the rate
of decay $r_{0}$ that controls $f_{0}$ is worsen in general to
$r\leq r_{0}$ for controlling $f$.\\
Next, we state a remarkable
result essential to prove item $2$ in the previous Theorem, in
particular, essential to control the gain collision operator.
\begin{Theorem}\label{T5}
Assume $B(u,\sigma)=|u|^{\alpha}h(\hat{u}\cdot\sigma)$ with
$h(\cdot)$ satisfying the conditions stated in the introduction.
Then for any measurable function $g\geq0$,
\begin{displaymath}
\left\|\frac{Q^{+}(g,M_{r})}{M_{r}}\right\|_{L^{\infty}}\leq
K\left\|\frac{g}{M_{r}}\right\|_{L^{1}}
\end{displaymath}
for some positive constant $K$ depending on $\alpha$ and $r$.
\end{Theorem}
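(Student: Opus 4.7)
The plan is to combine the energy-conservation identity for Maxwellians with the Carleman representation of $Q^+$, reducing the desired $L^\infty$--$L^1$ bound to a uniform bound on a kernel $K(\xi,\xi')$.

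First, from $|\xi'|^2+|\xi_*'|^2 = |\xi|^2+|\xi_*|^2$ one has the algebraic identity $M_r(\xi')M_r(\xi_*') = M_r(\xi)M_r(\xi_*)$. Substituting into the definition of $Q^+$ and dividing by $M_r(\xi)$ gives
$$\frac{Q^+(g,M_r)(\xi)}{M_r(\xi)} = \int_{\mathbb{R}^n}\int_{S^{n-1}} \frac{g(\xi')}{M_r(\xi')}\, M_r(\xi_*)\, |\xi-\xi_*|^{\alpha}\, h(\hat u\cdot\sigma)\, d\sigma\, d\xi_*,$$
with $\xi' = \xi+\tfrac12(|u|\sigma-u)$ viewed as a function of $(\xi_*,\sigma)$. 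This is the key algebraic step; without it, the Gaussian $M_r(\xi_*')$ that appears inside $Q^+(g,M_r)$ would be hard to compare to $M_r(\xi)$ pointwise.

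Next, I would apply the Carleman change of variables, trading $(\xi_*,\sigma)\in\mathbb{R}^n\times S^{n-1}$ for $(\xi',\xi_*')$, where for fixed $\xi'$ the vector $\xi_*'$ ranges over the $(n-1)$-dimensional hyperplane $\Pi_{\xi,\xi'}$ through $\xi$ orthogonal to $\xi'-\xi$. This recasts the right-hand side as
$$\int_{\mathbb{R}^n}\frac{g(\xi')}{M_r(\xi')}\, K(\xi,\xi')\, d\xi',$$
where $K(\xi,\xi')$ is the integral over $\Pi_{\xi,\xi'}$ of $M_r(\xi_*)|\xi-\xi_*|^{\alpha} h(\hat u\cdot\sigma)$ against the Jacobian produced by Carleman. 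Thus it suffices to prove $\sup_{\xi,\xi'}K(\xi,\xi') \leq K$ with $K$ depending only on $\alpha$ and $r$; the announced inequality then follows by Fubini and the definition of $\|g/M_r\|_{L^1}$.

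Finally, to bound $K$ uniformly I would parametrize $\xi_*'$ on $\Pi_{\xi,\xi'}$ by its distance to $\xi$; the quantity $\hat u\cdot\sigma$ becomes an explicit function on this hyperplane whose angular singularity of type $(1-z^2)^{-\mu/2}$ is concentrated on the $(n-2)$-sphere perpendicular to $\xi'-\xi$. Assumption $(ii)$, $h(z)(1-z^2)^{\mu/2}\leq C$ with $\mu<n-1$, guarantees that the angular integral is finite uniformly in $(\xi,\xi')$, while $M_r(\xi_*)$ provides Gaussian decay in the radial direction that dominates the polynomial factor $|\xi-\xi_*|^{\alpha}$, yielding a constant depending only on $\alpha$, $r$, and $C$. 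The main obstacle is exactly this last step: tracking the angular singularity of $h$ through the Carleman change of variables in arbitrary dimension $n$ and checking that the $(n-2)$-sphere integral is finite \emph{uniformly} in both base points $\xi$ and $\xi'$. The sharp exponent $\mu<n-1$ is tight for this purpose, and is precisely what makes the generalized Grad cutoff of assumption $(ii)$ compatible with the $L^\infty$--$L^1$ estimate.
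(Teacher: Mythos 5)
Your first two steps are sound and are indeed the route the paper points to: Theorem~\ref{T5} is not proved here but quoted from \cite[Lemma 12]{Gamba}, and the factorization $M_{r}(\xi')M_{r}(\xi'_{*})=M_{r}(\xi)M_{r}(\xi_{*})$ followed by the Carleman change of variables is the natural starting point of that proof. The genuine gap is your final claim that $\sup_{\xi,\xi'}K(\xi,\xi')<\infty$. Write $d=|\xi-\xi'|$ and $\rho=|\xi-\xi'_{*}|$ for $\xi'_{*}\in\Pi_{\xi,\xi'}$; the Carleman Jacobian contributes the factor $2^{n-1}d^{-1}|u|^{-(n-2)}$ with $|u|^{2}=d^{2}+\rho^{2}$ and $\hat u\cdot\sigma=(\rho^{2}-d^{2})/(\rho^{2}+d^{2})$. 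Already for three-dimensional hard spheres ($\alpha=1$, $h$ constant) one gets that $K(\xi,\xi')$ is a constant multiple of $d^{-1}\int_{\Pi_{\xi,\xi'}}M_{r}(\xi'+\xi'_{*}-\xi)\,d\pi(\xi'_{*})$, and the Gaussian hyperplane integral does not vanish as $\xi'\to\xi$, so $K(\xi,\xi')$ blows up like $|\xi-\xi'|^{-1}$ on the diagonal. Equivalently, testing with a unit-mass bump $g$ of radius $\epsilon$ centered at the origin one checks directly from the definition of $Q^{+}$ that $Q^{+}(g,M_{r})(0)$ is of order $1/\epsilon$ while $\|g/M_{r}\|_{L^{1}}$ stays bounded; and since an estimate valid for every measurable $g\geq 0$ would conversely force the kernel to be essentially bounded, the failure is not an artifact of how one estimates $K$ --- the reduction ``it suffices that $\sup K<\infty$'' replaces the theorem by a false statement, and the diagonal region $\xi'\approx\xi$ must be handled with more than the $L^{1}$ norm of $g/M_{r}$. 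This is exactly where the ``clever manipulations'' of \cite[Lemma 12]{Gamba} live (and, in this paper, the estimate is only ever applied to functions that also carry pointwise Gaussian control), so your sketch stops precisely where the real work begins.

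A second, independent failure concerns the angular weight, whose geometry you misplace. On $\Pi_{\xi,\xi'}$ the singularity of $h$ at $z=-1$ sits at the single point $\xi'_{*}=\xi$ (that is, $\rho\to 0$), not on an $(n-2)$-sphere, and there $\mu<n-1$ gives exactly the local integrability $\rho^{\,n-2-\mu}d\rho$; the singularity at $z=+1$ sits instead at $\rho\to\infty$, and whether the Gaussian controls it depends on where $M_{r}(\xi_{*})$, $\xi_{*}=\xi'+\xi'_{*}-\xi$, is centered on the translated hyperplane, namely at its point closest to the origin, which is far from $\xi'$ when $|\xi'|$ is large. Taking $|\xi-\xi'|=1$, $\xi'-\xi\perp\xi'$, $|\xi'|=R$ and $h(z)=c(1-z^{2})^{-\mu/2}$ (admissible under \textit{(i)}--\textit{(ii)}), the Gaussian mass sits at $\rho\approx R$, where $h\approx c'R^{\mu}$ and $|u|^{\alpha-(n-2)}\approx R^{\alpha-n+2}$, so $K(\xi,\xi')\geq c''R^{\alpha+\mu+2-n}$, which is unbounded whenever $\mu>n-2-\alpha$, a range always allowed by \textit{(ii)}. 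Hence $\mu<n-1$ is not ``tight for this purpose'': it governs only the $z\to-1$ side of the hyperplane integral, while the $z\to+1$ side and the $d^{-1}$ prefactor are the actual obstructions. A correct argument has to keep the $\xi'$-integration, the Gaussian weights and the Jacobian together (or invoke further information on $g$) rather than pass to $\sup_{\xi,\xi'}K(\xi,\xi')$.
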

As usual in the $L^{\infty}$ bounds for $Q^{+}(f,f)$, this result is
a direct application of the Carleman representation formula and
clever manipulations of it.  This Theorem is very helpful when we
try to prove an $L^{\infty}$ bound for the derivatives of $f$.  The
proof of Theorem~\ref{T5} can be found on \cite[Lemma
12]{Gamba}.\\
It is clear that same result holds for
$Q^{+}(M_{r},g)$, moreover, and slightly modification of the proof
can be used to obtain the following Theorem.
\begin{Theorem}\label{T6}
Assume $B(u,\sigma)=|u|^{\alpha}h(\hat{u}\cdot\sigma)$ with
$h(\cdot)$ satisfying the conditions stated in the introduction,
then for any measurable function $g\geq0$
\begin{displaymath}
\left\|\frac{Q^{+}(g,(1+|\xi|^{2})^{s}M_{r})}{(1+|\xi|^{2})^{s}M_{r}}\right\|_{L^{\infty}}\leq
K\left\|\frac{g}{M_{r}}\right\|_{L^{1}},
\end{displaymath}
for any $s>0$ and some positive constant $K$ depending on $s$, $\alpha$ and $\beta$.
\end{Theorem}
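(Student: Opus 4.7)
The plan is to reduce Theorem~\ref{T6} to Theorem~\ref{T5} by absorbing the polynomial weight $(1+|\xi|^2)^s$ into a Maxwellian with a slightly smaller rate, using energy conservation twice. First I would start from the pre-collisional representation
\begin{displaymath}
Q^{+}(g,(1+|\cdot|^{2})^{s}M_{r})(\xi)=\int_{\mathbb{R}^{n}}\int_{S^{n-1}}g(\xi')\,(1+|\xi'_{*}|^{2})^{s}\,M_{r}(\xi'_{*})\,B(\xi-\xi_{*},\sigma)\,d\sigma\,d\xi_{*},
\end{displaymath}
divide both sides by $(1+|\xi|^{2})^{s}M_{r}(\xi)$, and use the elastic-collision identity $M_{r}(\xi)M_{r}(\xi_{*})=M_{r}(\xi')M_{r}(\xi'_{*})$ in the form $M_{r}(\xi'_{*})/M_{r}(\xi)=M_{r}(\xi_{*})/M_{r}(\xi')$ to get
\begin{displaymath}
\frac{Q^{+}(g,(1+|\cdot|^{2})^{s}M_{r})(\xi)}{(1+|\xi|^{2})^{s}M_{r}(\xi)}=\int\int\frac{g(\xi')}{M_{r}(\xi')}\frac{(1+|\xi'_{*}|^{2})^{s}}{(1+|\xi|^{2})^{s}}M_{r}(\xi_{*})\,B\,d\sigma\,d\xi_{*}.
\end{displaymath}

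Next I would eliminate the ratio of polynomial weights: since energy conservation gives $|\xi'_{*}|^{2}\le|\xi|^{2}+|\xi_{*}|^{2}$, the elementary inequality $(1+a+b)^{s}\le(1+a)^{s}(1+b)^{s}$ yields $(1+|\xi'_{*}|^{2})^{s}/(1+|\xi|^{2})^{s}\le(1+|\xi_{*}|^{2})^{s}$. Then I pick any $r'\in(0,r)$ and absorb the resulting polynomial into a slightly weaker Gaussian, namely $(1+|\xi_{*}|^{2})^{s}M_{r}(\xi_{*})\le C_{s,r,r'}M_{r'}(\xi_{*})$. This reduces the problem to controlling
\begin{displaymath}
C_{s,r,r'}\int\int\frac{g(\xi')}{M_{r}(\xi')}M_{r'}(\xi_{*})B(\xi-\xi_{*},\sigma)\,d\sigma\,d\xi_{*}.
\end{displaymath}

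For the last step I would apply energy conservation a second time, now at rate $r'$, writing $M_{r'}(\xi_{*})=M_{r'}(\xi')M_{r'}(\xi'_{*})/M_{r'}(\xi)$. The integral then collapses into a standard gain operator:
\begin{displaymath}
C_{s,r,r'}\int\int\frac{g(\xi')}{M_{r}(\xi')}M_{r'}(\xi_{*})B\,d\sigma\,d\xi_{*}=\frac{C_{s,r,r'}}{M_{r'}(\xi)}\,Q^{+}\!\Bigl(\tfrac{g\,M_{r'}}{M_{r}},\,M_{r'}\Bigr)(\xi).
\end{displaymath}
Theorem~\ref{T5}, applied with parameter $r'$ and with $g$ there replaced by $g\,M_{r'}/M_{r}$, bounds this by $C_{s,r,r'}K(r')\,\|gM_{r'}/(M_{r}M_{r'})\|_{L^{1}}=C_{s,r,r'}K(r')\,\|g/M_{r}\|_{L^{1}}$, which is exactly the claim.

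The entire proof is a chain of algebraic manipulations around the Maxwellian-preservation identity and the sub-multiplicative bound on polynomial weights; the hard analytic work (the Carleman-representation argument that gives Theorem~\ref{T5}) has already been done in \cite{Gamba}. So the only ``obstacle'' is really the idea of invoking Theorem~\ref{T5} at the slightly smaller rate $r'<r$, which is precisely the ``slight modification'' referred to in the statement — the cost is merely the constant $C_{s,r,r'}$ that blows up as $r'\nearrow r$, but $r'$ is free to be chosen once and for all.
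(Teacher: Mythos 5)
Your argument is correct: each step (the weak form of $Q^{+}$, the identity $M_{r}(\xi')M_{r}(\xi'_{*})=M_{r}(\xi)M_{r}(\xi_{*})$, the bound $(1+|\xi'_{*}|^{2})^{s}\leq(1+|\xi|^{2})^{s}(1+|\xi_{*}|^{2})^{s}$ from $|\xi'_{*}|^{2}\leq|\xi|^{2}+|\xi_{*}|^{2}$, the absorption $(1+|\xi_{*}|^{2})^{s}M_{r}(\xi_{*})\leq C_{s,r,r'}M_{r'}(\xi_{*})$, and the re-assembly into $Q^{+}(gM_{r'}/M_{r},M_{r'})/M_{r'}$) checks out, and Theorem~\ref{T5} applies to $gM_{r'}/M_{r}$ since it only requires a nonnegative measurable first argument, so the growing factor $M_{r'}/M_{r}$ causes no trouble and $\left\|gM_{r'}/(M_{r}M_{r'})\right\|_{L^{1}}=\left\|g/M_{r}\right\|_{L^{1}}$ gives exactly the claimed bound with constant $C_{s,r,r'}K(\alpha,r')$, depending only on $s$, $\alpha$, $r$ once $r'$ is fixed (say $r'=r/2$). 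This is, however, a genuinely different route from the paper's: the paper offers no self-contained proof of Theorem~\ref{T6}, stating only that a ``slight modification'' of the proof of Theorem~\ref{T5} (the Carleman-representation argument of Lemma 12 in \cite{Gamba}) yields it, i.e.\ one is expected to re-run the pointwise Carleman estimates with the weight $(1+|\xi|^{2})^{s}M_{r}$ in place of $M_{r}$. Your proof instead uses Theorem~\ref{T5} as a black box, which buys simplicity and robustness (nothing about the Carleman representation or the angular kernel needs to be revisited; the weight $(1+|\xi|^{2})^{s}$ could be replaced by any sub-multiplicative weight dominated by $e^{(r-r')|\xi|^{2}}$), at the only cost of a constant that blows up as $r'\nearrow r$ --- harmless here since the Maxwellian rate $r$ appearing in the statement is left untouched, whereas the paper's intended direct modification keeps the argument parallel to \cite{Gamba} and yields the constant without introducing the intermediate rate $r'$.
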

Finally, a powerful result proved by Mouhot and Villani
\cite[Theorem 4.2]{Mouhot} is also used.  This result helps to
obtain uniform bounds for infinite time for the derivative's
moments.  A small piece of this theorem, which is the one of use for
us, is stated below.
\begin{Theorem}\label{T7} Let $\alpha\in(0,2)$,
$s\in\mathbb{N}$ and assume that $f_{0}\in L^{1}_{2}\cap
H^{s}_{(s-1)(1+\alpha/2)}$.  Then for any $t_{0}>0$ and $k>0$,
\begin{displaymath}
\sup_{\;t\geq t_{0}}\left\|f(t,\cdot)\right\|_{H^{s}_{k}}<+\infty.
\end{displaymath}
This quantity depends on an upper bound on $L^{1}_{2}$ and
$H^{s}_{(s-1)(1+\gamma/2)}$ norms of $f_{0}$ and a lower bound on
$t_{0}$.
\end{Theorem}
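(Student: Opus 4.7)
The plan is to prove Theorem~\ref{T7} by a double induction on $(s,k)$, using weighted $L^{2}$ energy estimates and exploiting the coercivity of $L(f)$ provided by the Maxwellian lower bound of Theorem~\ref{T4}(3). Specifically, for $t\geq t_{0}/2$ one has $L(f)(v)\geq c_{0}(1+|v|^{2})^{\alpha/2}$ with $c_{0}>0$ uniform in $t$, and this pointwise lower bound supplies the dissipative term that will dominate the right-hand sides below.

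For the base case $s=0$, I would first recall that all polynomial $L^{1}$-moments of $f$ are generated and uniformly bounded on $[t_{0}/2,\infty)$ (classical moment-propagation arguments, as in Theorem~\ref{T1} with $|\eta|=0$). The standard $L^{2}$ energy identity
\[
\tfrac{1}{2}\tfrac{d}{dt}\|f\|_{L^{2}_{k}}^{2}+\int f^{2}\,L(f)\,(1+|v|^{2})^{k}\,dv=\int Q^{+}(f,f)\,f\,(1+|v|^{2})^{k}\,dv,
\]
combined with a Gustafsson-type bilinear bound $\|Q^{+}(f,f)\|_{L^{2}_{k}}\leq C\|f\|_{L^{1}_{k+\alpha}}\|f\|_{L^{2}_{k}}$ and Cauchy--Schwarz, produces a dissipative differential inequality of the type handled by Lemma~\ref{L9}, yielding $\sup_{t\geq t_{0}}\|f\|_{L^{2}_{k}}<\infty$ for every $k>0$.

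For the inductive step, fix a multi-index $\eta$ with $|\eta|=s$ and assume $\sup_{t\geq t_{0}/2}\|f\|_{H^{s-1}_{K}}<\infty$ for every $K>0$. Using Lemma~\ref{L1}, rearrange the $\eta$-differentiated equation as
\[
\partial_{t}(\partial^{\eta}f)+(\partial^{\eta}f)\,L(f)=Q^{+}(\partial^{\eta}f,f)+Q^{+}(f,\partial^{\eta}f)-f\,L(\partial^{\eta}f)+\sum_{0<\nu<\eta}\binom{\eta}{\nu}Q(\partial^{\nu}f,\partial^{\eta-\nu}f),
\]
multiply by $(\partial^{\eta}f)(1+|v|^{2})^{k}$ and integrate; the left-hand side yields the dissipative term $c_{0}\|\partial^{\eta}f\|_{L^{2}_{k+\alpha/2}}^{2}$. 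The right-hand side is controlled by three ingredients: (a) weighted bilinear bounds for $Q^{+}$ in the spirit of Theorems~\ref{T5}--\ref{T6}, which (after Cauchy--Schwarz) bound the diagonal gain contributions by a small fraction of the dissipation plus $C\|\partial^{\eta}f\|_{L^{2}_{k}}^{2}$ times a uniformly-bounded polynomial moment of $f$; (b) Cauchy--Schwarz together with uniform moment bounds for the cross loss term $\int f\,L(\partial^{\eta}f)\,\partial^{\eta}f\,(1+|v|^{2})^{k}\,dv$; and (c) the induction hypothesis to control the middle bilinear terms $Q(\partial^{\nu}f,\partial^{\eta-\nu}f)$ with $0<\nu<\eta$ uniformly in time, since each factor carries at most $s-1$ derivatives. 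Absorbing small portions of the gain contributions into the dissipation produces
\[
\frac{d}{dt}\|\partial^{\eta}f\|_{L^{2}_{k}}^{2}+c\,\|\partial^{\eta}f\|_{L^{2}_{k+\alpha/2}}^{2}\leq C_{1}\|\partial^{\eta}f\|_{L^{2}_{k}}^{2}+C_{2},
\]
uniformly in $t\geq t_{0}/2$.

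To close the estimate, I would first run the $s$-induction at the minimal admissible weight $k_{s}=(s-1)(1+\alpha/2)$ for which the initial datum is available, and then bootstrap in $k$: once $\sup_{t}\|\partial^{\eta}f\|_{L^{2}_{k_{s}}}$ is known, the elementary splitting $\|\partial^{\eta}f\|_{L^{2}_{k}}^{2}\leq(1+R^{2})^{k-k_{s}}\|\partial^{\eta}f\|_{L^{2}_{k_{s}}}^{2}+(1+R^{2})^{-\alpha/2}\|\partial^{\eta}f\|_{L^{2}_{k+\alpha/2}}^{2}$ (for $R$ large) permits absorption of the $C_{1}\|\partial^{\eta}f\|_{L^{2}_{k}}^{2}$ term into the dissipation, and Lemma~\ref{L9} delivers the uniform bound at weight $k$. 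The main obstacle is twofold: first, proving the weighted bilinear estimates in (a) with sufficiently tight constants (this rests on a careful use of the Carleman representation, generalizing the proofs of Theorems~\ref{T5}--\ref{T6}); and second, organizing the $(s,k)$-bookkeeping so that the hypothesis $f_{0}\in H^{s}_{(s-1)(1+\alpha/2)}$ is exactly what is needed to launch the induction at each derivative level, since each differentiation consumes a weight budget of $1+\alpha/2$ in the bilinear estimates which must then be regenerated from the dissipation after positive time---this is precisely the phenomenon that forces $t_{0}>0$ in the conclusion.
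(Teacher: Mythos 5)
You should note first that the paper does not prove Theorem~\ref{T7} at all: it is imported verbatim from Mouhot--Villani (Theorem 4.2 of \cite{Mouhot}), and the paper only remarks that the proof ``requires several previous results on the control of the positive collision operator including the gain of regularity of the positive operator'' and proceeds by a stable differential inequality for the $H^{s}$ norm plus induction. Your outline (energy estimates in $L^{2}_{k}$, coercivity from the lower bound $L(f)\geq c_{0}(1+|v|^{2})^{\alpha/2}$ of Theorem~\ref{T4}(3), induction on $s$, bootstrap in $k$) matches that spirit, but it skips precisely the steps that constitute the actual content of the cited result, so as a proof it has genuine gaps.

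The main gap is item (a): the claim that the top-order gain contributions $Q^{+}(\partial^{\eta}f,f)+Q^{+}(f,\partial^{\eta}f)$ can be bounded by ``a small fraction of the dissipation plus $C\|\partial^{\eta}f\|_{L^{2}_{k}}^{2}$'' is asserted, not proved, and it does not follow from Theorems~\ref{T5}--\ref{T6}, which control $Q^{+}(g,M_{r})$ against a \emph{fixed Maxwellian} in $L^{\infty}$ and are useless when the second slot carries $f$ or $\partial^{\eta}f$. The crude convolution-type bound $\|Q^{+}(g,h)\|_{L^{2}_{k}}\leq C\|g\|_{L^{1}_{k+\alpha}}\|h\|_{L^{2}_{k+\alpha}}$ produces, after Cauchy--Schwarz, a term of size $C\|f\|_{L^{1}_{k+\alpha}}\|\partial^{\eta}f\|_{L^{2}_{k+\alpha/2}}^{2}$ whose constant is in no way small compared with the coercivity constant $c_{0}$, so it cannot be absorbed; making that constant small is exactly where Mouhot--Villani invoke the regularizing/weight-distributing structure of $Q^{+}$ (their Theorem 2.1 and the decomposition of the angular kernel), i.e.\ the ``gain of regularity'' the paper explicitly flags. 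Secondly, your item (b) is not a lower-order term: $\int f\,L(\partial^{\eta}f)\,\partial^{\eta}f\,(1+|v|^{2})^{k}\,dv$ is quadratic in the top derivative and is naturally estimated through $\|\partial^{\eta}f\|_{L^{1}_{\alpha}}$, which is controlled neither by the $H^{s-1}$ induction hypothesis nor by $\|\partial^{\eta}f\|_{L^{2}_{k}}$ at the minimal weight $k_{s}=(s-1)(1+\alpha/2)$ (an $L^{1}\!\to\!L^{2}$ embedding costs a weight of order $n/2$), so ``Cauchy--Schwarz together with uniform moment bounds'' does not dispose of it. Finally, the bootstrap in $k$ as written presupposes that $\|\partial^{\eta}f\|_{L^{2}_{k}}$ is finite at the starting time, which fails for $k>k_{s}$ at $t=0$; the appearance of arbitrary weights for $t\geq t_{0}$ requires the standard iteration in which the time-integrated dissipation $\int\|\partial^{\eta}f\|_{L^{2}_{k_{s}+\alpha/2}}^{2}\,dt<\infty$ furnishes a positive intermediate time at which the stronger norm is finite, gaining $\alpha/2$ per step --- you gesture at this phenomenon but do not carry it out. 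In short, the skeleton is the right one, but the three places where you wave your hands are exactly the technical core of Mouhot--Villani's theorem.
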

The proof of this Theorem is rather technical
and requires several previous results on the control of the positive
collision operator including the gain of regularity of the positive
operator, however its spirit is, as in this work, to find a stable
differential equation for the $H^{s}$ norm of $f$ and proceed by
induction.

\Ack{This research author was partially supported by NSF under grant
DMS-0507038.  Support from the Institute from Computational
Engineering and Sciences at the University of Texas at Austin is
also gratefully acknowledged.}

%

\signra
\signig
%
\end{document}